\theoremstyle{plain}
\newtheorem*{assumptionA}{Assumption A}
\newtheorem{theorem}{Theorem}[section]
\newtheorem{definition}[theorem]{Definition}
\newtheorem{corollary}[theorem]{Corollary}
\newtheorem{lemma}[theorem]{Lemma}
\newtheorem{proposition}[theorem]{Proposition}
\newtheorem{example}[theorem]{Example}
\newtheorem{preremark}[theorem]{Remark}
\newenvironment{remark}{\begin{preremark}\normalfont}{\end{preremark}}
\renewcommand{\l}{\mathcal L}
\renewcommand{\r}{\mathcal R}
\newcommand{\e}{\mathcal E}
\newcommand{\F}{\mathcal F}
\newcommand{\R}{\mathbb R}
\title{The Dirichlet heat kernel in inner uniform domains: local results, 
compact domains and non-symmetric forms}
\author{Janna Lierl \and Laurent Saloff-Coste \thanks{Both authors partially supported by NSF Grant DMS 1004771}}
\begin{document}
\maketitle

\begin{abstract}
This paper provides sharp Dirichlet heat kernel estimates in inner uniform 
domains, including bounded inner uniform domains, 
in the context of certain (possibly non-symmetric) bilinear forms 
resembling Dirichlet forms.
For instance, the results apply to the Dirichlet heat kernel associated with 
a uniformly elliptic divergence form operator with symmetric second order part
and bounded measurable real coefficients 
in inner uniform domains in 
$\mathbb  R^n$. The results are applicable to any convex domain, to the complement of any convex domain, and 
to more exotic examples such as the interior and exterior of the snowflake.  
\end{abstract}

\noindent {\bf AMS subject classification}: 31C56,35K20,58J35,58J65,60J45,60J60\\

\noindent {\bf Keywords}: heat equation, heat kernel, Dirichlet condition, inner uniform domains, Harnack inequality, ultracontractivity.

\section{Introduction}

This paper is concerned with Dirichlet heat kernel estimates for diffusions
in inner uniform domains. 
The monograph \cite{GyryaSC} introduced a general approach 
to this problem in the case of unbounded domains in  strongly local 
Dirichlet spaces 
satisfying a global parabolic Harnack inequality. 
Sharp estimates for 
the heat kernel and the heat semigroup with Dirichlet boundary condition in 
domains have been studied by many authors. The article \cite{DavSim} 
contains seminal ideas.  Varopoulos' work \cite{VarLip1,VarLip2} contains 
definitive results for domains above the graph of a Lipschitz function. We refer 
the reader to \cite{GSC02,KimSong,Ouhbook,OuhW,Song1} for related results and 
further pointers to the literature. The main difference between these earlier 
works and the present effort is twofold. First, as in \cite{GyryaSC}, 
our results cover inner uniform domains, a class of domains that is 
significantly larger than, say, Lipschitz domains. Further, inner uniformity
is an intrinsic notion that can be used in rather general metric spaces. 
This allows us to develop our results in the context of a large class of 
local Dirichlet spaces. This larger context allows us to cover many natural 
and interesting examples beyond elliptic operators in $\mathbb R^n$, 
for instance, sub-elliptic operators.

This paper 
complements the results of \cite{GyryaSC} in several significant ways.
For this purpose, we rely heavily  on
key results contained in the companion papers \cite{LierlSC1,LierlSC2}
that were developed with the applications given here in mind. 

First, we treat 
the case of bounded inner uniform domains which is not covered by 
\cite{GyryaSC}. 
In the unbounded case, a
Doob's transform is used which involves the ``harmonic profile'' $h_U$ of 
the domain $U$, 
that is, a harmonic positive function in $U$ that vanishes on the boundary 
(in the proper sense).  In the case of bounded inner uniform domains, $h_U$ 
must be replaced by the positive eigenfunction $\phi_U$ associated with 
the lowest Dirichlet eigenvalue $\lambda_U$ of the domain $U$. This requires 
significant adaptation of the arguments. 

Second, whether the domain is bounded or not, we include 
a wide class of non-symmetric second order differential operators.
In the case of a fixed bounded inner uniform domain, there is not much  
difference in the final results between the symmetric and non symmetric cases. 
In the case of unbounded domains, the presence of lower order terms forces the
estimates to be local in time (in a certain sense).

Third, in both  the symmetric and  non-symmetric cases, we relax 
the very global assumptions made in \cite{GyryaSC} to cover cases where the 
geometry of the underlying space is only  controlled locally. In particular, 
we cover domains that are inner uniform only in a certain local sense. 
For instance, we treat the Dirichlet heat kernel for the 
Laplace-Beltrami operator in an unbounded inner uniform domain in a 
complete Riemannian manifold, without global curvature assumption, or
under the Ricci curvature assumption  $\mbox{Ric} \ge -\kappa g$, for some 
$\kappa>0$.  We also obtain some local estimates for the 
Dirichlet heat kernel in the interior of an unbounded convex set 
in $\mathbb R^n$.  Most unbounded convex sets are not inner uniform 
but they are always locally inner uniform (in fact, locally uniform).
 
We will work in a rather abstract setting involving the notion of
(not necessarily symmetric) Dirichlet forms and the associated intrinsic 
distance. This setting is actually very natural for this problem because, 
even when treating domains in $\mathbb R^n$, the technique we use requires 
the introduction of some auxiliary abstract Dirichlet spaces in which most of 
the work is done.  Regarding the general theory of Dirichlet spaces, we refer the reader to \cite{CF12,FOT94} and also \cite{MR92,MaRock}.
Nevertheless, in the rest of this introduction, 
we illustrate the main results of this paper in  the context of certain
elliptic operators on a complete Riemannian manifold.  

\subsection{Illustrative examples}
Let $(M,g)$ be a complete Riemannian manifold equipped with its Riemannian 
measure $\mu$ and its Riemannian distance function. Let $U$ be an inner uniform 
domain in $M$  (for instance, if $M=\mathbb R^n$, 
bounded convex domains are inner uniform 
and the complement of any convex domain is inner uniform). Let $L$ be a second 
order differential operator on $M$ of the form
$$ L= \Delta  +X  +V$$  
where $\Delta$ is the Laplace-Beltrami operator on $M$, 
$X$ is a smooth vector field on $M$ 
(viewed as a differential operator acting on smooth functions 
$X: f\mapsto Xf=df(X)$) 
and $V$ is a smooth function on $M$ (viewed as a multiplication operator). 
This particular structure of the differential operator $L$ is chosen here 
for convenience and illustrative purpose.  Given a domain $U$ in $M$, 
let $d_U$ be the inner distance in $U$ (see Section \ref{sec-innermetric} below).

Suppose that $M$ has non-negative Ricci curvature and $X=0, V=0$. Suppose 
also that $U$ is unbounded. Then \cite{GyryaSC} provides a global space-time
two-sided estimate of the Dirichlet heat kernel $h^D_U(t,x,y)$ of the form
$$C\frac{h_U(x)h_U(y)}{
\sqrt{V(x,\sqrt{t})V(y,\sqrt{t})} h_U(x_{\sqrt{t}})h_U(y_{\sqrt{t}})}
\exp\left(-c \frac{d_U(x,y)^2}{t}\right).$$ 
In this two-sided estimate, different constants $C,c\in (0,\infty)$ 
are used in in the lower and upper bounds. The function $h_U$ is any fixed 
positive solution of $Lh=0$ in $U$ which 
vanishes at the boundary (in the proper weak sense). 
We call this function a harmonic profile for $U$. For any $x\in U$ 
and $r>0$, $x_r$ denotes a point in $U$ with the property that 
$d(x,x_r) \le  Ar $ and $d(\partial U, x_r)\ge a r$ where $a,A$ are independent 
of $x$ and $r$. The inner uniformity of $U$ 
ensures that there exists constants $a,A$  such that such 
a point $x_r$ exists for every $x\in U$ and $r>0$.

The aim of this paper is to prove the theorems of the following type.
See Theorem \ref{th-bounded1} and Corollary \ref{cor-bounded1}.
\begin{theorem} Let $(M,g)$ be a complete Riemannian manifold 
with Riemannian measure $\mu$. Let $L=\Delta+X+V$ be as described above.
Let $U$ be a bounded inner uniform domain in $M$. 
Let $A=A(U),a=a(U)$ 
be constants such that for any point $x$ in $U$ and any $r>0$,
there exists a point $x_r$ in $U$ at distance at most $A\min\{r,1\}$ 
from $x$ and at distance at least $a\min\{r,1\}$ from the boundary of $U$.
Let $\phi_{\mbox{\tiny s}}$ 
(resp. $\phi$) be the unique positive eigenfunction associated with the lowest 
Dirichlet eigenvalue of $-\Delta$ (resp. $-L$) in $U$. 
\begin{itemize}
\item  There are constants $C=C(L,U),c=c(L,U)$ such that
$ c\phi_{\mbox{\tiny s}}
\le   \phi  \le C \phi_{\mbox{\tiny s}}$  in $U$.
\item  There are constants $C=C(L,U)$ and $\alpha=\alpha(L,U)$ such that,
for any solution $\psi$ of $L\psi= \lambda_\psi \psi$ in $U$ with Dirichlet
boundary condition,  we have  $|\psi|\le C(1+|\lambda_\psi|)^\alpha \phi$.  
\item For any fixed $T>0$, there are constant $c_i=c_i(L,U,T)\in (0,\infty)$ 
such that the Dirichlet heat kernel $p^{D}_{U}(t,x,y)$ for $L$ in $U$ 
with respect to $\mu$ satisfies
$$p^{D}_{U}(t,x,y)\le
\frac{ c_1 \phi(x)\phi(y)}{ 
\sqrt{V(x,\sqrt{t})V(y,\sqrt{t})} \phi(x_{\sqrt{t}})\phi(y_{\sqrt{t}})}
\exp\left(-c_2 \frac{d_U(x,y)^2}{t}\right)$$ 
and
$$p^{D}_{U}(t,x,y)\ge
\frac{ c_3 \phi(x)\phi(y)}{ 
\sqrt{V(x,\sqrt{t})V(y,\sqrt{t})} \phi(x_{\sqrt{t}})\phi(y_{\sqrt{t}})}
\exp\left(-c_4 \frac{d_U(x,y)^2}{t}\right),$$ 
for all  $(t,x,y)\in (0,T)\times U\times U$.
\end{itemize}    
\end{theorem}
To our knowledge, this theorem is new even when $M=\mathbb R^n$ and 
$L=\Delta$ is the Laplacian. Indeed, \cite{GyryaSC} does not treat bounded 
domains and, even in this special case, the above statement is more precise 
than the known intrinsic ultracontractivity results. 
Section \ref{sec-conv} gives more detailed results in a 
more general context and include complementary asymptotics when 
$t$ tends to infinity.  In particular, Corollary \ref{cor-bounded1} gives a
refined eigenfunction estimate.

\begin{figure}[h]
\caption{A polygonal domain $\Omega$ with a slit}
\begin{center}\label{fig1}

\begin{picture}(300,90)(-50,30)\thicklines
\put(100,50){\line(2,-1){50}}
\put(150,25){\line(0,1){45}}
\put(80,70){\line(0,1){50}}
\put(80,120){\line(1,0){30}}
\put(80,70){\line(-3,1){45}}
\put(100,70){\line(1,0){50}}
\put(100,70){\line(1,5){10}}
\put(100,50){\line(1,-1){40}}
\put(140,10){\line(-1,0){80}}
\put(60,10){\line(-1,3){25}}
\put(50,40){\line(1,0){30}}
\put(90,60){\makebox(0,0){$\Omega$}}
\end{picture}
\end{center}
\end{figure}
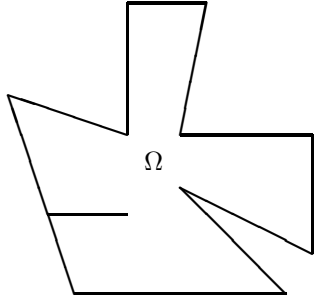

\vspace{.1in}

For very concrete examples, the reader can think of a bounded 
polygonal domain $\Omega$ in 
$\mathbb R^n$ as in Figure \ref{fig1}. In this context, we can consider 
the heat equation with Dirichlet boundary condition for the divergence 
form operator
$$Lf=\sum \partial_i^2 f +\sum b_i\partial _i f+ \sum \partial_i(d_i f)+cf$$
where $b_i,d_i,c$ are bounded measurable functions. Let $\phi$ be the positive 
eigenfunction associated with the lowest Dirichlet 
eigenvalue of $-L$ in $\Omega$. Let $\phi_{\mbox{\tiny s}}$ be the positive
eigenfunction associated with the lowest Dirichlet 
eigenvalue of $-\sum \partial_i^2$ in $\Omega$. We show that 
$\phi \simeq \phi_{\mbox{\tiny s}}$
in $\Omega$. The function $\phi_{\mbox{\tiny s}}$
vanishes at different rates 
as $x$ tends non-tangentially to different boundary points. The rate 
depends on the angle at the boundary point. For instance,
$\phi$ will vanish linearly  at smooth boundary points and will vanish 
quadratically  when approaching the vertex of an 
interior right angle. The polygonal domain $\Omega$ may have a vertex 
with interior angle of  $2\pi$ in which case the corresponding vertex is 
the tip of a slit. At such a vertex, $\phi$ vanishes as the square root of 
the distance to the boundary.  The heat kernel estimates stated above 
capture this in a very precise way by reducing the estimates of the 
Dirichlet heat kernel to the understanding of the eigenfunction $\phi$ 
(equivalently, $\phi_{\mbox{\tiny s}}$). 
The case of the Koch snowflake is another  good example to keep in mind.

An important special case of the results obtained in this paper
arises when the manifold $M$ 
has non-negative 
Ricci curvature (hence satisfies the parabolic Harnack inequality at all scales)
and $L=\Delta$. In this  case, the results described above hold true 
uniformly over the class of all inner uniform domains with  specified
inner uniformity constants a stated in the following theorem.

\begin{theorem} Let $(M,g)$ be a complete non-compact Riemannian manifold with 
non-negative Ricci curvature. Fix constants $0<c_u<1<C_u<\infty$ and 
let $U$ be a bounded $(c_u,C_u)$-inner uniform domain in $M$ (see Definition
\ref{def-IU}). 
Let $\mbox{\em diam}_U$ be the inner diameter of $U$. Let $\lambda_U$
the the lowest eigenvalue of minus the Laplacian with Dirichlet boundary 
condition in $U$, and let  $\phi$ be the associated positive eigenfunction normalized in $L^2(U,\mu)$.
For any $x\in U$, let $x_r\in U$ be such that $d_U(x,x_r)\le r$ 
and $d(x_r,\partial_U)\ge 2^{-5}c_u\min\{r,
\mbox{\em diam}_U\}$. 
Let $p^{D}_{U}(t,x,y)$ be the Dirichlet heat kernel in $U$.
There are constants $c_i\in (0,\infty)$ depending only on $M$ and $c_u,C_u$
such that 
\begin{itemize}
\item The Dirichlet heat kernel satisfies
$$p^{D}_{U}(t,x,y)\le
\frac{ c_1e^{-t\lambda_U}\phi(x)\phi(y)}{ 
\sqrt{V(x,\sqrt{\tau})V(y,\sqrt{\tau})} \phi(x_{\sqrt{\tau}})
\phi(y_{\sqrt{\tau}})}
\exp\left(-c_2 \frac{d_U(x,y)^2}{t}\right)$$ 
and
$$p^{D}_{U}(t,x,y)\ge
\frac{ c_3e^{-t\lambda_U}\phi(x)\phi(y)}{ 
\sqrt{V(x,\sqrt{\tau})V(y,\sqrt{\tau})} \phi(x_{\sqrt{\tau}})
\phi(y_{\sqrt{\tau}})}
\exp\left(-c_4 \frac{d_U(x,y)^2}{t}\right),$$ 
for all  $(t,x,y)\in (0,\infty)\times U\times U$ with $\tau=
\min\{t,\mbox{\em diam}_U^2\}$.
\item Further,  for $(t,x,y)\in (\mbox{\em diam}_U^2,\infty)\times U\times U$,
we have
$$\left|\frac{e^{t\lambda_U}p^D_U(t,x,y)}{\phi(x)\phi(y)}-1\right|\le c_5e^{-c_6t/\mbox{\small \em diam}_U^2}.$$ 
\end{itemize}
\end{theorem}

As a simple example of application of this result, let $M=\mathbb R^n$ 
be the Euclidean space. Let $\mathcal C(a,A)$ be the set of all 
convex bounded regions $U$ such $B(o,ar)\subset U\subset B(o,Ar)$ 
for some $o\in U$ and $r>0$. It is not hard to see that 
there are constants $c_u,C_u$, depending only on $a,A$, 
such that any such set is $(c_u,C_u)$-inner uniform. The above theorem applies 
uniformly to all $U\in \mathcal C(a,A)$.

The general setting in which we will work allows us to cover 
many different situations including the case when the Riemannian structure 
used above
is replaced by a sub-Riemannian structure.

\if

Consider the case of a uniformly elliptic operator in $\mathbb R^n$, i.e., 
an operator
formally given by 
$Lf=\sum_{i,j}\partial_i (a_{i,j} \partial_jf)+\sum_i (b_i\partial _i f+
\partial_i(d_if))+cf$
with bounded 
measurable real  coefficients $a_{i,j}$, $b_i,d_i,c$ with 
$\sum_{i,j}a_{i,j}(x)\xi_i\xi_j\ge \epsilon |\xi|^2$, $\epsilon>0$. 
The results of this 
paper provide good bounds for the heat kernel $p^D_U(t,x,y)$ associated with the 
parabolic equation $\partial_t f=Lf$ with Dirichlet boundary 
condition in any inner uniform domain $U$, under the hypothesis that 
the matrix $(a_{i,j})$ is symmetric at any point $x$. 
The first author will consider the case of a non-symmetric $(a_{i,j})$ 
elsewhere.
\fi 

\subsection{Organization of the paper}
In the next section, we describe  basic notation and assumptions regarding 
the underlying space $X$ and its geometry induced by a fixed strongly 
local Dirichlet.  The doubling volume property and Poincar\'e inequalities 
play a key role throughout the paper.

Section 3 contains the definition of uniform and inner uniform domains as 
well as important local quantitative version.  

Section 4 described a class of bilinear forms with dense domain in $L^2(X,\mu)$
that are adapted to the 
fix geometric structure carried by our space $X$. See Definition \ref{def-adapt}
and Assumption A. For example, 
if $X$ is a complete Riemannian manifold Riemannian measure $\mu$ and 
Dirichlet form  $\int_M \nabla f_1 \cdot \nabla f_2 d\mu$ then the bilinear form
\begin{eqnarray*}
\mathcal E(f_1,f_2) &=& \int_M \nabla f_1 \cdot \nabla f_2 d\mu\\
&&+ \int_M  (b_1\cdot \nabla f_1) f_2 d\mu + \int_M  f_1 (b_2\cdot \nabla f_2) f_2 d\mu 
+\int_M f_1f_2 Vd\mu \end{eqnarray*}  
where  $b_1,b_2$ are  bounded vector fields on $X$  and 
$V$ is a bounded potential is adapted in the sense introduced in Section 4. 

Section 5 discusses the notions of interior and boundary Harnack inequalities
and the notion of harmonic profile of a region $U$. 
The harmonic profile of an unbounded domain $U$ is a positive harmonic 
function in $U$ satisfying the Dirichlet boundary condition along 
the boundary of $U$.  A localized version of this definition is also introduced
and the existence of harmonic profiles is discussed. Results from
\cite{LierlSC1} that play an important role here are reviewed.

Section 6 provides novel variations on the notion of $h$-transform. It contains 
some of the key ingredients for the proof of our main Dirichlet heat kernel 
estimates. The main point is to understand the structure and properties  
of the form $\mathcal E_h$ obtains via $h$-transform from 
our given adapted bilinear form $\mathcal E$. 
Even if we assume that 
$\mathcal E$ is a (non-symmetric) Dirichlet form, 
the form $\mathcal E_h$ may not be a Dirichlet form.
The precise properties of $\mathcal E_h$ depend on the  particular function 
$h$ used in the $h$-transform. We show that, for well chosen $h$, the 
form $\mathcal E_h$ satisfies structural properties that imply the validity of 
a Harnack inequality (up to the boundary).  See Theorem \ref{h-VDP0} and 
Theorem \ref{h-VDP}. 
This makes use of the results of 
\cite{LierlSC2} which were developed in part for this purpose and are the 
main key to obtain the result presented here.

Section 7 contains the main results obtained in this paper. It is based in 
an essential way on the ideas and techniques described in Section 5 and 6. 
Theorems \ref{th-HK1}--\ref{th-HK3} 
provide detailed Dirichlet heat kernel estimates
covering a wide range of different hypotheses.  Theorem \ref{th-globH}
gives a global Harnack type estimate  for weak solutions
of our abstract heat equations with Dirichlet boundary condition under 
a range  of inner uniformity conditions on the domain.

\section{The underlying space and its geometry}\label{sec-geo}

\subsection{The intrinsic distance}
Let $X$ be a connected, locally compact, separable metrizable space and let $\mu$ be a 
non-negative Borel measure on $X$ that is finite on compact sets and positive 
on non-empty open sets.

We fix a symmetric, strongly local, regular Dirichlet form 
$(\e^{\mbox{\tiny{s}}}
,\F=D(\e^{\mbox{\tiny{s}}}))$ on $L^2(X,\mu)$ with 
energy measure $d\Gamma$. We sometimes call this form ```the model form''.
By this we simply mean that this form serves to define the basic 
geometry of our space and the adapted forms introduced in 
Section \ref{sec3:non-sym forms}.

Recall that $d\Gamma$ is a measure-valued quadratic 
form defined by
 \[ \int f \, d\Gamma(u,u) = \e^{\mbox{\tiny{s}}}
(uf,u) - \frac{1}{2} \e^{\mbox{\tiny{s}}}(f,u^2), 
\quad \forall f,u \in \F \cap L^{\infty}(X,\mu), \]
and extended to unbounded functions by setting $\Gamma(u,u) = \lim_{n \to \infty} \Gamma(u_n,u_n)$, where $u_n = \max\{\min\{u,n\},-n\}$. 
Using polarization, we obtain a bilinear form $d\Gamma$. In particular,
\[ \e^{\mbox{\tiny{s}}}(u,v) = \int d\Gamma(u,v), \quad \forall u,v \in \F. \]
We equip the Hilbert space $\F$ with its natural norm 
$$\|f\|_\F=\left(\int_X |f|^2d\mu +\int d\Gamma(f,f)\right)^{1/2}.$$

Let $U \subset X$ be an open set. Define
 \[ \F_{\mbox{\tiny{{loc}}}}(U)  =  \{ f \in L^2_{\mbox{\tiny{loc}}}(U) : \forall \textrm{ compact } K \subset U, \ \exists f^{\sharp} \in \F,
                      f = f^{\sharp}\big|_K \textrm{ a.e.} \} \]
For $f,g \in \F_{\mbox{\tiny{loc}}}(U)$ we define $\Gamma(f,g)$ locally by 
$\Gamma(f,g)\big|_K = \Gamma(f^{\sharp},g^{\sharp})\big|_K$, where $K \subset U$ 
is open relatively compact and $f^{\sharp},g^{\sharp}$ are functions in $\F$ 
such that $f = f^{\sharp}$, $g = g^{\sharp}$ a.e.~on $K$. Set
\begin{align*}
 \F(U)   &=  \{ u \in \F_{\mbox{\tiny{loc}}}(U) : \int_U |u|^2 d\mu + \int_U d\Gamma(u,u) < \infty \}, \\
\F_{\mbox{\tiny{c}}}(U)  
&=  \{ u \in \F(U) : \textrm{ the essential support of } u \textrm{ is compact in } U \}.\\
\F^0(U)  &= \mbox{ the closure of } \F_{\mbox{\tiny{c}}}(U) \mbox{ for the norm }
\left(\int_U |u|^2 d\mu + \int_U d\Gamma(u,u)\right)^{1/2} . 
\end{align*}

\begin{definition}
The \emph{intrinsic distance} $d := d_{\e^{\mbox{\em\tiny s}}}$ induced by 
$(\e^{\mbox{\em\tiny s}},\F)$ 
is defined as
\[ d_{\e^{\mbox{\emph{\tiny{s}}}}}(x,y) := \sup \big\{ f(x)-f(y): f \in 
\F_{\mbox{\em\tiny{loc}}}(X) \cap C(X), \, d\Gamma(f,f) \leq d\mu \big\},  \]
for all $x,y \in X$, where $C(X)$ is the space of continuous functions on $X$.
\end{definition}

Throughout this paper, the spaces $\F,\F(U), \F_{\mbox{\tiny{c}}}(U), \F^0(U) $ 
and the intrinsic distance $d$ play an essential role. The space $\F$ is the 
equivalent of the Sobolev space of $L^2$ functions with gradient in $L^2$.
The distance $d$ defines the geometry of our space and will be used to introduce  fundamental assumptions.

Consider the following properties of the intrinsic distance that may or may not be satisfied. They are discussed in \cite{Stu95geometry, SturmI}.
\begin{itemize}
\item (A1)  The intrinsic distance  $d $ is finite everywhere 
and defines the original topology of $X$.
\item (A2) The space $(X,d)$ is a complete metric space. 
\item (A2') $\forall x \in X, r > 0$, the open ball $B(x,r)$ 
is relatively compact in  $(X,d)$.
\end{itemize}
Note that if (A1) holds true then, 
by \cite[Theorem 2]{Stu95geometry}, (A2) is  equivalent to (A2').
Moreover, (A1)-(A2) imply that $(X,d)$ is a geodesic space, i.e., 
any two points in $X$ can be connected by a minimal geodesic in $X$. 
See \cite[Theorem 1]{Stu95geometry}.
If (A1) and (A2) hold true then 
the intrinsic distance is also given by (see \cite[Proposition 1]{SturmI}) 
\[  d(x,y) = \sup \big\{ f(x)-f(y): f \in \F \cap C_{\mbox{\tiny{c}}}(X), \, d\Gamma(f,f) \leq d\mu \big\}, \quad x,y \in X. \]
When working in an open subset $Y$ of $X$, it is sometimes 
sufficient to assume only (A1) and 
\begin{itemize}
\item (A2-$Y$) For any ball  $B(x,2r) \subset Y$,  $B(x,r)$ 
is relatively compact. 
\end{itemize}
This is a  version of property (A2') that is localized in a set $Y$ of particular interest. We will not pursue this systematically here but we will make a technical use of this fact at a later stage in the paper. 
In what follows we always assume that either (A1)-(A2) holds or, 
when justified by the context, that (A1)-(A2-$Y$) holds.

\begin{example} Let $\Omega$ be a domain in Euclidean space. 
Consider the (symmetric) Dirichlet form  
$\e_\Omega(f,f)=\int_\Omega |\nabla f|^2d\mu$ with domain $H^1_0(\Omega)$, the 
Sobolev space obtained by closing  the space of smooth functions with 
compact support in $\Omega$ in the norm $\left(\int_\Omega (|f|^2+|\nabla f|^2)dx\right)^{1/2}$. 
This form is regular on $\Omega$. The intrinsic distance is equal to the inner Euclidean distance in $\Omega$ (obtained by minimizing the length of the 
curves in $\Omega$ joining two points of $\Omega$, see the next section) and property {\em (A1)} 
is satisfied.  Property {\em (A2)} is not satisfied but {\em (A2-$Y$)} 
holds true for any $Y$ with $\overline {Y} \subset \Omega$. 
\end{example}

\subsection{Inner metric} \label{sec-innermetric}

Assume  (A1)-(A2) and let $\Omega$ be a non-empty domain in $X$.
For any continuous path $\gamma: [0,1]\rightarrow Y$, set
\[ \textrm{length}(\gamma) = \sup \left\{ \sum_{i=1}^n d(\gamma(t_i),\gamma(t_{i-1})) : n \in \mathbb{N}, 0 \leq t_0 < \ldots < t_n \leq 1 \right\}. \]

\begin{definition}
The \emph{inner metric} on $\Omega$ is defined as
 \[ d_{\Omega}(x,y) = \inf \big\{ \mbox{\em length}(\gamma) \big| \gamma:[0,1] \to \Omega \textrm{ continuous}, \gamma(0) = x, \gamma(1) = y \big\}. \]
Let $\widetilde\Omega$ be the completion of $\Omega$ with respect to $d_{\Omega}$.\end{definition} 
Whenever we consider an inner ball $B_{\widetilde\Omega}(x,R) = \{ y \in \widetilde\Omega : d_{\Omega}(x,y)<R \}$ or $B_{\Omega}(x,R) = B_{\widetilde\Omega}(x,R) \cap \Omega$, we assume that its radius is minimal in the sense that $B_{\widetilde\Omega}(x,R) \neq B_{\widetilde\Omega}(x,r)$ for all $r < R$. 
If $x$ is a point in $\Omega$, denote by $\delta(x) = \delta_{\Omega}(x) = d(x,\partial\Omega)$ the distance from $x$ to the boundary of $\Omega$. Let $\mbox{diam}_{\Omega}(\Omega)$ be the diameter of $\Omega$ in the inner metric $d_{\Omega}$.

\begin{definition} \label{F0UV}
For two open sets $V\subset \Omega$, let
\begin{align*} \F^0_{\mbox{\em \tiny{loc}}}(\Omega,V) 
=& \left\{ f \in L^2_{\mbox{\em \tiny{loc}}}(\Omega) : \forall\, W\subset V, \mbox{ rel. cpt. in } \widetilde{\Omega} \mbox{ with } d_\Omega(W,\Omega\setminus V)>0,\right.\\
&\left. \exists\, f^{\sharp} \in \F^0(U) \textrm{ such that } f=f^{\sharp} \textrm{ a.e. on } W \right\}. \end{align*}
\end{definition}

\begin{definition} Let $\Omega$ be a domain in $X$.
For an open set $V \subset \Omega$, let $V^{\sharp}$ be the largest open set in 
$\widetilde \Omega$ which is contained in the closure of $V$ in 
$\widetilde \Omega$ and 
whose intersection with $\Omega$ is $V$.
\end{definition}

\begin{lemma} \label{lem:2.46}
Let $V$ be an open set in $\Omega$. A function $g \in \F_{\mbox{\em{\tiny{loc}}}}(V)$ is in $\F^0_{\mbox{\em{\tiny{loc}}}}(\Omega,V)$ if and only if we have 
$f g \in \F^0(\Omega)$ for any bounded function $f \in \F(\Omega)$ 
with compact support in $V^{\sharp}$ and such that $d\Gamma(f,f)/d\mu \in L^{\infty}(\Omega,\mu)$. 
\end{lemma}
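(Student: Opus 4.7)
The plan is to prove the two implications separately. The forward direction uses the definition of $\F^0_{\mbox{\tiny{loc}}}(\Omega, V)$ with a carefully chosen relatively compact set $W$; the reverse direction requires constructing a particular cutoff function built from the inner distance $d_\Omega$.

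For the forward direction, assume $g \in \F^0_{\mbox{\tiny{loc}}}(\Omega,V)$ and let $f$ satisfy the hypothesis: bounded, in $\F(\Omega)$, compactly supported in $V^{\sharp}$, with $d\Gamma(f,f)/d\mu\in L^\infty(\Omega,\mu)$. Set $K=\mbox{supp}(f)\subset V^{\sharp}$ (compact in $\widetilde\Omega$) and $W=K\cap\Omega\subset V$. Since $K$ is compact in the open set $V^{\sharp}$, one has $d_{\widetilde\Omega}(K,\widetilde\Omega\setminus V^{\sharp})>0$; as $\Omega\setminus V\subset\widetilde\Omega\setminus V^{\sharp}$, this gives $d_\Omega(W,\Omega\setminus V)>0$, and $W$ is relatively compact in $\overline\Omega$. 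By definition of $\F^0_{\mbox{\tiny{loc}}}(\Omega,V)$ there is $g^{\sharp}\in\F^0(\Omega)$ with $g=g^{\sharp}$ a.e.\ on $W$. Since $f$ vanishes outside $W$, $fg=fg^{\sharp}$ a.e. Approximating $g^{\sharp}$ by $g_n\in\F_{\mbox{\tiny c}}(\Omega)$ in the $\F$-norm gives $fg_n\in\F_{\mbox{\tiny c}}(\Omega)$ (the essential support lies in $\mbox{supp}(g_n)\cap W$, compact in $\Omega$), and the Leibniz rule together with the $L^\infty$ bounds on $f$ and $d\Gamma(f,f)/d\mu$ yields $fg_n\to fg^{\sharp}$ in $\F$-norm, so $fg\in\F^0(\Omega)$.

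For the converse, assume $fg\in\F^0(\Omega)$ for every such test function $f$. Given $W\subset V$ relatively compact in $\overline\Omega$ with $\eta:=d_\Omega(W,\Omega\setminus V)>0$, fix $\epsilon\in(0,\eta/2)$ and set
\[ f(x)=\max\bigl(0,\,1-d_\Omega(x,W)/\epsilon\bigr). \]
Then $f$ equals $1$ on $W$, is $(1/\epsilon)$-Lipschitz with respect to $d_\Omega$, and vanishes outside the closed $\epsilon$-neighborhood $W_\epsilon$ of $W$, which is itself relatively compact in $\widetilde\Omega$ and lies inside $V^{\sharp}$. Locally, around any point $x$ in the support of $f$, a small $d$-ball is contained in $\Omega$ and on such a ball $d$ agrees with $d_\Omega$, so $f$ is locally $d$-Lipschitz with the same constant. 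By the characterization of the intrinsic distance and the strict locality of $\e^{\mbox{\tiny s}}$, this localizes to $d\Gamma(f,f)\le\epsilon^{-2}d\mu$, so $f\in\F(\Omega)$ with $d\Gamma(f,f)/d\mu\in L^\infty$. By hypothesis $fg\in\F^0(\Omega)$, and since $f\equiv 1$ on $W$ we get $g=fg$ a.e.\ on $W$, so $g^{\sharp}:=fg$ witnesses $g\in\F^0_{\mbox{\tiny{loc}}}(\Omega,V)$.

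The main obstacle is the cutoff construction in the converse direction: the function $f$ is naturally Lipschitz for $d_\Omega$, not for $d$, whereas the intrinsic-distance characterization only provides the energy bound $d\Gamma\le L^2 d\mu$ from $d$-Lipschitz control. The resolution relies on the strict locality of the form and the fact that $d$ and $d_\Omega$ coincide on small $d$-balls contained in $\Omega$, so the $d_\Omega$-Lipschitz bound transfers to a local $d$-Lipschitz bound and then, by locality of $\Gamma$, to the required global essential bound on $d\Gamma(f,f)/d\mu$. A secondary point to keep track of is that $V^{\sharp}$ is exactly the right enlargement of $V$ in $\widetilde\Omega$ to make the relations $d_\Omega(W,\Omega\setminus V)>0$ and $\mbox{supp}(f)\subset V^{\sharp}$ equivalent up to passing to slightly smaller/larger sets.
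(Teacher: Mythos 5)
The paper does not actually prove this lemma; it simply cites \cite{GyryaSC}, and your two-part strategy (approximation plus the Leibniz rule for the forward implication, a $d_\Omega$-based cutoff for the converse) is the standard route taken there. Your forward direction is essentially correct, up to the routine truncation needed to apply the Leibniz inequality $d\Gamma(uv,uv)\le 2(u^2\,d\Gamma(v,v)+v^2\,d\Gamma(u,u))$ when $g_n-g^{\sharp}$ is unbounded, and up to replacing $W=K\cap\Omega$ by a slightly larger open set if the definition of $\F^0_{\mathrm{loc}}(\Omega,V)$ is read as quantifying over open $W$.

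The converse direction has a genuine gap at exactly the step you identify as the main obstacle. The characterization of the intrinsic distance gives the implication ``$d\Gamma(f,f)\le d\mu$ $\Rightarrow$ $f$ is $1$-Lipschitz,'' not its converse; the converse (locally Lipschitz $\Rightarrow$ bounded energy density) is a Rademacher-type statement --- Theorem \ref{thm:Upsilon} and Corollary \ref{cor:2.22} in this paper --- which is only available under a local volume doubling hypothesis that Lemma \ref{lem:2.46} does not carry. So the assertion ``this localizes to $d\Gamma(f,f)\le\epsilon^{-2}d\mu$'' does not follow from the tools you invoke. The repair is to exploit the special structure of the cutoff: under (A1) alone, Sturm's results give $d\Gamma(d(\cdot,y)\wedge n,\,d(\cdot,y)\wedge n)\le d\mu$; this bound passes to infima over $y$ in a set by the truncation (lattice) property and to Lipschitz post-compositions by the chain rule, and then your local identification of $d$ with $d_\Omega$ together with strict locality of $\Gamma$ yields the bound for $f$. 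A second, smaller gap: you assert that the closed $\epsilon$-neighborhood $W_\epsilon$ of $W$ is relatively compact in $\widetilde\Omega$, but $W$ is only assumed relatively compact in $\overline\Omega$ (closure in $(X,d)$), and a $d$-precompact set need not have a $d_\Omega$-precompact neighborhood in the completion $\widetilde\Omega$ (consider a comb-type domain in which points accumulate in $d$ while remaining uniformly $d_\Omega$-separated). This compactness must either be argued from $d_\Omega(W,\Omega\setminus V)>0$ and the structure of $V^{\sharp}$, or obtained by reading the relative compactness in the definition of $\F^0_{\mathrm{loc}}(\Omega,V)$ as taken in $\widetilde\Omega$.
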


\begin{proof}
See \cite[Lemma 2.46]{GyryaSC}.
\end{proof}

\subsection{The doubling property and Poincar\'e inequality}

Let $Y \subset X$ be open 
and assume that the intrinsic metric $d$ satisfies (A1)-(A2)
(more generally, (A1) and (A2-$Y$) suffices). 
\begin{definition}
The form 
$(\e^{\mbox{\em\tiny s}},\F)$ 
satisfies the \emph{volume doubling property} on $Y$ if there exists a constant $D_Y \in (0,\infty)$ such that for every ball $B(x,2r) \subset Y$, 
\begin{align} 
V(x,2r) \leq D_Y \, V(x,r), \tag{VD}
\end{align}
where $V(x,r) = \mu( B(x,r))$ denotes the volume of $B(x,r)$.
\end{definition}

\begin{definition}
The form $(\e^{\mbox{\em \tiny s}},\F)$ 
satisfies the (weak) \emph{Poincar\'e inequality} on $Y$ if there exists a constant $P_Y \in (0,\infty)$ such that for any ball $B(x,2r) \subset Y$,
\begin{align}
\forall f \in D(\e), \  \int_{B(x,r)} |f - f_B|^2 d\mu  \leq  P_Y \, r^2 \int_{B(x,2r)} d\Gamma(f,f), \tag{PI}
\end{align}
where $f_B = \frac{1}{V(x,r)} \int_{B(x,r)} f d\mu$ is the mean of $f$ over $B(x,r)$. 
\end{definition}
The term weak refers to the fact that the ball $B(x,2r)$ is used on the 
right-hand side of the Poincar\'e inequality. 
It will be omitted in what follows. Under the doubling condition, 
strong and weak versions of the Poincar\'e inequality are in fact equivalent 
(e.g., \cite{SC02}).

If $Y=X$, the properties introduced in these definitions have a 
very global nature
as they hold uniformly at all scales and locations.  It is natural to introduce 
a more local version of these properties.

\begin{definition}
The form $(\e^{\mbox{\em\tiny s}},\F)$ 
satisfies the volume 
doubling property and the  Poincar\'e inequality {\em locally}
on $Y$ if for all $x\in Y$ there is a neighborhood $Y(x)$ of $x$ so 
that the volume doubling property and the Poincar\'e inequality hold in $Y(x)$.  

The form $(\e^{\mbox{\em\tiny s}},\F)$ 
satisfies the volume 
doubling property and the Poincar\'e inequality {\em up to scale} $R$ 
in $Y$ if 
the volume doubling property and the Poincar\'e inequality hold in $B(x,2R)$
with constants independent of $x$, for all $x\in Y$.  
\end{definition}

\begin{example} Let $(M,g)$ be a complete Riemannian manifold and $Y$ 
an open subset of $M$. Equip $M$ with its Riemannian measure and 
the Dirichlet form   $\e^{\mbox{\em\tiny s}}(f_1,f_2)=
\int_M g(\nabla f_1,\nabla f_2)d\mu$
with its natural domain $\F$ (the first Sobolev space on $M$). In this case, the intrinsic distance on $M$ equals the Riemannian distance.
\begin{itemize}
\item The volume doubling property and the Poincar\'e 
inequality hold locally on on $Y$.  
\item If $Ric \ge -\kappa g$ on the $2R$-neighborhood of  $Y$ for some fixed  
$\kappa>0$ and $R>0$ then
the volume doubling property and the Poincar\'e 
inequality hold up to scale $R$ on $Y$.
\item If $Ric \ge 0$ on $Y$ then the volume doubling property and the Poincar\'e 
inequality hold on $Y$. 
\end{itemize}
\end{example}

\begin{example} Let $G$ be a unimodular Lie group equipped with 
its Haar measure and with a family $\{X_1,\dots,X_k\}$ of left invariant 
vector fields that, viewed as elements of the Lie algebra, generates 
the Lie algebra of $G$ (this condition is often called the 
H\"ormander condition). Consider the Dirichlet form
$\e^{\mbox{\em\tiny s}}(f_1,f_2)=\int_G \sum_i X_if_1 X_if_2 d\mu$
with its natural domain $\F$, the space of functions in $L^2(G,\mu)$
such that, for each $i$, the distribution $X_if$ can be represented by an element of $L^2(G,\mu)$. In this case, the intrinsic distance is equal to the associated 
sub-Riemannian distance.
\begin{itemize}  
\item The volume doubling property and the Poincar\'e 
inequality hold up to scale $R$ on $G$ for any fixed $R>0$.
\item If $G$ has polynomial volume growth (i.e., $\exists \,A,\forall\,r>0,\;\;V(e,r)\le Cr^A$)
then the volume doubling property and the Poincar\'e 
inequality hold on $G$. 
\end{itemize}
See, e.g., \cite[Section 5.6]{SC02} and \cite{VSCC}.
\end{example}

\subsection{Carr\'e du champ and Lipschitz functions}


\begin{theorem} \label{thm:Upsilon}
Suppose the form $(\e^{\mbox{\emph{\tiny{s}}}},\F)$ satisfies \emph{(A1)-(A2)}, and the volume doubling property holds locally on $X$. Then for any Lipschitz function $f$ with Lipschitz constant $C_L$, the energy measure $d\Gamma(f,f)$ is absolutely continuous with respect to $d\mu$ and the Radon-Nikodym derivative $\Upsilon(f,f) = d\Gamma(f,f) / d\mu$ satisfies
 \[ \Upsilon(f,f) \leq C_L^2 \]
 almost everywhere.
\end{theorem}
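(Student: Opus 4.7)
The plan is to realize any $C_L$-Lipschitz function $f$ as a countable infimum of distance-type comparison functions whose energy density is bounded by $C_L^2$, and then to transfer this bound to $f$ itself via the strict locality and lower semicontinuity of the form.

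First, I would establish the energy bound for distance functions. For each fixed $z \in X$ and each $R > 0$, I would show that the truncated distance $\rho_{z,R}(x) := \big(C_L\, d(x,z)\big) \wedge (C_L R)$ lies in $\F_{\mbox{\tiny loc}}(X) \cap C(X)$ and satisfies $d\Gamma(\rho_{z,R}, \rho_{z,R}) \leq C_L^2\, d\mu$ as measures on $X$. The geometric content here is essentially built into the definition of $d$: one extracts, locally, functions from $\F_{\mbox{\tiny loc}} \cap C(X)$ that approximately realize the supremum defining $d(\cdot,z)$, and then uses regularity of $(\e^{\mbox{\tiny s}}, \F)$ together with local doubling to produce truncated approximants in $\F$ whose energies converge to that of $\rho_{z,R}$ in a controlled way.

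Second, using separability of $X$, I would fix a countable dense subset $\{y_k\}_{k \geq 1} \subset X$ and write
\[
f(x) = \inf_{k \geq 1} \big\{ f(y_k) + C_L\, d(x, y_k) \big\}, \qquad x \in X,
\]
which follows from the $C_L$-Lipschitz bound $|f(x)-f(y_k)| \leq C_L d(x,y_k)$ together with density. Setting $g_k := f(y_k) + \rho_{y_k, R}$ and $F_N := \min_{1 \leq k \leq N} g_k$, one has $F_N \searrow f$ uniformly on any fixed ball of radius $R/2$. Each $g_k$ inherits $d\Gamma(g_k, g_k) \leq C_L^2\, d\mu$ from the first step (adding constants does not change the energy measure), and the chain rule for strictly local energy measures, applied through $\min(a,b) = \tfrac{1}{2}(a+b-|a-b|)$, propagates this pointwise bound to each finite minimum $F_N$.

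Finally, I would pass to the limit $N \to \infty$. Lower semicontinuity of the functional $u \mapsto \int_A d\Gamma(u,u)$ on any relatively compact open $A \subset X$, together with the uniform bound $\int_A d\Gamma(F_N, F_N) \leq C_L^2 \mu(A)$ and the uniform convergence $F_N \to f$, shows that $f \in \F_{\mbox{\tiny loc}}$ on $A$ and $\int_A d\Gamma(f,f) \leq C_L^2 \mu(A)$; exhausting $X$ by balls and letting $R \to \infty$ yields $\Upsilon(f,f) \leq C_L^2$ almost everywhere. The main obstacle is the first step: converting the supremum definition of $d$ into a genuine pointwise bound on the energy measure of $\rho_{z,R}$ requires a careful approximation argument, and this is precisely where local doubling enters, providing the geometric control needed for localized cut-off and smoothing within a fixed ball.
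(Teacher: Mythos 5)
Your overall strategy---realize $f$ as an infimum of cone functions $f(y)+C_L\,d(\cdot,y)$, use strict locality to control finite minima, and pass to the limit by closedness/lower semicontinuity of the form---is the classical Sturm-type argument and is viable in principle. (For comparison: the paper does not prove the theorem itself but simply cites Koskela--Zhou.) There are, however, two concrete problems with the proposal as written.

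First, the claim that $F_N=\min_{1\le k\le N}\big(f(y_k)+\rho_{y_k,R}\big)\searrow f$ on a ball $B(x_0,R/2)$ is false when $\{y_k\}$ is dense in all of $X$: for a center $y_k$ with $d(x,y_k)>R$ the truncated cone contributes $f(y_k)+C_LR$ at $x$, which can be strictly smaller than $f(x)$ (take $f$ unbounded below, or any $f$ whose oscillation exceeds $C_LR$), so the infimum undershoots $f$. The repair is easy---restrict the centers to a countable dense subset of $B(x_0,R/2)$ itself, where one still has $f=\inf_k\big(f(y_k)+C_L\,d(\cdot,y_k)\big)$ and where the truncation at level $C_LR$ is inactive anyway---but the step as stated is wrong, not merely imprecise.

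Second, and more seriously, the entire argument rests on the assertion that $d\Gamma(d(\cdot,z),d(\cdot,z))\le d\mu$, which is precisely the point you leave as a black box. This is not ``built into the definition of $d$'': the definition only guarantees that admissible functions are $1$-Lipschitz with respect to $d$, not that $d(\cdot,z)$ is itself admissible. The standard proof writes $d(\cdot,z)=\sup\{f:\ f\in\F_{\mbox{\tiny loc}}(X)\cap C(X),\ d\Gamma(f,f)\le d\mu,\ f(z)=0\}$ pointwise, reduces to a countable supremum using separability and the equi-Lipschitz property of the admissible class, uses strict locality to see that finite maxima stay admissible, and then invokes (A1)--(A2) (finiteness and continuity of $d$, relative compactness of balls, existence of cut-offs) together with closedness of the form to pass to the increasing limit. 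Note that volume doubling plays no role here, contrary to your closing remark; indeed your proposal never uses doubling in any essential way, which is consistent with the fact that the global bound $\Upsilon(f,f)\le C_L^2$ holds under (A1)--(A2) alone (doubling is what Koskela--Zhou need for the sharper pointwise statement in terms of the local Lipschitz constant). As it stands, the one genuinely hard step of your proof is deferred to an unexplained appeal to the wrong hypothesis, and the limiting step that is spelled out contains a false intermediate claim.
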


\begin{proof}
See \cite[Theorem 2.1, Remark 2.1(ii)]{KosZhou}.
\end{proof}

The next corollary is used to prove Proposition \ref{prop:5.8 h} and Lemma \ref{lem:A1 for d_U}.

\begin{corollary} \label{cor:2.22} Let $\Omega$ be a domain in $X$. Suppose the model form $(\e^{\mbox{\emph{\tiny{s}}}},\F)$ satisfies \emph{(A1)-(A2-$\overline{\Omega}$)}, and the volume doubling property holds locally on $\overline{\Omega}$. Then any function $f$ on $\Omega$ which is Lipschitz with respect to $d_\Omega$ 
with Lipschitz constant $C_L$ is in $\F_{\mbox{\em{\tiny{loc}}}}(\Omega)$ 
and satisfies
 \[  C_L \geq \sup_{\Omega} \sqrt{ \Upsilon(f,f) }. \]
\end{corollary}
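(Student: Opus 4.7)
The plan is to localize the problem and reduce to Theorem~\ref{thm:Upsilon}, which applies to functions Lipschitz with respect to the ambient intrinsic distance $d$. Since $d_\Omega \geq d$ in general, a function that is $C_L$-Lipschitz with respect to $d_\Omega$ need not be $C_L$-Lipschitz with respect to $d$ on large sets. However, \emph{locally inside $\Omega$} the two metrics agree: for any $x \in \Omega$, openness of $\Omega$ and (A1) give $r>0$ with $\overline{B_d(x,2r)}\subset \Omega$, and (A2-$\overline{\Omega}$) makes this closed ball compact. A local Hopf--Rinow argument then produces, for each pair $y,z \in B_d(x, r/2)$, a minimal $d$-geodesic joining them that is contained in $B_d(x, 2r)\subset \Omega$; evaluating $d_\Omega$ along this geodesic gives $d_\Omega(y,z)=d(y,z)$.

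Consequently the restriction of $f$ to $B_d(x,r/2)$ is $C_L$-Lipschitz with respect to $d$. I would then extend $f|_{\overline{B_d(x,r/4)}}$ to all of $X$ by the McShane formula
\[
\tilde f(y) \;=\; \inf_{z \in \overline{B_d(x,r/4)}} \bigl\{ f(z) + C_L\, d(y,z) \bigr\},
\]
obtaining a $C_L$-Lipschitz (with respect to $d$) function $\tilde f$ on $X$ that equals $f$ on $B_d(x,r/4)$. Since the volume doubling property holds locally on $\overline{\Omega}$, Theorem~\ref{thm:Upsilon} applies to $\tilde f$ and yields $\tilde f \in \F_{\mbox{\tiny{loc}}}(X)$ together with $\Upsilon(\tilde f, \tilde f) \leq C_L^2$ almost everywhere. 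By the local nature of $\F_{\mbox{\tiny{loc}}}$ and of the energy measure, this forces $f \in \F_{\mbox{\tiny{loc}}}(B_d(x,r/4))$ and $\Upsilon(f,f) = \Upsilon(\tilde f,\tilde f) \leq C_L^2$ almost everywhere on $B_d(x,r/4)$.

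To assemble the global conclusion, I would cover an arbitrary compact $K \subset \Omega$ by finitely many such balls $B_d(x_i, r_i/4)$, on each of which we have a local extension $\tilde f_i \in \F$. A partition-of-unity argument using $d$-Lipschitz cutoffs (which lie in $\F$ by Theorem~\ref{thm:Upsilon} and have compact support in $\Omega$) patches these into a single representative $f^{\sharp} \in \F$ with $f = f^\sharp$ a.e.\ on $K$; this verifies $f \in \F_{\mbox{\tiny{loc}}}(\Omega)$. Taking the supremum over $x \in \Omega$ of the pointwise bounds yields the desired $\sup_{\Omega} \sqrt{\Upsilon(f,f)} \leq C_L$.

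The main obstacle is the only genuinely geometric step, namely establishing the local identity $d_\Omega = d$ on a sufficiently small ball under the weaker hypothesis (A2-$\overline{\Omega}$) rather than the global (A2). This requires showing, using only that $\overline{B_d(x,2r)}$ is compact, that minimizing $d$-geodesics between nearby points exist and remain inside that ball; once this local geodesic structure is in place, McShane extension and Theorem~\ref{thm:Upsilon} do the rest of the work essentially mechanically.
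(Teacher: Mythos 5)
Your proposal follows the same route the paper intends: the paper's proof is only a citation of Theorem \ref{thm:Upsilon} together with ``a simple adaptation of the arguments in [Hir03], [Wea00], [GyryaSC]'', and what you write out --- local identification of $d_\Omega$ with $d$ inside $\Omega$, McShane extension, the Koskela--Zhou bound on the energy density, and a cutoff/patching argument to produce representatives in $\F$ --- is precisely that adaptation made explicit. You also correctly isolate the genuinely geometric step (the local coincidence $d=d_\Omega$ under the weakened completeness hypothesis), which is one half of the ``adaptation''.

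One step, however, is not justified as written: you invoke Theorem \ref{thm:Upsilon} for the McShane extension $\tilde f$, but that theorem assumes (A1)--(A2) \emph{globally} and volume doubling locally on all of $X$, whereas the corollary deliberately assumes only (A2-$\overline{\Omega}$) and doubling locally on $\overline{\Omega}$. This weakening is not cosmetic --- the motivating example right after (A1)--(A2') is $H^1_0(\Omega)$ on a Euclidean domain, where (A2) genuinely fails --- so you cannot treat the theorem as a black box applied to a globally defined Lipschitz function. What you actually need is the localized form of the Koskela--Zhou result: on any ball $B$ with $\overline{B}$ compact in $\Omega$ on which doubling holds, a $d$-Lipschitz function lies in $\F_{\mbox{\tiny{loc}}}(B)$ with $\Upsilon \le C_L^2$ there. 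Their proof is local and yields this, and since the energy measure is local this suffices for both your pointwise bound and (after multiplying by Lipschitz cutoffs supported in $\Omega$) for membership of the patched function in $\F$. With that substitution --- which is the other half of the ``simple adaptation'' the paper alludes to --- your argument is complete.
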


\begin{proof}
Follows from Theorem \ref{thm:Upsilon} and a simple adaption of the arguments in \cite[Corollary 3.6]{Hir03}, \cite{Wea00} or \cite[Corollary 2.22]{GyryaSC}.
\end{proof}

\section{Inner uniformity}  \label{ssec:uniform domains}
Let $X,\mu, \e^{\mbox{\tiny{s}}},\F, d$ be as above and assume that (A1)-(A2)
are satisfied so that $(X,d)$ is a complete metric space.

\subsection{Inner uniform domains}

\begin{definition}
Fix $c \in (0,1)$, $C \in (1,\infty)$. Let $\Omega$ be a domain in $X$.
Let $\gamma: [\alpha,\beta] \to \Omega$ be a rectifiable curve in $\Omega$. 
We say that $\gamma$ is a $(c,C)$-\emph{uniform} curve in $\Omega$ if 
the following two conditions are satisfied:
\begin{enumerate}
\item $\forall   t \in [\alpha,\beta]$, $\delta_{\Omega} \big( \gamma(t) \big) \geq c  \, \min\left\{ d \big( \gamma(\alpha), \gamma(t) \big) , d \big( \gamma(t), \gamma(\beta) \big) \right\}$
\item $ \textrm{\em length}(\gamma) \leq C \, d \big( \gamma(\alpha), \gamma(\beta) \big).$
\end{enumerate}
The domain $\Omega$ is called
$(c,C)$-\emph{uniform} if any two points in $\Omega$ can be joined by a $(c,C)$-uniform curve in $\Omega$.
\end{definition}

\begin{definition} \label{def-IU}
Fix $c \in (0,1)$, $C \in (1,\infty)$. 
\begin{enumerate}
\item Let $\gamma: [\alpha,\beta] \to \Omega$ be a rectifiable curve in 
$\Omega$. 
We say that $\gamma$ is a $(c,C)$-\emph{inner uniform} curve in $\Omega$ 
if its is $(c,C)$-uniform in $\Omega$  in  $(\widetilde{\Omega},d_\Omega)$. 
\item We say that the domain $\Omega$ is $(c,C)$-\emph{inner uniform} 
if $\Omega$ 
is $(c,C)$-uniform  in $(\widetilde{\Omega},d_\Omega)$.
\end{enumerate}
\end{definition}

\begin{remark}
The notions of $(c,C)$-length-uniformity and  inner-$(c,C)$-length-uniformity 
are defined analogously by replacing 
$d(\gamma(s),\gamma(t))$ by $\textrm{length} (\gamma\big|_{[s,t]})$ in condition 
{\em (i)}. The arguments used in    \cite[Lemma 2.7]{MS79} and 
\cite[Proposition 3.3]{GyryaSC} show that if $\gamma$ is a ($c$-$C$)-uniform curve
in $\Omega$ joining $x$ and $y$ of length  at most $R$ and if 
the doubling property holds in $B(x,2R)$ then there is a $(c',C')$-length 
uniform curve joining $x$ and $y$ in $\Omega$. For our purpose, this means that 
uniformity (resp. inner uniformity) and length-uniformity 
(resp. inner-length-uniformity) are equivalent notions.
\end{remark}

\begin{lemma} \label{lem:x_r} 
Let $\Omega$ be a $(c_u,C_u)$-inner uniform domain in $(X,d)$. 
For every ball $B = B_{\widetilde\Omega}(x,r)$ in $(\widetilde\Omega,d_{\Omega})$ with minimal radius, there exists a point $x_r \in B$ with $d_{\Omega}(x,x_r) = r/4$ and $d(x_r,\widetilde\Omega \setminus \Omega) \geq c_ur/8$.
\end{lemma}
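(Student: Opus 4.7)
The plan is to produce $x_r$ as an interior point of a $(c_u,C_u)$-inner uniform curve that joins $x$ to a suitably chosen ``far'' point $y \in \Omega$. The key input is the minimal-radius condition on $B$, which (together with the density of $\Omega$ in $\widetilde\Omega$) lets me choose $y$ with $d_\Omega(x,y)$ comparable to $r$ while still lying in $\Omega$, where the definition of inner uniformity can be applied.

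First I would produce the far point. The minimality of the radius says $B_{\widetilde\Omega}(x,r') \subsetneq B_{\widetilde\Omega}(x,r)$ for every $r' < r$, so for any $\epsilon > 0$ there exists $\tilde y \in \widetilde\Omega$ with $d_\Omega(x,\tilde y) \in [r-\epsilon, r)$. By definition of $\widetilde\Omega$ as the completion of $\Omega$ under $d_\Omega$, the set $\Omega$ is $d_\Omega$-dense in $\widetilde\Omega$, so a small $d_\Omega$-perturbation of $\tilde y$ produces an actual point $y \in \Omega$ with $d_\Omega(x,y) \in [r/2, r)$.

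Next, apply inner uniformity of $\Omega$ to obtain a $(c_u, C_u)$-inner uniform curve $\gamma:[0,L] \to \Omega$ from $x$ to $y$, parametrized by arc length. The map $t \mapsto d_\Omega(x,\gamma(t))$ is $1$-Lipschitz (by the triangle inequality and the arc-length parametrization), equals $0$ at $t=0$ and equals $d_\Omega(x,y) \geq r/2$ at $t=L$. The intermediate value theorem yields $t_0$ with $d_\Omega(x,\gamma(t_0)) = r/4$; set $x_r := \gamma(t_0)$. Then $x_r \in B$ since $r/4 < r$, and $d_\Omega(x,x_r) = r/4$ as required.

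Finally I would read off the boundary distance directly from the inner uniform condition at $t_0$:
\[
d_\Omega\big(x_r, \widetilde\Omega \setminus \Omega\big) \;\geq\; c_u \min\!\big\{ d_\Omega(x,x_r),\, d_\Omega(x_r,y) \big\}.
\]
The first argument of the minimum is $r/4$ by construction, and the second obeys $d_\Omega(x_r,y) \geq d_\Omega(x,y) - d_\Omega(x,x_r) \geq r/2 - r/4 = r/4$ by the triangle inequality. Hence the bound is $c_u r/4 \geq c_u r/8$, which gives the desired estimate on the distance of $x_r$ to the inner boundary $\widetilde\Omega \setminus \Omega$.

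The only real subtlety, and the step where I would need to be careful, is the transition from $\tilde y \in \widetilde\Omega$ to $y \in \Omega$: the definition of inner uniformity furnishes curves only between points of $\Omega$, so I cannot apply it to $\tilde y$ directly. The density of $\Omega$ in $\widetilde\Omega$ and the fact that the minimality of $r$ gives strict inequality $d_\Omega(x,\tilde y) < r$ together provide enough slack to replace $\tilde y$ by a genuine point of $\Omega$ whose distance to $x$ still exceeds $r/2$; everything else is then a routine intermediate-value argument combined with the defining inequality of an inner uniform curve.
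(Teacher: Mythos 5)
Your argument is the standard one (the paper itself gives no proof, only a citation to \cite[Lemma 3.20]{GyryaSC}): use minimality of the radius to manufacture a point of $\Omega$ at $d_\Omega$-distance comparable to $r$ from $x$, run an inner uniform curve to it, locate $x_r$ on that curve by the intermediate value theorem, and read off the boundary clearance from condition (i) of uniformity, where the slack between $c_ur/4$ and the required $c_ur/8$ absorbs the approximations. All of that is sound, and you correctly identify that minimality is exactly what guarantees the existence of the far point $\tilde y$.

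There is, however, one gap you do not address: you treat the endpoint $\tilde y\in\widetilde\Omega$ carefully, but you apply inner uniformity to a curve \emph{starting at $x$}, and the center $x$ of the ball is only assumed to lie in $\widetilde\Omega$, not in $\Omega$. This is not a hypothetical concern: elsewhere in the paper the lemma is invoked for centers $\xi\in W^{\sharp}\setminus W$ and $\xi\in\widetilde U\setminus U$, i.e.\ for boundary points of the completion, where the definition of a $(c_u,C_u)$-inner uniform domain furnishes no curve emanating from $x$. The repair is the same density argument you already use for $\tilde y$: pick $x'\in\Omega$ with $d_\Omega(x,x')<\varepsilon\le r/16$, join $x'$ to $y$ by an inner uniform curve $\gamma$, and choose $t_0$ with $d_\Omega(x,\gamma(t_0))=r/4$ (possible since $d_\Omega(x,\gamma(0))<\varepsilon<r/4\le d_\Omega(x,\gamma(L))$). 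Condition (i) then gives
\[
\delta_\Omega(x_r)\;\ge\;c_u\min\bigl\{d_\Omega(x',x_r),\,d_\Omega(x_r,y)\bigr\}\;\ge\;c_u\min\bigl\{r/4-\varepsilon,\,r/4-\varepsilon\bigr\}\;\ge\;c_ur/8,
\]
which is precisely why the lemma asks for $c_ur/8$ rather than the $c_ur/4$ your uncorrected computation produces. A second, milder point: you silently identify the lower bound on $d_\Omega(x_r,\widetilde\Omega\setminus\Omega)$ coming from the uniformity condition with the quantity $d(x_r,\widetilde\Omega\setminus\Omega)$ in the statement; under (A1)--(A2) the space $(X,d)$ is geodesic and these quantities agree with $\delta_\Omega(x_r)=d(x_r,\partial\Omega)$, but a sentence acknowledging this identification would make the proof complete.
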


\begin{proof} This is immediate, see \cite[Lemma 3.20]{GyryaSC}.
\end{proof}

Proving that a domain $\Omega$ is inner uniform is a difficult task. 
In fact, we lack a general method of constructing inner uniform domains 
in, say, complete metric length spaces. On the other hand, many domains 
in Euclidean space are inner uniform.

\begin{example} In Euclidean space, any bounded convex domain is uniform. In addition, if $\Omega$ is convex and  
$B(x,aR)\subset \Omega \subset B(x,AR)$ then the uniformity constants
$c_u,C_u$ depend only on $a,A$.  Any bounded 
domain with piecewise smooth boundary with
a finite number of singularities and non-zero interior angle at each of 
the singularities is inner uniform.  The open unit ball in $\mathbb R^n$, 
$n\ge 2$, with the trace of the half-hyperplane $\{x: x_n=0, x_{n-1}<0 \}$ 
deleted is inner uniform. The interior and exterior of the Koch snowflake 
are inner uniform domains (in fact, uniform).  
The exterior of any convex set is inner uniform.
\end{example} 

\begin{example} Let $G=\mathbb R^3$ be the Heisenberg group with law 
$$g_1g_2= (x_1+x_2,y_1+y_2, z_1+z_2+(1/2)(x_1y_2-x_2y_1)), 
\;\;g_i=(x_i,y_i,z_i).$$
Let $X$ and $Y$ be the left invariant vector fields on $G$ with $X(0)=\partial_x$, $Y(0)=\partial_y$. Let $\e(f,f)=\int_G( |Xf|^2+|Yf|^2)d\mu$ where $\mu$ denotes 
the Haar measure on $G$ and the domain of $\e$ is the closure of smooth compactly functions for the norm $(\int (|f|^2+ |Xf|^2+|Yf|^2)d\mu)^{1/2}$. Let $d$ 
be the corresponding intrinsic distance.   Examples of uniform domains include
any coordinate half-space through the origin, the coordinate unit cube in 
$\mathbb R^3$ and any metric ball $B(x,r)$ in $(G,d)$. See 
\cite{Greshnov1,Greshnov2} and \cite{GyryaSC} for further pointers 
to the literature.
\end{example}

\subsection{Local inner uniformity} \label{ssec:local inner uniformity}

In \cite{LierlSC1}, the authors derived a scale invariant boundary 
Harnack principle  under a local version of inner uniformity which we now recall.


\begin{definition} \label{def:R_xi}  
Fix $c_u \in (0,1)$, $C_u \in (1,\infty)$ and a domain $\Omega$.
For a point $\xi \in \widetilde{\Omega}$,
 let $R(\Omega,\xi) \in [0,\infty]$ be the 
largest  $R\ge 0$ so that
\begin{enumerate}
\item
$ 8 R \leq  \textrm{\em diam}_{\Omega}(\Omega)$ 
(this is a non-trivial condition only when $\Omega$ is a bounded domain),
\item
Any two points in $B_{\widetilde\Omega}(\xi, 8R)$ 
can be connected by a curve that is $(c_u,C_u)$-inner uniform in $\Omega$. 
\end{enumerate}
\end{definition}

\begin{remark} \label{rem-R>}
It easily follows from Definition \ref{def:R_xi} that if $\xi$ 
is such that $R(\Omega,\xi)>0$ then there exists 
$\eta>0$ such that  
$$d_\Omega(\xi,\xi')< R(\Omega,\xi) \Longrightarrow 
R(\Omega,\xi')>\eta R(\Omega, \xi).$$
\end{remark}

Consider non-empty domains $W \subset \Omega\subset X$. 
Let $W^{\sharp}$ be the largest open set 
in $(\widetilde \Omega, d_\Omega)$ whose intersection with $\Omega$ is $W$.

\begin{remark}
Any inner ball in $(\widetilde W,d_W)$ that lies in $W^{\sharp}$ 
is also an inner ball in $(\widetilde \Omega,d_\Omega)$. 
However, the metrics $d_W$ and $d_\Omega$ do not necessarily coincide.
\end{remark}

\begin{definition} Fix non-empty  domains $W\subset \Omega\subset X$.
\begin{enumerate}
\item We say that $\Omega$ is locally inner uniform near $W$ if for any point 
$\xi\in W^{\sharp}$ 
we have  $R(\Omega,\xi)>0$ . 
\item We say that $\Omega$ is locally inner uniform up to scale $R>0$
near $W$ if for any point 
$\xi\in W^{\sharp}$, we have
$R(\Omega,\xi)\ge  R$.  
\end{enumerate}
\end{definition}

\begin{remark} 
\begin{enumerate}
\item 
From these definitions, it follows easily that if $\Omega$ is 
$(c_u,C_u)$-inner uniform 
then  $R(\Omega,\xi) 
\simeq \mbox{diam}_\Omega(\Omega)$ 
for each 
$\xi \in  \widetilde{\Omega}$. 
The constants implicitly contained
in the notation $\simeq$ depend only on $c_u,C_u$.
\item By Remark \ref{rem-R>}, if $\Omega$ is locally inner uniform near $W$
and $\xi\in W^{\sharp}$, then there exists $R_\xi$ such that
$\Omega$ is  locally inner uniform up to scale $R_\xi$ 
near $B_{\Omega}(\xi, R_\xi)$.    
\item Assume that $\Omega$ is locally 
$(c_u,C_u)$-inner uniform up to scale $R$ near $W$. Then for any point 
$\xi\in W^{\sharp}$ and $r\in (0,R)$ there exists a point $\xi_r\in \Omega$
such that $d_\Omega(\xi,\xi_r)=r/4$ and 
$d(\xi_r,\widetilde{\Omega}\setminus \Omega)\ge c_u r/8$.  
See Lemma \ref{lem:x_r}  and \cite[Lemma 3.20]{GyryaSC}.
\item In $\mathbb R^n$, any domain with smooth boundary 
is locally inner uniform. Many such domains 
(e.g., an unbounded ``turnip'' domain) are not locally 
inner uniform up to scale $R$.
\end{enumerate}
\end{remark}

\section{Adapted forms} \label{sec3:non-sym forms}

In this section, we introduce a large class of real bilinear  
forms  on $L^2(X,d\mu)$ that all share a common domain $\F$, the domain of
our model form $\e^{\mbox{\tiny s}}$. Further, these forms are of the type
$\e^{\mbox{\tiny s}}+\mbox{ lower order terms}$. 
Our goal is to pick one of these forms, $\e$, and to study the 
Dirichlet heat kernel (and Dirichlet semigroup) associated to $\e$ 
in a domain $U$ under the hypothesis that $U$ 
is inner uniform or, more generally, locally inner uniform.

\subsection{First and zero order parts}
 
Given a bilinear form $\e$, we set
$$\e^{\mbox{\tiny sym}}(f,g)=\frac{1}{2}(\e(f,g)+\e(g,f)) \;\mbox{ and }\;\;
\e^{\mbox{\tiny skew}}(f,g)=\frac{1}{2}(\e(f,g)-\e(g,f)).$$
These are, respectively,  the symmetric and skew part of $\e$.
For $f,g\in \F_{\mbox{\tiny{c}}}\cap L^\infty(X,\mu)$, we also set
$$\l(f,g)= \frac{1}{2}\left(\e^{\mbox{\tiny skew}}(fg,1)+
\e^{\mbox{\tiny skew}}(f,g)\right)\;\mbox{ and } \;\; \r(f,g)=-\l (g,f) .$$
Obviously, 
$$\e^{\mbox{\tiny skew}}(f,g)=\l(f,g)+\r(f,g).$$

We recall the following definition taken from \cite{LierlSC2}.
\begin{definition} \label{def-firstorder}
Assuming $\e$ is local with $D(\e)=\F$, 
we say that  $\e^{\mbox{\em\tiny{skew}}}$ is a 
{\em chain rule skew form relative to $\F$} 
if the following two properties hold:
\begin{itemize}
\item
For any $u,v,f \in \F\cap \mathcal C_{\mbox{\em\tiny c}}(X)$, we have
 \[ \l(uf,v) = \l(u,fv) + \l(f,uv). \]
\item Let $v,u_1, u_2, \ldots, u_m \in 
\F\cap \mathcal C_{\mbox{\em\tiny c}}(X)$ and $u = (u_1, \ldots, u_m)$. If $\Phi \in C^2(\R^m)$, then $\Phi(u), \Phi_{x_i}(u) \in \F_{\mbox{\em{\tiny{loc}}}}(X)\cap L^\infty_{\mbox{\em \tiny loc}}(X,\mu)$ and
\begin{align*}
\l(\Phi(u),v) = \sum_{i=1}^{m} \l(u_i, \Phi_{x_i}(u) v).
\end{align*}
\end{itemize}
\end{definition}

\begin{definition} \label{def-adapt}
We say that the form $(\e,D(\e))$ is adapted
to $(\e^{\mbox{\em \tiny s}},\F)$ if $\e$ is local,
its domain $D(\e)$ is $\F$ and:
\begin{enumerate}
\item The form $\e$ satisfies 
$$\forall\,f,g\in \mathcal F,\;\;|\e(f,g)|\le C\|f\|_\F\|g\|_\F,$$
and, for all $f,g\in \F$ with $fg\in \F_{\mbox{\em\tiny{c}}}$,
$$|\e(fg,1)|+|\e(1,fg)|\le C\|f\|_\F\|g\|_\F.$$
\item The symmetric bilinear form 
$\e^{\mbox{\em\tiny sym}}(f,g)-
\e^{\mbox{\em\tiny sym}}(fg,1)$, extended by continuity to $\F$, 
is equal to the model form $\e^{\mbox{\em\tiny s}}$. 
\item The skew part $\e^{\mbox{\em\tiny{skew}}}$ 
is a chain rule skew form relative 
to $\F$. 
 \end{enumerate}
\end{definition}

\begin{definition} A symmetric bilinear form $Z$ is said to be a zero order form
adapted to $\F$ if it is defined on $\F$ and satisfies 
$$Z(f,g)=Z(fg,1), \quad f,g\in \F, fg\in \F_{\mbox{\em\tiny c}},$$
and 
$$|Z(f,g)|\le C \|f\|_\F\|g\|_\F.$$
\end{definition}
Since $(\e^{\mbox{\tiny s}},\F)$ is fixed throughout, we will simply say 
that $(\e,D(\e))$ is an adapted form and 
that $Z$ is an adapted symmetric zero order form.  Note that if $\e$ is
an adapted form then its symmetric zero order part $Z_\e(f,g)=
\e^{\mbox{\tiny sym}}(fg,1)$ is a zero order form adapted to $\F$.
Further, $\e= \e^{\mbox{\tiny s}}+\e^{\mbox{\tiny skew}}+Z_\e$.

\subsection{Quantitative assumptions on the forms}
We now introduce the fundamental quantitative assumptions on the 
bilinear forms for which we will study weak solutions of the heat equation
with Dirichlet boundary condition.   

\begin{assumptionA} \label{as3:e_t} 
The form  $(\e,D(\e))$ is a bilinear form on $L^2(X,\mu)$ which is adapted to 
the model form $(\e^{\mbox{\em\tiny s}},\F)$.
Let $C_0=C_0(\e)$ 
be the constant in 
the sector condition 
$|\e^{\mbox{\em\tiny skew}}(f,g)|\le C_0 \|f\|_\F\|g\|_\F$. 
Assume further that:
\begin{enumerate}
\item
There are constants $C_2(\e), C_3(\e) \in [0,\infty)$ 
so that for all $f \in \F$ 
with $f^2 \in \F_{\mbox{\emph{\tiny{c}}}}$,
\begin{align} \label{ZEA}
\left| \e^{\mbox{\emph{\tiny{sym}}}}(f^2,1) \right|
\leq 2 \left(\int f^2 d\mu \right)^{\frac{1}{2}} \left(C_2(\e) \int d\Gamma(f,f) + C_3(\e) \int f^2 d\mu \right)^{\frac{1}{2}}
\end{align}
\item
There is a constant $C_5(\e) \in [0,\infty)$ such that for all $f \in \F$, 
$g \in \F_{\mbox{\emph{\tiny{c}}}} \cap L^{\infty}(X)$,
\begin{align} \label{skewA}
 \left| \e^{\mbox{\emph{\tiny{skew}}}}(f,fg^2) \right|
\leq & 2 \left( \int f^2 d\Gamma(g,g) \right)^{\frac{1}{2}} \left( C_5(\e) \int f^2 g^2 d\mu \right)^{\frac{1}{2}}.
\end{align}
\end{enumerate}
Set 
 \[ C_8(\e) := C_2(\e)+ C_3(\e)^{1/2}+ 
C_5(\e). \]
\end{assumptionA}

\begin{remark}  Under Assumption A
the form $(\e,\F)$ is closed and 
satisfies $$\forall\,f\in \F,\;\;\;\e(f,f)\ge -\alpha\|f\|_2^2,$$ with $\alpha$ 
depending only on $C_2(\e),C_3(\e)$.  
In particular, the form $(\e,\F)$ induces a continuous semigroup of 
bounded operators $P_t$ on $L^2(X,\mu)$. 
We let $(L,D(L))$ denote the infinitesimal 
generator of this semigroup. By the results of \cite{MaRock}, it is immediate 
that $P_t$ is positivity preserving.
\end{remark}
\begin{remark}
 For the purpose of this work, it is essential to compare 
Assumption A to Assumptions 0-1-2 of \cite{LierlSC2}. 
\begin{enumerate}
\item It is plain that any form $\e$ 
satisfying Assumption A also satisfy 
Assumptions 0-1-2 of \cite{LierlSC2} with respect to the model form 
$(\e^{\mbox{\tiny s}},\F)$.  Regarding Assumption 2 of \cite{LierlSC2}, see 
\cite[Remark 1.15(iv)]{LierlSC2}.
\item  Given a model form $(\e^{\mbox{\tiny s}},\F)$, 
forms satisfying Assumption A are less general than
the forms allowed by Assumptions 0-1-2 of \cite{LierlSC2}. 
To understand this, compare Assumption A(ii) with  
\cite[Assumption 1(iii)]{LierlSC2} and note that   
Assumption A(ii)  is the same as 
\cite[Assumption 1(iii)]{LierlSC2} with $C_4=0$.   
\end{enumerate}
\end{remark}

\begin{remark}
On Euclidean space, fix measurable bounded functions $a_{i,j}$, $b_i$, $d_i$, $c$,
set  $\F=D(\e)=W^1(\R^n)$ and
\begin{align*}
 \e(f,g) & = \int \! \sum_{i,j=1}^n  a_{i,j} \partial_i f \partial_j g \, dx
           + \int \sum_{i=1}^n b_i \partial_i f \, g \, dx
           + \! \int \sum_{i=1}^n f \, d_i \partial_i g \, dx
           + \int \! c f g \, dx.
\end{align*}
Set $\tilde a_{i,j} := (a_{i,j} + a_{j,i})/2$ and $\check a_{i,j} = (a_{i,j} - a_{j,i})/2$. Then the symmetric part of $\e$ is 
\begin{align*}
 \e^{\mbox{\tiny{sym}}}(f,g) 
 = & \int \sum_{i,j=1}^n \tilde a_{i,j} \partial_i f \partial_j g \, dx
           + \int \sum_{i=1}^n \frac{b_i+d_i}{2} \partial_i f \, g \, dx \\
   &        + \int \sum_{i=1}^n f \, \frac{b_i+d_i}{2} \partial_i g \, dx
       + \int c f g \, dx,
\end{align*}
while the skew-symmetric part of $\e$ is 
\begin{align*}
 \e^{\mbox{\tiny{skew}}}(f,g) 
 = & \int \sum_{i,j=1}^n \check a_{i,j} \partial_i f \partial_j g \, dx
           + \int \sum_{i=1}^n \frac{b_i-d_i}{2} \partial_i f \, g \, dx \\
   &        + \int \sum_{i=1}^n f \, \frac{-b_i + d_i}{2} \partial_i g \, dx.
\end{align*}
The symmetric part $\e^{\mbox{\tiny{sym}}}$ can be decomposed into its strongly local part
\begin{align*}
 \e^{\mbox{\tiny{s}}}(f,g) & = \sum_{i,j=1}^n \int \tilde a_{i,j} \partial_i f \partial_j g \, dx
\end{align*}
and its symmetric zero order part given by 
$$
\e^{\mbox{\tiny sym}}(fg,1) = 
\int \sum_{i=1}^n \frac{b_i+d_i}{2} \partial_i (fg)\, dx 
       + \int c f g \, dx .$$
Assume that $(\tilde{a}_{i,j})$ is uniformly elliptic 
and set 
$$\e^{\mbox{\tiny s}}(f,g)=
\int \sum_{i,j=1}^n \tilde a_{i,j} \partial_i f \partial_j g \,dx,\;\; f,g \in 
\F.$$
On the one hand, under these hypotheses, the form $\e$ satisfies  
\cite[Assumptions 0-1-2]{LierlSC2}. On the other hand, making the hypothesis  
that $\e$ is an adapted  form with  respect to $(\e^{\mbox{\tiny s}},\F)$ 
implies that the matrix $(a_{i,j})$ is symmetric, i.e., $(a_{i,j})=(\tilde{a}_{i,j})$.

Further, under these circumstances, the constants $C_2(\e),C_5(\e)$ 
can be taken  to be equal to $0$ if 
$b_i=d_i=0$ for all $i$ (i.e., if there is no drift term). 
The constant $C_8(\e)$ can be taken equal to $0$ if $b_i=d_i=c=0$. 
\end{remark} 

We will need the following simple Caccioppoli-type lemma. The proof is omitted.
\begin{lemma}\label{caccio} Let $(\e,\F)$ be a form satisfying 
\emph{Assumption A}. 
Let $u\in \F_{\mbox{\em\tiny loc}}$ and $\psi\in 
\F_{\mbox{\em\tiny c}} \cap L^\infty(X,\mu)$.
For any $k_1>0$, we have
$$-\e^{\mbox{\em\tiny s}}(u,u\psi^2)\le 4k_1\int u^2d\Gamma(\psi,\psi)-
\left(1-\frac{1}{k_1}\right)\int \psi^2d\Gamma(u,u).$$
Moreover, for any $k_1,k_2,k_3>0$,
\begin{eqnarray*}
-\e(u,u\psi^2)&\le& (4k_1+2k_2C_2+k_3)\int u^2 d\Gamma(\psi,\psi)\\
&&+\left(-1+\frac{1}{k_1}+2k_2C_2\right)\int\psi^2d\Gamma(u,u)\\
&& +\left(\frac{1}{k_2}+k_2C_3+\frac{C_5}{k_3}\right)\int u^2\psi^2d\mu.
\end{eqnarray*}
\end{lemma}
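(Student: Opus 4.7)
The plan is a direct if slightly bookkeeping-heavy calculation, based on the Leibniz and chain rules for the strictly local energy measure $d\Gamma$, together with the pointwise Cauchy--Schwarz $|d\Gamma(u,\psi)|\le \sqrt{d\Gamma(u,u)}\sqrt{d\Gamma(\psi,\psi)}$ and Young's inequality in several guises.

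First I would dispense with preliminaries: since $u \in \F_{\mbox{\tiny loc}}$ and $\psi \in \F_{\mbox{\tiny c}}\cap L^\infty$, a local representative of $u$ in $\F$ near the compact support of $\psi$ exhibits $u\psi^2$ as an element of $\F_{\mbox{\tiny c}}\cap L^\infty$; hence $\e(u,u\psi^2)$, $\e^{\mbox{\tiny sym}}((u\psi)^2,1)$, $\e^{\mbox{\tiny skew}}(u,u\psi^2)$ and the various integrals against $d\Gamma$ are all well-defined. For the first inequality, the Leibniz and chain rules for the strictly local form give the pointwise identity
\[ d\Gamma(u, u\psi^2) \;=\; \psi^2\, d\Gamma(u,u) + 2 u\psi\, d\Gamma(u,\psi). \]
Integrating, and absorbing the cross term by pointwise Cauchy--Schwarz followed by Young's inequality $2AB\le A^2/k_1 + k_1 B^2$ applied to $A = |\psi|\sqrt{d\Gamma(u,u)}$, $B = |u|\sqrt{d\Gamma(\psi,\psi)}$, yields the first bound after rearrangement.

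For the second inequality, I would decompose $\e = \e^{\mbox{\tiny s}} + Z_\e + \e^{\mbox{\tiny skew}}$, where $Z_\e(f,g) := \e^{\mbox{\tiny sym}}(fg,1)$ is the symmetric zero-order part, and bound $-\e(u,u\psi^2)$ summand by summand. The $-\e^{\mbox{\tiny s}}(u,u\psi^2)$ piece is handled by the first inequality. For the symmetric zero-order piece, $Z_\e(u,u\psi^2) = \e^{\mbox{\tiny sym}}((u\psi)^2,1)$: apply Assumption A(i) with $f=u\psi$ (noting $(u\psi)^2\in\F_{\mbox{\tiny c}}$), and estimate the factor $\int d\Gamma(u\psi,u\psi)$ by Leibniz plus Cauchy--Schwarz to get $\le 2\int \psi^2 d\Gamma(u,u)+2\int u^2 d\Gamma(\psi,\psi)$; Young's inequality $2\sqrt{X}\sqrt{Y}\le k_2 Y + X/k_2$ with parameter $k_2$ then splits the resulting product of square roots across the three desired integrals (with the $\int u^2\psi^2 d\mu$ term producing the $k_2 C_3$ coefficient). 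For the skew piece, Assumption A(ii) applies directly with $f=u$, $g=\psi$, and one further Young's inequality with parameter $k_3$ distributes that product.

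Finally I would add the three contributions and collect the coefficients of $\int u^2 d\Gamma(\psi,\psi)$, $\int\psi^2 d\Gamma(u,u)$, and $\int u^2\psi^2 d\mu$. The only genuine obstacle is the routine tracking of constants through the three applications of Young's inequality so that the coefficient of $\int\psi^2 d\Gamma(u,u)$ lands as $-1+1/k_1+2k_2 C_2$ (receiving contributions from $\e^{\mbox{\tiny s}}$ and $Z_\e$) while the coefficients of the other two integrals come out as stated; nothing deep is needed beyond the calculus for strictly local energy measures and the quantitative hypotheses \eqref{ZEA}--\eqref{skewA}.
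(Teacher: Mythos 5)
Your argument is correct and is precisely the (omitted) intended proof: decompose $\e=\e^{\mbox{\tiny s}}+\e^{\mbox{\tiny skew}}+Z_\e$, use Leibniz/chain rule with pointwise Cauchy--Schwarz and Young for the strictly local part, Assumption A(i) with $f=u\psi$ for the zero-order part, and Assumption A(ii) with $f=u$, $g=\psi$ for the skew part; the coefficients collect exactly as stated. Note that your first estimate actually yields the sharper coefficient $k_1$ in place of $4k_1$, which of course implies the stated bound.
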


\subsection{Local weak solutions}
Consider an adapted form $(\e,\F)$. 
Let $V $ be an open set. Recall that
\begin{align*}
 \F(V)   &=  \{ u \in \F_{\mbox{\tiny{loc}}}(V) : \int_V |u|^2 d\mu + \int_V d\Gamma(u,u) < \infty \}.
\end{align*}

\begin{definition}
Let $V $ be open and $f \in \F_{\mbox{\em\tiny{c}}}(V)'$, 
the dual space of $ \F_{\mbox{\em\tiny{c}}}(V)$ (identify $L^2(X,\mu)$ with its dual space using the scalar product). A function $u: V \to \R$ is a 
\emph{local weak solution} of the Laplace equation $-Lu = f$ in $V$, if 
\begin{enumerate}
\item $u \in \F_{\mbox{\em \tiny{loc}}}(V)$,
\item  For any function $\phi \in \F_{\mbox{\em \tiny{c}}}(V), \ \e(u,\phi) = \int f \phi \, d\mu$.
\end{enumerate} 
\end{definition}

For a time interval $I$ and a Hilbert space $H$, let $L^2(I \to H)$ be the Hilbert space of those functions $v: I \to H$ such that 
 \[  \Vert v \Vert_{L^2(I \to H)} = \left( \int_I \Vert v(t) \Vert_H^2 \, dt \right)^{1/2} < \infty. \]
Let $\mathcal W^1(I \to H) \subset L^2(I \to H)$ be the Hilbert space of those functions $v: I \to H$ in $L^2(I \to H)$ whose distributional time derivative $v'$ can be represented by functions in $L^2(I \to H)$, equipped with the norm
 \[  \Vert v \Vert_{\mathcal W^1(I \to H)} = \left( \int_I \Vert v(t) \Vert_H^2 + \Vert v'(t) \Vert_H^2 \, dt \right)^{1/2} < \infty. \]
Let
 \[ \F(I \times X) = L^2(I \to \F) \cap \mathcal W^1(I \to \F'), \]
where $\F'$ denotes the dual space of $\F$.
Let
 \[ \F_{\mbox{\tiny{loc}}}(I \times V) \] 
be the set of all functions $u:I \times V \to \R$ such that for any open interval $J$ that is relatively compact in $I$, and any open subset $A$ relatively compact in $V$, there exists a function $u^{\sharp} \in \F(I \times X)$ such that $u^{\sharp} = u$ a.e. in $J \times A$.
Let 
\[ \F_{\mbox{\tiny{c}}}(I \times V) = \{ u \in \F_{\mbox{\tiny{loc}}}(I \times V): u \textrm{ has compact support in } I\times V  \}. \]

\begin{definition}
Let $I$ be an open interval and $V$ an open set in $X$. 
Set $Q = I \times V$. A function $u: Q \to \R$ is a \emph{local weak solution} of the heat equation $\frac{\partial}{\partial t} u = L u$ in $Q$, if
\begin{enumerate}
\item
$u \in \F_{\mbox{\emph{\tiny{loc}}}}(Q)$,
\item 
For any open interval $J$ relatively compact in $I$,
\begin{align} \label{eq:loc weak sol}
 \forall \phi \in \F_{\mbox{\emph{\tiny{c}}}}(Q),\  \int_J \int_V \frac{\partial}{\partial t} u \, \phi \, d\mu \, dt + \int_J \e(u(t,\cdot),\phi(t,\cdot)) dt = 0.
\end{align}
\end{enumerate} 
\end{definition}

\begin{remark}
Assuming that the intrinsic distance satisfies (A1)-(A2), 
an equivalent definition of a local weak solution of $\frac{\partial}{\partial t} u = L u$ on $Q = I \times V$ is
\begin{enumerate}
\item
$u \in L^2(I \to \F )$,
\item
For any open interval $J$ relatively compact in $I$,
\begin{align*}
 -\int_J \int_V \frac{\partial}{\partial t} \phi \, u \, d\mu \, dt + \int_J \e(u(t,\cdot),\phi(t,\cdot)) dt = 0,
\end{align*}
for all $\phi \in \F(Q)$ with compact support in $J \times V$.
\end{enumerate}
See \cite{ESC}. The argument uses the existence of good cut-off functions
provided by (A1)-(A2).
\end{remark}

\subsection{Local weak solutions with Dirichlet boundary condition 
along $\partial U$}

To define weak solutions with Dirichlet boundary condition, we 
use Definition \ref{F0UV} where 
the space $\F^0_{\mbox{\tiny loc}}(U,V)$ is introduced.

\begin{definition}
Let $V,U$ be open with $V \subset U$. A function $u: V \to \R$ is a 
\emph{local weak solution} of the Laplace equation $-Lu = f$ in $V$
with 
\emph{Dirichlet boundary condition} along $\partial U$ if 
\begin{enumerate}
\item $u$ is a local weak solution of $-Lu=f$ in $V$ and
\item 
$u \in \F^0_{\mbox{\em \tiny{loc}}}(U,V)$.
\end{enumerate} 
\end{definition}

Next we fix an open interval $I$ and an open set $V$ in a domain $U$ in $X$
and define the notion of a local weak solution in $I\times V$
with Dirichlet boundary condition along the boundary of $U$.  
Recall that $\F^0(U)$ is the closure of $\F_{\mbox{\tiny c}}(U)$ for the norm
$(\int_U|f|^2d\mu+\int_Ud\Gamma(f,f))^{1/2}$. Define
\[\F^0(I\times U)= L^2(I \to \F^0(U)) \cap \mathcal W^1(I \to (\F^0(U))'). \] 
For $Q=I\times V$, define $\F^0_{\mbox{\tiny loc}}(U,Q)$ to be the set of all 
functions $v: Q\rightarrow \mathbb R$ such that, for any open interval 
$J\subset I$ relatively compact in $I$ and any open subset $W\subset V$ 
relatively compact in 
$\widetilde{U}$ with $d_U(W, U \setminus V)>0$, there exists a function 
$v^{\sharp}$ in $\F^0(I\times U)$ such that $u^{\sharp}=u$ a.e. in $J\times W$.

\begin{definition}
Let $I$ be an open interval and $V$ an open set in $X$. 
Set $Q = I \times V$. We say that a function $u: Q \to \R$ is a \emph{local weak solution} of the heat equation $\frac{\partial}{\partial t} u = L u$ in $Q$ 
with Dirichlet boundary condition along $\partial U$ if  
\begin{enumerate}
\item $u$ is a local weak solution of the heat equation in $Q$  and
\item 
$u \in \F^0_{\mbox{\emph{\tiny{loc}}}}(U,Q)$.
\end{enumerate}
\end{definition}

\section{Harnack inequalities}

Harnack inequalities play an essential and central role in the results 
obtained in this paper. The next two subsections discuss interior Harnack 
inequalities and boundary Harnack inequalities, respectively. 

In this section, 
we consider a fixed  open subset $Y$ of $X$. 
We assume that the model form $(\e^{\mbox{\tiny{s}}},\F)$, defined in Section \ref{sec-geo}, satisfies (A1)-(A2-$Y$).

\subsection{Interior Harnack inequalities}
For any $s \in \R$, $\tau > 0$, $\delta \in (0,1)$ and $B(x,2r) \subset Y$, 
define
\begin{align*}
 I &= \big(s - \tau r^2, s \big) \\
 B &= B(x,r) \\
 Q &= I \times B \\
 Q_-  &= \big( s - (3+\delta)\tau r^2/4, s - (3-\delta)\tau r^2/4 \big) \times \delta B \\
 Q_+  &= \big( s - (1+\delta)\tau r^2/4, s \big) \times \delta B.
\end{align*}

\begin{definition} \label{def:HI}
Let  $(\e,\F)$  be an adapted form. 
\begin{itemize}
\item We say that
$(\e,\F)$ satisfies the \emph{parabolic Harnack inequality} on $Y$ if, 
for any $\tau > 0$, $\delta \in (0,1)$, there exists a constant 
$H_Y(\tau, \delta) \in (0,\infty)$ such that, for any ball $B(x,2r) \subset Y$, any $s \in \R$, and any positive local weak solution $u$ of the heat equation
$\frac{\partial}{\partial t}u = Lu$ in $Q$, 
the following inequality holds.
\begin{align} \label{eq:parabolic HI}
\sup_{z \in Q_-} u(z)  \leq  H_Y \inf_{z \in Q_+} u(z) \tag{PHI}
\end{align}
Here both the supremum and the infimum are essential, i.e.,
computed up to sets of measure zero.
\item We say that the parabolic Harnack inequality holds locally in $Y$ 
if for each $y\in Y$ there is a neighborhood $V$ of $y$ in $Y$ such that 
{\em (PHI)} holds in $V$ (in this case, the constant $H_V$ may indeed depend on $V$). 
\item We say that the parabolic Harnack inequality holds up to scale $R$ in $Y$ 
there is a constant $H_Y(R)$ such that 
{\em (PHI)} holds in any ball $B(y,2R)$, $y\in Y$, 
with  constant $H_{B(y,2R)}$ bounded above by $H_Y(R)$ . 
\end{itemize}
\end{definition}
The parabolic Harnack inequality implies the \emph{elliptic Harnack inequality},
\begin{align}
\sup_{z \in B(x,r)} u(z) \leq H'_Y \inf_{z \in B(x,r)} u(z), \tag{EHI}
\end{align}
where $u$ is any positive function in $\F_{\mbox{\tiny{loc}}}(Q)$ with 
$Lu = 0$ weakly in $B(x,2r)$.
Recall also that (PHI) implies the H\"older continuity of local weak solutions.

The following theorem gathers fundamental known results regarding 
the parabolic Harnack inequality. 

\begin{theorem} 
Let $X,Y,\e^{\mbox{\em \tiny s}},\F,d,\mu$ be as in {\em Section \ref{sec-geo}}.
In particular, we assume that {\em (A1)-(A2-Y)} holds true.
Let $(\e,\F)$ be a form satisfying \emph{Assumption A}.
\begin{enumerate}
\item  The symmetric strongly local regular
Dirichlet form $(\e^{\mbox{\em \tiny s}},\F)$
satisfies {\em (PHI)} on $Y$ if and only if 
it satisfies  the volume doubling property and the Poincar\'e inequality on $Y$.
\item  The symmetric strongly local regular 
Dirichlet form $(\e^{\mbox{\em \tiny s}},\F)$
satisfies {\em (PHI)} locally (resp.\ up to scale $R$) on $Y$ if and only if 
it satisfies  the volume doubling property and the Poincar\'e inequality locally 
(resp.\ up to scale $R$) on $Y$.
\item If the model form 
$(\e^{\mbox{\em \tiny s}},\F)$ satisfies {\em (PHI)} locally in $Y$ then 
the form $(\e,\F)$ satisfies {\em (PHI)} locally in $Y$.
\item If the model form 
$(\e^{\mbox{\em \tiny s}},\F)$ satisfies {\em (PHI)} locally up 
to scale $R<\infty$ 
in $Y$ with constant $H(\e^{\mbox{\em\tiny s}},R)$ 
then $(\e,\F)$ satisfies {\em (PHI)} up to scale $R<\infty$  in $Y$ with constant $H(\e,R)$ depending only on $H(\e^{\mbox{\em\tiny s}},R)$, the constants
$C_1(\e)$--$C_5(\e)$ and an upper bound on 
$C_8(\e)R^2$.

\end{enumerate}
\end{theorem}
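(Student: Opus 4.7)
The four parts split naturally. Parts (i)--(ii) concern only the symmetric model form $(\e^{\mbox{\tiny s}},\F)$, while parts (iii)--(iv) transfer (PHI) from the model form to any adapted form satisfying Assumption A.

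For part (i), I would invoke the classical equivalence (PHI) $\iff$ (VD)+(PI) for symmetric, strictly local, regular Dirichlet forms on $(X,d,\mu)$ under (A1)-(A2-$Y$). The easy direction (PHI) $\Rightarrow$ (VD)+(PI) is obtained by plugging the fundamental solution (or the heat kernel two-sided bounds, which are themselves consequences of (PHI)) into the definitions. The hard direction is Sturm's theorem in \cite{SturmI,SturmII} (cf.\ \cite{SturmIII}); its proof uses Moser iteration driven by a standard Caccioppoli inequality for $\e^{\mbox{\tiny s}}$ together with an $L^2$--mean-value inequality and the Poincar\'e--Sobolev inequality derived from (VD)+(PI). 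For part (ii), I would simply localize Sturm's argument: since (VD), (PI) and (PHI) are all statements about balls $B(x,2r)$ strictly inside the region of interest, the proof in \cite{SturmII} runs verbatim in each neighborhood $Y(x)$ (for the local case), or uniformly in $B(x,2R)$ for $x\in Y$ (for the up-to-scale case), with constants depending only on the data inside those balls.

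For parts (iii)--(iv), the plan is to reduce to the main results of the companion paper \cite{LierlSC2}. The first remark to make is that any form $(\e,\F)$ satisfying Assumption A also satisfies Assumptions 0-1-2 of \cite{LierlSC2}, as noted explicitly in the Remark following Assumption A. Consequently, the non-symmetric parabolic Harnack inequality proved in \cite{LierlSC2} applies, and gives (PHI) for $(\e,\F)$ on any region where the symmetric form enjoys (PHI). The argument in \cite{LierlSC2} is a Moser iteration for sub- and super-solutions of the non-symmetric heat equation, using the Caccioppoli-type Lemma \ref{caccio} of the present paper (whose constants depend only on $C_2(\e), C_3(\e), C_5(\e)$) to control $\int \psi^2\,d\Gamma(u,u)$, and exploiting both the sector condition (constant $C_0$) and the first-order structure of $\e^{\mbox{\tiny skew}}$ from Definition \ref{def-firstorder} to handle the skew part via an integration-by-parts that transfers derivatives off $u$ and onto a cutoff $\psi$.

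The main technical obstacle, and the point requiring care, is the constant tracking in part (iv). In Moser iteration on a ball of radius $r\le R$, the zero-order contributions coming from $C_3(\e)$ and $C_5(\e)$ in Lemma \ref{caccio} enter the recursion as an additive error of size $C_8(\e) r^2 \le C_8(\e) R^2$ per step; since this is an additive perturbation of the $L^2$-norm that is dominated in the iteration provided $C_8(\e)R^2$ is bounded, the exponential of a quantity controlled by $C_8(\e) R^2$ enters $H(\e,R)$, while everything else is absorbed into constants depending on $C_1(\e),\ldots,C_5(\e)$ and on $H(\e^{\mbox{\tiny s}},R)$. Thus the final constant $H(\e,R)$ has exactly the dependence claimed, completing (iv). Part (iii) follows from (iv) applied locally around each $y\in Y$ at a scale on which the symmetric form enjoys (PHI).
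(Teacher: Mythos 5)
Your proposal follows essentially the same route as the paper, which proves this theorem purely by citation: parts (i)--(ii) are the Grigor'yan--Saloff-Coste--Sturm characterization of (PHI) by volume doubling and Poincar\'e (localized in the obvious way), and parts (iii)--(iv) are special cases of \cite[Theorem 2.13]{LierlSC2}, made applicable by the observation that Assumption A implies Assumptions 0-1-2 of \cite{LierlSC2}. Your additional sketch of the Moser-iteration mechanism and of the constant tracking via $C_8(\e)R^2$ is consistent with what those references actually do, so the proposal is correct.
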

\begin{remark}
The first two  statements of this theorem are the Dirichlet form version of the 
characterization of the parabolic Harnack inequality by  volume doubling 
and Poincar\'e inequality. See \cite{Gri91,SC92,SturmI,SturmII,SturmIII}.

Statements (iii)-(iv) are variations on the key fact that 
the parabolic Harnack inequality for the model form 
$(\e^{\mbox{\tiny s}},\F)$ implies  (PHI) for a wide variety 
of other forms in the spirit of the original work of Nash, Moser 
and Aronson and Serrin.  The proof is contained in 
\cite{LierlSC2,SturmII,SturmIII}.  In particular, (iii)-(iv) are special cases of  \cite[Theorem 2.13]{LierlSC2} which covers a wider class of 
forms, namely, forms satisfying \cite[Assumptions 0-1-2]{LierlSC2}.  
\end{remark}

\subsection{Boundary Harnack principle}

Let $(\e,\F)$ be an adapted form satisfying Assumption A.
Let $U$ be a domain in $X$. 
The boundary Harnack principle is concerned with positive 
local weak solutions of 
$Lu=0$ with Dirichlet boundary condition along $\partial U$ 
and their behavior near the boundary. We refer the reader to \cite{Aik}
for pointers to the literature. 


We will use a  strong version of the boundary Harnack principle
which we refer to as
the geometric boundary Harnack principle.

\begin{definition}
Let $X,\e^{\mbox{\em \tiny s}},\F,d,\mu$ be as in {\em Section \ref{sec-geo}}.
Let $ W\subset U$ be non-empty domains in $X$. 
Let  $(\e,\F)$  be a form satisfying 
{\em Assumption A}. Referring to 
local weak solutions of $Lu=0$ with Dirichlet boundary condition along 
$\partial U$ where $L$ is the generator associated to $(\e,\F)$, we say that: 
\begin{enumerate}
\item 
the \emph{geometric boundary Harnack principle} holds on $U$, 
if there exist constants $a_0,A_0,A_1\in (0,\infty)$, depending only on 
$U$, with the following property. Let $\xi \in \widetilde{U}\setminus U$ 
and $r \in (0,a_0 \mbox{ \em diam}_U (U))$. Then
for any two positive weak solutions $u$ and $v$ of $Lu=0$
in $B_U(\xi,A_0r)$ with Dirichlet boundary 
condition along $\partial U$, we have
 \[  \frac{u(x)}{u(x')} \leq A_1 \frac{v(x)}{v(x')}, \quad \forall x, x' \in 
B_U(\xi,r). \]
\item
the \emph{geometric boundary Harnack principle} holds locally 
near  $W$ if, for every compact set $K\subset W^{\sharp}\setminus W$, 
there exist 
$A_0(K),A_1(K)$ and $R(K)>0$ such that for any $\xi \in K$, $r \in (0,R(K))$  
and any two positive 
weak solutions $u$ and $v$ of $Lu=0$
in $B_U(\xi,A_0(K)r)$ with Dirichlet boundary 
condition along $\partial U$, we have
 \[  \frac{u(x)}{u(x')} \leq A_1(K) \frac{v(x)}{v(x')}, \quad \forall x, x' \in 
B_U(\xi,r). \]
\item the \emph{geometric boundary Harnack principle} holds up to 
scale $R$ near $W$ if we can take $A_0(K)=A_0$, $A_1(K)=A_1$ and $R(K)=R$ in the 
previous statement.
\end{enumerate}
\end{definition}

The following theorem follows immediately from  
\cite[Theorem 4.2]{LierlSC1}. 

\begin{theorem} \label{thm2:bHP for u}
Fix $R>0$.
Let $X,\e^{\mbox{\em \tiny s}},\F,d,\mu$ be as in {\em Section \ref{sec-geo}}.
Let $(\e,\F)$ be a form  satisfying {\em Assumption A}.
Let $W\subset U$ be  domains in $X$. Assume further that:
\begin{enumerate}
\item $(\e,\F)$ is a (possibly non-symmetric) Dirichlet form. 
\item The volume doubling property and the Poincar\'e 
inequality hold up to scale $R$ in $\overline{W}$.  
\item The domain $U$ is locally  $(c_u,C_u)$-inner uniform up to scale $R$ 
near $W$.
\end{enumerate}
Then there exist constants $a_0\in (0,1)$, $A_0,A_1 \in (1,\infty)$ 
such that for any 
$\xi \in W^{\sharp}\setminus W$, $0<r<a_0R$, 
and any two non-negative local weak solutions $u,v$ of $ L u = 0$ in 
$B_{U}(\xi,A_0r)$ with weak Dirichlet boundary condition along
$\partial U$, we have
\[  \frac{u(x)}{u(x')} \leq A_1 \frac{v(x)}{v(x')}, \]
for all $x, x' \in B_{U}(\xi,r)$. 

The constants $a_0,A_0$ depend only on the local 
inner uniformity constants $c_u,C_u$ near $W$.
The constant $A_1$ depends only on the inner uniformity constants $c_u, C_u$, 
an upper bound on the
volume doubling constant and the Poincar\'e inequality constant up to scale $R$ 
on $\overline{W}$, 
the constants $C_0(\e)$--$C_5(\e)$ from {\em Assumption A} which give control over the skew-symmetric part and the killing part of the Dirichlet form $\e$, 
and an upper bound on $C_8(\e) R^2$, 
\end{theorem}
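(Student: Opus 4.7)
The plan is to reduce the statement directly to the main boundary Harnack principle of the companion paper \cite{LierlSC1}; essentially nothing new has to be proved here beyond matching hypotheses and tracking constants.

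First, I would verify that Assumption A on the form $(\e,\F)$ is at least as strong as the set of hypotheses under which the main theorem of \cite{LierlSC1} is proved. By the remark following Assumption A, any form satisfying Assumption A also satisfies Assumptions 0--1--2 of \cite{LierlSC2}, with the specialization $C_4=0$ that is built into Assumption A(ii). The sector condition together with the quantitative controls (ZEA) and (skewA) supply the constants $C_0(\e), C_2(\e),\dots,C_5(\e)$ that \cite{LierlSC1} needs to track the skew-symmetric and zero-order parts of $\e$.

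Next, I would match the geometric hypotheses. Hypothesis (ii) gives volume doubling and the weak Poincar\'e inequality on $\overline{W}$ up to scale $R$, which by the theorem in Section 4.1 (parts (i)--(iv)) upgrades to a scale-invariant interior parabolic Harnack inequality up to scale $R$ for the adapted form $(\e,\F)$, with PHI constant depending on $H(\e^{\mbox{\tiny s}},R)$, on $C_0(\e)$--$C_5(\e)$, and on an upper bound for $C_8(\e) R^2$. Hypothesis (iii) is exactly the local inner uniformity condition used in \cite{LierlSC1}; in particular, it provides the corkscrew points $\xi_r$ through Lemma \ref{lem:x_r} (and the remark following Definition \ref{def:R_xi}).

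With scale-invariant interior PHI and local inner uniformity both available up to scale $R$ near $W$, the main theorem of \cite{LierlSC1} applies on each ball $B_U(\xi, A_0 r)$ with $\xi \in W^{\sharp}\setminus W$ and $r < a_0 R$, and delivers constants $a_0, A_0, A_1$ satisfying the claimed comparison $u(x)/u(x') \le A_1 v(x)/v(x')$ for all $x,x'\in B_U(\xi,r)$. The stated dependencies --- $a_0, A_0$ on $c_u, C_u$ only, and $A_1$ additionally on the doubling and Poincar\'e constants, on $C_0(\e)$--$C_5(\e)$, and on an upper bound for $C_8(\e) R^2$ --- can be read off by inspecting the proof in \cite{LierlSC1}. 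The main (and essentially only) obstacle is the bookkeeping needed to confirm that the scale $R$ enters the final constant $A_1$ only through the dimensionless combination $C_8(\e) R^2$; this is a standard rescaling argument, noting that after scaling space by $R$ the lower order terms of $\e$ contribute at order $C_8(\e) R^2$ while all interior geometric estimates become scale-invariant.
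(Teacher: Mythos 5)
Your proposal matches the paper's own treatment: the paper proves this theorem simply by declaring that it "follows immediately from the main result of \cite{LierlSC1}," and your hypothesis-matching (Assumption A implying Assumptions 0--1--2 of \cite{LierlSC2}, doubling plus Poincar\'e up to scale $R$ giving the interior PHI, local inner uniformity supplying the geometric input) is exactly the implicit content of that citation. The constant bookkeeping you describe is consistent with the dependencies stated in the theorem itself, so no further argument is needed.
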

The following theorem is a direct consequence of Theorem \ref{thm2:bHP for u}
and the various definitions. 
\begin{theorem} \label{th-BHP1}
Let $X,\e^{\mbox{\em \tiny s}},\F,d,\mu$ be as in {\em Section \ref{sec-geo}}.
Let $(\e,\F)$ be a form satisfying {\em Assumption A}.    
Assume further that $(\e,\F)$ is a Dirichlet form. 
Let $U$ be a domain in $X$.    
\begin{enumerate}
\item Fix a domain $W\subset U$, 
and assume that $U$ is locally inner uniform near $W$. Assume 
also that the volume doubling property and the Poincar\'e inequality hold locally in $\overline{W}$. 
Then the geometric  boundary Harnack principle holds locally in $U$ near $W$.
\item  Fix $R\in (0,\infty]$ and a domain $W\subset U$.  
Assume that $U$ is locally $(c_u,C_u)$-inner uniform up to scale $R$ near $W$
and that the volume doubling
property and Poincar\'e inequality hold up to scale $R$ in $\overline{W}$. 
Then there exists $a_0>0$  such that 
the geometric boundary Harnack principle holds locally up to scale 
$r$ near $W$ for all $r< a_0R$ with $C_8(\e)r^2<\infty$.
\item  Assume that $U$ is inner uniform 
and that the volume doubling
property and Poincar\'e inequality hold in $X$. Assume further that 
$\e=\e^{\mbox{\em \tiny s}}$. Then the
the geometric boundary Harnack principle holds true in $U$. 
\end{enumerate}
\end{theorem}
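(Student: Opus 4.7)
The plan is to deduce each of the three statements directly from Theorem \ref{thm2:bHP for u}, which gives the quantitative BHP ``up to scale $R$'' whenever $U$ is locally $(c_u,C_u)$-inner uniform up to scale $R$ near $W$ and $(\e^{\mbox{\tiny s}},\F)$ satisfies the volume doubling property and Poincar\'e inequality up to scale $R$ on $\overline{W}$. Each of (i), (ii), (iii) then amounts to feeding the right scale $R$ and the right constants into that theorem and repackaging the conclusion in accordance with the definitions of the three flavors of the geometric boundary Harnack principle.

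For (ii) the statement is essentially a transcription. Assumption A is in force; $U$ is locally $(c_u,C_u)$-inner uniform up to scale $R$ near $W$; and $(\e^{\mbox{\tiny s}},\F)$ satisfies (VD) and (PI) up to scale $R$ on $\overline{W}$. Theorem \ref{thm2:bHP for u} then yields absolute constants $a_0 \in (0,1)$ and $A_0 \in (1,\infty)$, depending only on $c_u,C_u$, so that for every $\xi \in W^\sharp\setminus W$ and every $r \in (0,a_0 R)$ the BHP ratio inequality holds on $B_U(\xi,r)$ with a constant $A_1$ depending on the remaining data and on an upper bound for $C_8(\e)r^2$. Reading this off uniformly over compact $K\subset W^\sharp\setminus W$ and $r<a_0 R$ with $C_8(\e)r^2<\infty$ gives exactly the geometric BHP ``locally up to scale $r$ near $W$'' as defined.

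For (i) I would reduce to (ii) by a compactness argument. Fix a compact $K\subset W^\sharp\setminus W$. Because $U$ is locally inner uniform near $W$, for each $\xi\in K$ we have $R(U,\xi)>0$; Remark \ref{rem-R>} then provides a neighborhood $N_\xi$ of $\xi$ in $W^\sharp$ and a constant $\rho_\xi>0$ so that $R(U,\xi')\ge \rho_\xi$ for all $\xi'\in N_\xi$. Analogously, local (VD) and (PI) on $\overline{W}$ furnish, at each point of $K$, a neighborhood on which doubling and Poincar\'e hold with uniform constants up to some positive scale. Extract a finite subcover of $K$, let $R(K)>0$ be the minimum of the associated scales and let $c_u,C_u$ and the doubling/Poincar\'e constants be the worst of those appearing in the cover. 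By possibly shrinking to a subdomain $W'\subset W$ with $\overline{W'}$ contained in the union of the chosen neighborhoods and still having $K\subset (W')^\sharp\setminus W'$, we reach a configuration meeting the hypotheses of (ii) with scale $R(K)$. Applying (ii) yields the constants $A_0(K),A_1(K)$ and $R(K)$ required in the definition of the locally valid geometric BHP.

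For (iii) one observes that $\e=\e^{\mbox{\tiny s}}$ is the strictly local symmetric model form, so the skew-symmetric part vanishes, giving $C_0=C_5=0$, and the symmetric zero-order part vanishes as well, so $C_2=C_3=0$; in particular $C_8(\e)=0$. Combined with inner uniformity of $U$ in the global sense (so that $R(U,\xi)\simeq \mathrm{diam}_\Omega(U)$ for every $\xi\in\widetilde U$, by Remark following Definition \ref{def:R_xi}) and global (VD) and (PI) on $X$, we may apply Theorem \ref{thm2:bHP for u} with $R=\tfrac{1}{2}\mathrm{diam}_U(U)$ (or any finite $R$ if $U$ is unbounded) and the constraint $C_8(\e)R^2<\infty$ is automatic. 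The resulting constants $a_0,A_0,A_1$ depend only on the global inner uniformity constants and on the doubling/Poincar\'e constants, matching the definition of the global geometric BHP on $U$.

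The only genuinely non-trivial step is the compactness extraction in (i); the ingredients for it, namely Remark \ref{rem-R>} and the local formulation of (VD) and (PI), are already available, so the argument is routine but needs to be written out carefully to make sure the finite cover produces simultaneously a uniform inner uniformity scale and uniform doubling/Poincar\'e constants over a common neighborhood of $K$.
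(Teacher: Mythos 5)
Your proposal is correct and follows essentially the same route as the paper, which offers no written proof beyond the remark that the theorem is a direct consequence of Theorem \ref{thm2:bHP for u} and the definitions of the three flavors of the geometric boundary Harnack principle; your treatment of (ii) and (iii) is exactly that reduction, and the compactness extraction you sketch for (i) is the standard way to pass from the local hypotheses to a uniform scale over a compact subset of $W^{\sharp}\setminus W$.
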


\subsection{Harmonic profiles}
The main idea developed in \cite{GyryaSC} in the context of strongly local 
symmetric Dirichlet forms is that the Dirichlet heat kernel
in a domain $U$ can be estimated in terms of the harmonic profile $h_U$.
In this section we extend the notion of harmonic profiles 
and gather some of 
their key properties. 

\begin{definition} \label{def2:e^D_U}
For an open subset $U \subset X$
and under {\em Assumption A},
consider the bilinear form $\e^D_U$  
\[ \e^D_U(f,g) = \e(f,g),  \quad f,g \in \F^0(U), \]
where the domain $D(\e^D_U) = \F^0(U)$ is the closure of the space 
$\F_{\mbox{\em\tiny c}}(U)$ in the norm 
$(\e^{\mbox{\em \tiny s}}(f,f) + \Vert f \Vert_2)^{\frac{1}{2}}$.
\end{definition}
Under Assumption A, the form $(\e^D_U,D(\e^D_U))$ is closed, bounded below,
local and regular.

\begin{definition}
Let $X,\e^{\mbox{\em \tiny s}},\F,d,\mu$ be as in {\em Section \ref{sec-geo}}.
Let $(\e,\F)$ be a form satisfying {\em Assumption A} and $\e(f,f)\ge 0$ for all 
$f\in \F$.   
Let $U$ be a domain in $X$. 
A function $h\in L^2_{\mbox{\em\tiny loc}}(U)$  
is called an \emph{$\e$-harmonic profile} in $U$ if it satisfies the 
following properties:
\begin{enumerate}  
\item $h$ is a weak solution of $Lu=0$ in $U$;
\item $h\in \mathcal F^0_{\mbox{\em\tiny loc}}(U)$;
\item $h>0$ in $U$. 
\end{enumerate}
Fix a domain $W\subset U$. We say that a function $h$ in 
is a \emph{$(U,W)$-profile for $\e$} if $h$ is defined in $W$ and
\begin{enumerate}  
\item  $h$ is a weak solution of $Lu=0$ in $W$;
\item $h\in \mathcal F^0_{\mbox{\em\tiny loc}}(U,W)$;
\item $h>0$ in $W$. 
\end{enumerate}  
\end{definition}

\begin{proposition} 
Let $X,\e^{\mbox{\em \tiny s}},\F,d,\mu$ be as in {\em Section \ref{sec-geo}}.
Let $(\e,\F)$ be a form satisfying {\em Assumption A}  
and which is a Dirichlet form. Fix domains  $W\subset U$. 
Assume that the volume doubling property and Poincar\'e inequality  
hold locally on $\overline{U}$.   
\begin{enumerate} 
 \item  Assume that $U$   
is unbounded and  inner uniform near $W$. Then 
there exists a function $h$ which is a local weak  
solution of $Lh=0$ in $U$ and is a $(U,W)$-profile. 
\item If $U$ is unbounded and locally inner uniform it admits a 
harmonic profile $h$.  
\item If $U$ is bounded, inner uniform, $x_0\in U$, and 
$W \subset U\setminus B_U(x_0,\epsilon)$ then
the Green function $h(x)=G_{U}(x,x_0)$ is a $(U,W)$-profile.
\end{enumerate}
\end{proposition}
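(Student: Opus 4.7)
My strategy is exhaustion by Dirichlet Green-type functions together with a compactness argument driven by the interior parabolic Harnack inequality (PHI) and the geometric boundary Harnack principle of Theorem \ref{th-BHP1}.

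Part (iii) is the most direct. Because $U$ is bounded and inner uniform, the form $(\e^D_U,\F^0(U))$ has a spectral gap and an ultracontractive Dirichlet semigroup (by the volume doubling and Poincar\'e hypotheses together with Assumption A), so the Green operator $G_U=\int_0^\infty e^{-tL_U}\,dt$ admits a measurable kernel $G_U(\cdot,\cdot)$, and for $x_0\in U$ the function $G_U(\cdot,x_0)$ lies in $\F^0(U)$ away from any neighborhood of $x_0$. Testing against $\psi\in\F_{\mbox{\tiny c}}(W)$ supported off $x_0$ yields $\e(G_U(\cdot,x_0),\psi)=0$, so $h:=G_U(\cdot,x_0)$ is a local weak solution of $Lh=0$ in $W$. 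Positivity on $W$ follows from positivity of the Dirichlet semigroup combined with the interior Harnack inequality applied in $U\setminus\{x_0\}$, and membership in $\F^0_{\mbox{\tiny loc}}(U,W)$ is read off Lemma \ref{lem:2.46} applied to admissible cutoffs supported in $W^{\sharp}$.

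For (i) and (ii) I would fix a reference point $y_0$ (in $W$ for (i), in $U$ for (ii)), choose an increasing exhaustion $\{U_n\}$ of $U$ by relatively compact inner uniform open subsets with $y_0\in U_1$, and build on each $U_n$ a positive function $h_n$ playing the role of a truncated profile. Two natural choices work: take $h_n$ as the renormalization of $G_{U'_n}(\cdot,z_n)$ for an auxiliary bounded inner uniform domain $U'_n\supset\overline{U_n}$ and a pole $z_n\in U'_n\setminus\overline{U_n}$ drifting outward, or solve the mixed Dirichlet problem with value $1$ on the inner surface $\partial U_n\cap U$ and value $0$ on $\partial U\cap\overline{U_n}$. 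After normalizing $h_n(y_0)=1$, the interior PHI combined with the geometric boundary Harnack principle (Theorem \ref{th-BHP1}) gives uniform two-sided bounds on $h_n$ on every compact subset $K\subset W^{\sharp}$ (in (i)) or $K\subset\widetilde U$ (in (ii)), with constants independent of $n$. Interior and boundary H\"older continuity (from PHI and the results of \cite{LierlSC1}) yield equicontinuity, so Arzel\`a--Ascoli extracts a subsequence converging locally uniformly to a continuous positive limit $h$; weak convergence in $\F_{\mbox{\tiny loc}}$ and Caccioppoli energy bounds from Lemma \ref{caccio} ensure $Lh=0$ weakly.

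The principal obstacle is transferring the Dirichlet boundary condition to the limit. Using Lemma \ref{lem:2.46} as a characterization, I need $\phi h\in\F^0(U)$ for every bounded $\phi\in\F(U)$ with $d\Gamma(\phi,\phi)/d\mu\in L^\infty$ and compact support in $W^{\sharp}$ (respectively in $\widetilde U$). Applying Lemma \ref{caccio} with cutoff $\phi$ to each $h_n$ produces uniform $\F$-norm bounds on $\phi h_n\in\F^0(U)$; weak closedness of $\F^0(U)$ then passes the membership to the limit along the convergent subsequence. A subsidiary technical point is to ensure that the constants in the boundary Harnack principle do not degenerate along the exhaustion; this is guaranteed by the remark following Definition \ref{def:R_xi}, which shows that $R(\Omega,\xi)$ is locally comparable to $R(\Omega,\xi')$ for nearby $\xi'$, so the Harnack constants remain uniformly controlled on each fixed compact subset of $W^{\sharp}$.
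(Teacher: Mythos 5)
Your proposal is correct and follows essentially the same route as the paper: part (iii) is handled directly through the Dirichlet Green function (the paper simply cites \cite[Lemma 3.9]{LierlSC1}), while parts (i)--(ii) are obtained as limits of normalized Green functions over an exhaustion, with uniform control from the interior Harnack inequality and the boundary Harnack principle and the Dirichlet condition passed to the limit via Caccioppoli bounds and weak closedness of $\F^0(U)$, exactly the scheme of \cite[Theorem 4.16]{GyryaSC} that the paper invokes. Your treatment fills in more of the details (equicontinuity, Arzel\`a--Ascoli, non-degeneration of the local boundary Harnack constants) but is the same argument.
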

\begin{proof}  
Note that $G_U$
denotes the Green function in $U$ with Dirichlet boundary condition, i.e., 
the Green function for the form $(\e^D_U, D(\e^D_U))$. 
The third statement follows immediately from \cite[Lemma 3.9]{LierlSC1}.
See also \cite[Lemma 4.7]{GyryaSC} and \cite[Lemma 3.10]{LierlSC1}.
Note that applying the results of \cite{LierlSC1} requires assuming
that the form $\e$ is a (possibly non-symmetric) Dirichlet forms. We 
can extend these results to the general case using the results 
obtained at the end of the next section.

The idea of the proof of (i)-(ii) is to construct the profile $h$ has a 
limit of normalized Green functions. The details follow the 
same line of reasoning as in \cite[Theorem 4.16]{GyryaSC} with simple 
adaptations using Assumption A to take care of the fact that the form $\e$ 
is not symmetric. \end{proof}

\begin{proposition} \label{pro-hcontrol}
Let $X,\e^{\mbox{\em \tiny s}},\F,d,\mu$ be as in {\em Section \ref{sec-geo}}.
Let $(\e,\F)$ be a form satisfying {\em Assumption A}  
and which is a Dirichlet form. 
Fix $R>0$. Let $W\subset U$ be domains in $X$.  
Assume that the volume doubling property and 
the Poincar\'e inequality hold locally up to scale $R$ on $\overline{W}$ 
and that $U$ is locally inner uniform up to scale $R$ near $W$.
Let $h$ be a $(U,W)$-profile. Then there are constants
$K_0,K_1$ such that for any inner ball $B_U(x,r)$ with 
$0< K_0r <R$, $B_U(x,K_0 r)\subset W^{\sharp}$, we have
$$ \forall\,y\in B_U(x,r),\;\;  h(y)\le K_1 h(x_r) $$
where 
$x_r$ is any point  with $d_U(x,x_r)=r/4$ and $d(x_r, X\setminus W)\ge c_u r/8$.
The constants $K_0$ depend only on the local 
inner uniformity constants $c_u,C_u$. The constant $K_1$ depends only on $c_u,C_u$,the doubling and Poincar\'e constants
up to scale $R$ in $\overline{W}$,
the constants $C_0$--$C_5$ which give control over the skew-symmetric part 
and the killing part of the Dirichlet form $\e$ and  
an upper bound on $C_8(\e) R^2$.
\end{proposition}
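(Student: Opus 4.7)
The plan is to combine the geometric boundary Harnack principle (Theorem~\ref{thm2:bHP for u}) with the interior elliptic Harnack inequality for $\e$-harmonic functions --- which holds up to scale $R$ in $\overline{W}$ by transferring (PHI) from $(\e^{\mbox{\tiny s}},\F)$ to $(\e,\F)$ under Assumption~A --- together with a chaining argument based on local inner uniformity. I will fix $K_0 = K_0(c_u, C_u, A_0)$ large enough that the hypothesis $B_U(x, K_0 r) \subset W^{\sharp}$ guarantees (i) every ball $B_U(\xi, A_0 \rho)$ with $\xi \in \widetilde U \setminus U$ within inner distance $4 r$ of $x$ and $\rho \leq 16 r / c_u$ lies in $W^{\sharp}$, so that Theorem~\ref{thm2:bHP for u} applies there; and (ii) $W^{\sharp}$ contains every chain of balls of radius $c_u r / 32$ needed to connect two points of inner distance $O(r)$ and depth $\geq c_u r / 16$ from $\partial U$, so that elliptic Harnack can be iterated on such chains.

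Next I split $y \in B_U(x, r)$ into two cases. \emph{Interior case:} if $\delta(y) \geq c_u r / 16$, both $y$ and $x_r$ have depth $\gtrsim r$ and $d_U(y, x_r) \leq 5 r / 4$; by local inner uniformity and volume doubling one can join them by a chain, of length bounded in $c_u,C_u,D_{\overline{W}}$, of balls of radius $c_u r / 32$ on which elliptic Harnack applies, yielding $h(y) \leq C_1 h(x_r)$. \emph{Boundary case:} if $\delta(y) < c_u r / 16$, pick $\xi \in \widetilde U \setminus U$ with $d_U(y, \xi) \leq 2 \delta(y) < c_u r / 8$ and set $\rho = 16 r / c_u$. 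By Lemma~\ref{lem:x_r} applied to $B_U(\xi, \rho)$ there is a point $\xi_\rho$ with $d_U(\xi, \xi_\rho) = \rho/4$ and $\delta(\xi_\rho) \geq c_u \rho / 8 = 2 r$. Both $y$ and $\xi_\rho$ lie in $B_U(\xi, \rho)$, so Theorem~\ref{thm2:bHP for u} yields
\[ \frac{h(y)}{h(\xi_\rho)} \leq A_1 \frac{v(y)}{v(\xi_\rho)} \]
for any non-negative local weak solution $v$ of $L v = 0$ in $B_U(\xi, A_0 \rho)$ with weak Dirichlet boundary condition along $\partial U$. The interior case applied to $(\xi_\rho, x_r)$ gives $h(\xi_\rho) \leq C_1 h(x_r)$, so it remains to exhibit one such $v$ with $v(y)/v(\xi_\rho) \leq C_2$.

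The main obstacle is this last step. A clean route is to take $v = G_{B_U(\xi, A_0 \rho)}(\cdot, z_*)$, the Dirichlet Green function of the subdomain $B_U(\xi, A_0 \rho)$ with pole $z_*$ placed slightly outside $B_U(\xi, \rho)$ at depth comparable to $\rho$; two-sided Green function estimates (a consequence of (PHI) and doubling) combined with elliptic Harnack at $z_*$ deliver the required $C_2$. An alternative used in \cite{GyryaSC} and \cite{LierlSC1} is to iterate Theorem~\ref{thm2:bHP for u} along a staircase of boundary balls $B_U(\xi^{(j)}, \rho_j)$ moving from $y$ toward an interior point, each step contributing a factor $A_1$; local inner uniformity and doubling bound the number of steps uniformly. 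Either route uses only Assumption~A (not the symmetry of $\e$), and tracking constants along the chain shows that $K_0$ depends only on $c_u,C_u$ (and $A_0$), while $K_1$ depends only on $c_u,C_u$, the doubling and Poincaré constants on $\overline{W}$, the constants $C_0$--$C_5$ from Assumption~A, and an upper bound on $C_8 R^2$.
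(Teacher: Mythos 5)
Your proposal follows essentially the same route as the paper, whose proof is precisely to compare the ratios for $h$ and an appropriately chosen Green function via the boundary Harnack principle of Theorem~\ref{thm2:bHP for u} together with the Green function estimates of \cite[Lemmas 3.11--3.12]{LierlSC1} (supplemented by interior Harnack chaining). The only detail to tidy is the pole placement: for $G_{B_U(\xi,A_0\rho)}(\cdot,z_*)$ to be a weak solution on all of $B_U(\xi,A_0\rho)$ as Theorem~\ref{thm2:bHP for u} requires, the pole $z_*$ should be placed outside $B_U(\xi,A_0\rho)$ (e.g.\ by taking the Green function of a still larger inner ball), or the principle applied on a correspondingly smaller ball.
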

\begin{proof} Compare the ratios for $h$ and  an 
appropriately chosen Green function. The result follows as an 
immediate corollary of Theorem \ref{thm2:bHP for u}
and the Green function estimates obtained in \cite[Lemmas 3.11-3.12]{LierlSC1}.
See also \cite[Theorem 4.17]{GyryaSC}.
\end{proof}

The next two propositions are straightforward applications 
of Proposition \ref{pro-hcontrol}. The proofs follow the same line of reasoning 
as in \cite[Theorem 4.17]{GyryaSC} and are omitted.

\begin{proposition}[Unbounded domains]
Let $X,\e^{\mbox{\em \tiny s}},\F,d,\mu$ be as in {\em Section \ref{sec-geo}}.
Let $(\e,\F)$ be a form satisfying {\em Assumption A} and which is a 
Dirichlet form.  Fix an unbounded domain $U$ in $X$ and let $h$ be a
$\e$-harmonic profile for $U$.
\begin{enumerate}
\item Assume $U$ is locally inner uniform and that
the volume doubling property and the Poincar\'e inequality 
hold locally in a  neighborhood $Y$ of $\overline{U}$ in $X$.
Then for any compact $K\subset \widetilde{U}$ there exist $r_K,\epsilon_K>0$ 
and  $C_K$ such that for any $x\in K$, $r\in (0,r_K)$, we have
$$ \forall\,y\in B_U(x,r),\;\;  h(y)\le C_K h(x_r) $$
where $x_r\in B_U(x,r)$ is any point 
with $d_U(x,x_r)=r/4$ and $d(x_r, \partial U)\ge \epsilon_K r$. 
Further
$$V_{h^2}(x,r)=\int_{B_U(x,r)}h^2d\mu\simeq h(x_r)^2V(x,r).$$

\item Fix $R>0$ and assume $U$ is locally $(c_u,C_u)$-inner uniform up to 
scale $R$
and that the volume doubling property and the Poincar\'e inequality 
hold up to scale $R$ in $\overline{U}$.
Then there exists 
$a_0,A_1\in (0,\infty)$ such that for any $x\in \widetilde{U}$, $r\in (0,a_0 R)$, we have
$$ \forall\,y\in B_U(x,r),\;\;  h(y)\le A_1 h(x_r) $$
where $x_r\in B_U(x,r)$ is any point 
with $d_U(x,x_r)=r/4$ and $d(x_r, \partial U)\ge  c_ur/8$. Further
  $$V_{h^2}(x,r)=\int_{B_U(x,r)} h^2d\mu\simeq h(x_r)^2V(x,r).$$
The constant $a_0$ depends only on $(c_u,C_u)$. The constant $A_1$
depends only on $c_u,C_u$, the volume doubling and Poincar\'e constant
up to scale $R$ in $\overline{U}$,
the constants $C_0$--$C_5$ which give control over the skew-symmetric part 
and the killing part of the Dirichlet form $\e$, and  
an upper bound on $C_8(\e) R^2$.
\end{enumerate}
\end{proposition}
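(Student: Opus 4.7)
The plan is to reduce the pointwise bound on $h$ to Proposition \ref{pro-hcontrol} in both parts, and then to derive the volume comparison by combining that pointwise bound with the interior (elliptic) Harnack inequality and volume doubling, following the lines of \cite[Theorem 4.17]{GyryaSC}. For part (ii), all the hypotheses of Proposition \ref{pro-hcontrol} already hold with $W = U$ on scale $R$, so choosing $a_0 \le 1/K_0$ yields directly $h(y) \le K_1 h(x_r)$ for every $x \in \widetilde U$, $y \in B_U(x,r)$, and $r < a_0 R$, which is the asserted inequality with $A_1 = K_1$.

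For part (i), the point is to promote the local-in-$\xi$ hypotheses to uniform-on-$K$ ones via compactness. By Remark \ref{rem-R>}, every $\xi \in K$ admits a scale $R_{\xi} > 0$ such that $U$ is $(c_u,C_u)$-inner uniform up to scale $R_{\xi}$ on the $d_U$-neighborhood $B_{\widetilde U}(\xi, R_{\xi})$, and the local volume doubling and Poincar\'e assumptions give, for each $\xi$, an analogous scale on a neighborhood. I would cover $K$ by finitely many such inner neighborhoods, take the minimum scale, and choose a relatively compact domain $W$ with $K \subset W^{\sharp}$ on which the hypotheses of Proposition \ref{pro-hcontrol} hold uniformly. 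Taking $r_K$ a small multiple of this common scale and $\epsilon_K = c_u/8$ then gives the pointwise bound with a constant $C_K = K_1$ depending only on $K$.

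With the pointwise inequality in hand, the upper bound in the volume comparison is automatic:
\[
V_{h^2}(x,r) = \int_{B_U(x,r)} h^2 \, d\mu \le C^2 \, h(x_r)^2 \, V(x,r).
\]
For the lower bound, the key geometric observation is that $d(x_r, \partial U) \ge c_u r/8$, so the metric $d$-ball $B(x_r, c_u r/16)$ is contained in $U$ (where $h$ is a positive $L$-harmonic weak solution), and since $d_U(x,x_r) = r/4$ it is also contained in $B_U(x,r)$. On that ball the elliptic Harnack inequality (EHI), itself a consequence of the (PHI) supplied by the doubling and Poincar\'e assumptions together with Assumption A, gives $h \ge c_H h(x_r)$, whence
\[
V_{h^2}(x,r) \ge c_H^2 \, h(x_r)^2 \, V\bigl(x_r, c_u r/16\bigr) \simeq h(x_r)^2 \, V(x,r),
\]
the last comparison being a routine application of volume doubling together with $d(x,x_r) \le d_U(x,x_r) = r/4$.

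The main obstacle will be the compactness step in part (i): one must carefully construct a relatively compact intermediate domain $W$ with $K \subset W^{\sharp}$ and extract a single scale and set of constants depending only on $K$, so that Proposition \ref{pro-hcontrol} applies with bounds uniform in $x \in K$. Once this book-keeping is done, both the pointwise bound and the volume comparison follow as outlined; part (ii) requires none of this and is a one-step application of Proposition \ref{pro-hcontrol}.
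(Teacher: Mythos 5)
Your proposal is correct and follows essentially the same route as the paper, which explicitly derives both parts as ``straightforward applications of Proposition \ref{pro-hcontrol}'' following the line of reasoning of \cite[Theorem 4.17]{GyryaSC} (the pointwise bound from the boundary Harnack control of the profile, the volume comparison from that bound plus the interior Harnack inequality and doubling). The compactness bookkeeping you flag for part (i) and the containment $B(x_r,c_ur/16)\subset B_U(x,r)\cap U$ used for the lower volume bound are exactly the standard steps the paper leaves to the reader.
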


\begin{proposition}[Bounded domains]
Let $X,\e^{\mbox{\em \tiny s}},\F,d,\mu$ be as in {\em Section \ref{sec-geo}}.
Let $(\e,\F)$ be a form satisfying {\em Assumption A} and which is a 
Dirichlet form.  Fix a bounded domain $U$ in $X$.
Fix $R\in (0, \frac{1}{2}\mbox{\em diam}_U)$ and assume $U$ is locally 
$(c_u,C_u)$-inner uniform at scale $R$
and that the volume doubling property and the Poincar\'e inequality 
hold up to scale $R$ in $\overline{U}$. 
Then there exist $A_0,A_1$ such that for any 
$\xi\in \widetilde{U}\setminus U$ and any 
$(U,B_U(\xi,R))$-profile  $h$,  we have, for all $x,r$ such that 
$B_U(x, A_0 r)\subset B_U(\xi,R)$,
$$  \forall\,y\in B_U(x,r),\;\;  h(y)\le A_1 h(x_r) $$
where $x_r\in B_U(x,r)$ is any point 
with $d_U(x,x_r)=r/4$ and $d(x_r, \partial U)\ge c_ur/8$.
Further
$$V_{h^2}(x,r)=\int_{B_U(x,r)}h^2d\mu\simeq h(x_r)^2V(x,r).$$
The constant $A_0$ depends only $c_u,C_u$. The constant $A_1$
depends only on $c_u,C_u$, the volume doubling and Poincar\'e constant
up to scale $R$ in $\overline{U}$,
the constants $C_0$--$C_5$ which give control over the skew-symmetric part 
and the killing part of the Dirichlet form $\e$, and  
an upper bound on $C_8(\e) R^2$.
\end{proposition}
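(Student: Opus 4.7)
The proposition is a direct analogue, in the bounded setting, of Proposition~\ref{pro-hcontrol}; accordingly the strategy is that of \cite[Theorem~4.17]{GyryaSC}, with the geometric boundary Harnack principle up to scale~$R$ (Theorem~\ref{thm2:bHP for u}) playing the central role. One pairs this with the Dirichlet Green function two-sided estimates of \cite[Lemmas~3.11--3.12]{LierlSC1} and the interior (elliptic) Harnack inequality that follows from (PHI) on $\overline U$.

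The plan for the pointwise bound $h(y)\le A_1 h(x_r)$ is as follows. Using Lemma~\ref{lem:x_r} applied inside $B_U(\xi,R)$, fix a reference point $z\in U$ with $d_U(\xi,z)\simeq R$ and $d(z,\partial U)\gtrsim c_u R$, and set $G:=G_U(\cdot,z)$. Away from $z$, $G$ is a positive local weak solution of $Lu=0$ with Dirichlet boundary condition along $\partial U$; by hypothesis the same is true of $h$ on all of $B_U(\xi,R)$. Provided $A_0 r\le a_0 R$ (with $a_0$ from Theorem~\ref{thm2:bHP for u}), applying the geometric boundary Harnack principle on $B_U(x,A_0 r)$ yields
\[
\frac{h(y)}{h(x_r)}\le A_1\,\frac{G(y)}{G(x_r)},\qquad y\in B_U(x,r).
\]
Since $d(x_r,\partial U)\ge c_u r/8$, the elliptic Harnack inequality applied to $G$ on the interior ball $B(x_r,c_u r/16)\subset U$, combined with the Green function estimates of \cite[Lemmas~3.11--3.12]{LierlSC1}, controls $G(y)/G(x_r)$ by a constant for all $y\in B_U(x,r)$, yielding the advertised pointwise inequality.

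The volume comparison $V_{h^2}(x,r)\simeq h(x_r)^2 V(x,r)$ follows quickly. The upper bound is obtained by integrating the pointwise estimate. For the lower bound, the interior elliptic Harnack inequality applied to $h$ on $B(x_r,c_u r/16)\subset U$ produces a constant $c>0$ with $h\ge c\,h(x_r)$ on this ball, while volume doubling up to scale~$R$, applied to the inclusion $B(x_r,c_u r/16)\subset B_U(x,r)$ of balls with comparable radii, gives $\mu(B(x_r,c_u r/16))\gtrsim V(x,r)$, which closes the lower bound.

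The main obstacle is quantitative Harnack chaining at small scales. When $A_0 r$ is much smaller than $R$, a single application of Theorem~\ref{thm2:bHP for u} cannot reach the reference point $z$; one must chain the boundary Harnack inequality along a $(c_u,C_u)$-inner uniform curve in $B_U(\xi,R)$ of length $\simeq R$, controlling both the number of steps and the accumulation of constants. Local inner uniformity up to scale~$R$ guarantees the existence of such curves with bounded length, and the dependence of $A_1$ on $c_u,C_u$, the doubling and Poincar\'e constants up to scale~$R$, the sector constants $C_0(\e)$--$C_5(\e)$, and an upper bound on $C_8(\e)R^2$ is tracked through each link in the chain exactly as in \cite[Theorem~4.17]{GyryaSC} and Proposition~\ref{pro-hcontrol}.
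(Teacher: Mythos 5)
Your overall strategy is the paper's: the statement is exactly Proposition \ref{pro-hcontrol} specialized to $W=B_U(\xi,R)$, and the paper proves that proposition by comparing $h$ with an ``appropriately chosen Green function'' via the geometric boundary Harnack principle (Theorem \ref{thm2:bHP for u}) together with the Green function estimates of \cite[Lemmas 3.11--3.12]{LierlSC1}. The volume comparison (integrate the pointwise bound for the upper estimate; interior Harnack plus doubling for the lower estimate) is also correct and standard.

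The gap is in your choice of comparison function. You take $G=G_U(\cdot,z)$ with the pole $z$ at distance $\simeq R$ from $\xi$, and you then need $G(y)/G(x_r)\le C$ for all $y\in B_U(x,r)$. The interior elliptic Harnack inequality on $B(x_r,c_ur/16)$ only controls $G$ on that small interior ball and says nothing about $y$ close to $\partial U$; and the estimates of \cite[Lemmas 3.11--3.12]{LierlSC1} concern the Green function of an inner ball with pole at distance comparable to the radius, not the global Green function with a distant pole restricted to a much smaller ball. In fact, for your $G$ the bound $G(y)\le C\,G(x_r)$ on $B_U(x,r)$ is precisely an instance of the proposition you are proving (since $G_U(\cdot,z)$ is itself a $(U,B_U(x,A_0r))$-profile away from $z$), so the argument as written is circular. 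Your proposed repair --- chaining the boundary Harnack inequality along an inner uniform curve of length $\simeq R$ --- does not close the gap either: the number of links needed to connect scale $r$ to scale $R$ grows like $\log(R/r)$ at best, so the accumulated constant is at least $(R/r)^{c}$ and blows up as $r\to0$, whereas $A_1$ must be independent of $r$. The correct choice is a Green function at the scale of the ball: if $x$ is within $O(r)$ of $\widetilde U\setminus U$, pick a boundary point $\zeta$ near $x$ and compare $h$ with $G_{B_U(\zeta,Kr)}(\cdot,\zeta_{K'r})$ for suitable $K>K'$, so that both functions are positive solutions vanishing along $\partial U$ on $B_U(\zeta,A_0r)$ away from the pole. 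A single application of Theorem \ref{thm2:bHP for u} at scale $r$ then gives $h(y)/h(x_r)\le A_1 G(y)/G(x_r)$, and \cite[Lemmas 3.11--3.12]{LierlSC1} yield directly $G(y)\le A\,r^2/V(\zeta_{K'r},r)$ for \emph{all} $y$ in the inner ball (including points near $\partial U$) and $G(x_r)\ge a\,r^2/V(\zeta_{K'r},r)$ because $d(x_r,\partial U)\ge c_ur/8$; hence $G(y)/G(x_r)\le A/a$ with no chaining. If instead $\delta_U(x)\gtrsim r$, a bounded interior Harnack chain inside $B_U(x,2r)$ suffices and the boundary Harnack principle is not needed at all.
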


\section{The $h$-transform technique} \label{sec:h-transform}
Throughout this section, we let 
$X,\e^{\mbox{\tiny s}},\F,d,\mu$ be as in Section \ref{sec-geo}
and fix a form $(\e,\F)$ satisfying  Assumption A. We 
also fix a domain $U$ in $X$. In addition, we fix a subdomain $W$ of $U$.
Recall that $W^{\sharp} $ is the largest open set in $\widetilde{U}$
whose intersection with $U$ is $W$. 

\begin{remark}
Any inner ball in $(\widetilde W,d_W)$ that lies in $W^{\sharp}$ is also an 
inner ball in $(\widetilde U,d_U)$. However, the metrics $d_W$ and $d_U$ 
do not necessarily coincide on $W^{\sharp}$.
\end{remark}

We assume the model form $(\e^{\mbox{\tiny{s}}},\F)$ satisfies (A1)-(A2-$\overline{W}$), and that the volume 
doubling property and the Poincar\'e inequality hold locally on  
$\overline{W}$ in $X$. 

\subsection{Some structural properties of $h$-transforms}

\begin{definition} \label{defEh}
Let $h$ be positive continuous on $W$.
Let $H: f\in L^2(W,h^2d\mu)\rightarrow L^2(W,d\mu)$,
$f\mapsto Hf= hf$.  Let $(\e^{D,W}_h,D(\e^{D,W}_h))$ be the form 
$$\e^{D,W}_h(f,g)=\e(Hf,Hg), \quad f,g\in H^{-1}(\F^0(W))=
D(\e^{D,W}_h).$$
\end{definition}

\begin{remark}\label{rem-defEh}
Dropping the reference to the Dirichlet condition and $W$ an 
writing $\e_h=\e^{D,W}_h$, observe that:
\begin{enumerate}
\item We have
$$\e_h^{\mbox{\tiny sym}}(f,g)
= \e^{\mbox{\tiny sym}}(hf,hg),\;\;
\e_h^{\mbox{\tiny skew}}(f,g)= \e^{\mbox{\tiny skew}}(hf,hg).$$
Further, for $f,g \in \mathcal F_{\mbox{\tiny{c}}}(W)\cap \mathcal C(W)$,
\begin{eqnarray*}
\e^{\mbox{\tiny s}}_h(f,g)&=&\e_h^{\mbox{\tiny sym}}(f,g) -\e_h^{\mbox{\tiny sym}}(fg,1)
\\
&=& \e^{\mbox{\tiny s}}(hf,hg)- \e^{\mbox{\tiny s}}(hfg,h) =\int h^2d\Gamma(f,g).
\end{eqnarray*}
Note that $\e^{\mbox{\tiny s}}_h$ must be understood as being the 
symmetric strongly local part of $\e_h$ which, in general,  
is not the same as the $h$ transform $(\e^{\mbox{\tiny s}})_h$ of 
$\e^{\mbox{\tiny s}}$. The two are the same exactly 
when $\e^{\mbox{\tiny s}}(hfg,h)=0$ for any $f,g\in \F_{\mbox{\tiny{c}}}(W)
\cap \mathcal C(W)$. 
This is the case when $h$ is a $(U,W)$-profile for $\e^{\mbox{\tiny s}}$.

\item Recall that $\l_h(f,g)=\frac{1}{2}(\e_h^{\mbox{\tiny skew}}(fg,1)+
\e_h^{\mbox{\tiny skew}}(f,g)).$ Since $\e$ is adapted,
$\l$ satisfies the  Leibniz rule  $\l(uf,v)=\l(u,fv)+\l(f,uv)$, 
$u,v,f\in \F_{\mbox{\tiny{c}}}(W)\cap \mathcal C(W)$.  Hence, we obtain
\begin{eqnarray*}
2\l_h(f,g)&=&  \l (hfg,h)+\r(hfg,h)+\l (hf,hg)+\r(hf,hg)\\
&=&  \l (hfg,h) -\l(h,hfg)+\l (hf,hg)-\l(hg,hf)\\
&=&  \l(f,gh^2) +\l (hg,hf)  -\l(h,hfg)\\&&+\l (h,hfg)+\l (f,h^2g)
-\l(hg,hf)\\
&=& 2\l(f,h^2g).
\end{eqnarray*}
This shows that $\l_h$ satisfies the appropriate Leibniz rule and chain rule
as in Definition \ref{def-firstorder}.

\item Under the additional assumption that $h$ is a $(U,W)$-profile for $\e$,
we have for $f,g\in \F_{\mbox{\tiny{c}}}(W) \cap \mathcal C (W)$,
$$\e_h^{\mbox{\tiny sym}}(fg,1)= \e^{\mbox{\tiny sym}}   (hfg,h)=
\e^{\mbox{\tiny sym}}   (h,hfg)=\e^{\mbox{\tiny skew}}   (hfg,h) $$
because $\e (h,hfg)=0$ under the present condition on $h,f,g$.
\item Assume that $h$ is an $\e^{\mbox{\tiny s}}$-$(U,W)$
profile.
Then, for all $f,g\in \F_{\mbox{\tiny{c}}}(W)\cap \mathcal C (W)$, we have
$\e^{\mbox{\tiny s}} (h,hfg)=0$. Hence, in this case,  
$$\e_h^{\mbox{\tiny sym}}(fg,1)=
\e^{\mbox{\tiny sym}}(h,hfg) = \e^{\mbox{\tiny sym}}(h^2fg,1).$$

\item  Assume that $h\in \F_{\mbox{\tiny loc}}$ is positive and continuous 
on $W$ and satisfies
$$\forall \,u\in \F_{\mbox{\tiny c}}(W),\;\;\e(f,u)= \gamma \int h u \, d\mu$$
for some $\gamma\in \mathbb R$.
Then, for all $f,g\in \F_{\mbox{\tiny{c}}}(W) \cap \mathcal C (W)$, we have  
$$\e_h^{\mbox{\tiny sym}}(fg,1)=
\e^{\mbox{\tiny sym}}(h,hfg) = \e^{\mbox{\tiny skew}}(hfg,h) +\gamma\int 
fg h^2d\mu.$$ 
\end{enumerate}
\end{remark}

\begin{definition} For a fixed $h$, positive and continuous on $W$,
let $$(\e^{D,W,h^2},D(\e^{D,W,h^2}))$$ be the Dirichlet form 
on $L^2(W,h^2d\mu)$
obtained by closing
$$\e^{D,W,h^2}(f,g)=\int h^2d\Gamma(f,g), \;\;f,g \in \F_{\mbox{\em\tiny c}}(W).$$
Let $\F^{h}=D(\e^{D,W,h^2})$ be the domain of this form, that is, 
the closure of $\F_{\mbox{\em\tiny c}}(W)$ for the norm
$$\|f\|_{\F^h}=\F^h_W
=\left(\int_W |f|^2 h^2d\mu +\int_W h^2d\Gamma(f,f)\right)^{1/2}.$$
\end{definition}
Note that, by definition,
$(\e^{D,W,h^2},\F^{h})$
is a symmetric strongly local regular Dirichlet form on $W$.

\begin{lemma} \label{prop:5.7 h unbd} Assume that $h$ is continuous 
positive on $W$.
Then the set 
$$H^{-1} \big(\F_{\mbox{\emph{\tiny{c}}}}(W) \cap L^{\infty}(W,\mu)\big)$$ 
is dense in the Hilbert space 
$$D(\e^{D,W}_{h}) = H^{-1} (\F^0(W)), \;\; \|f\|^2_{D(\e^{D,W}_h)}=
\int_W d\Gamma(hf,hf)+\int_Wh^2|f|^2d\mu $$ and
 \begin{eqnarray*}
 H^{-1}\big( \F_{\mbox{\emph{\tiny{c}}}}(W) \cap L^{\infty}(W,\mu) \big)  
&= & \F_{\mbox{\emph{\tiny{c}}}}(W) \cap L^{\infty}(W,\mu) \\
&=&  \F_{\mbox{\emph{\tiny{c}}}}(W) \cap L^{\infty}(W,h^2 \mu). \end{eqnarray*}
In particular, $\F_{\mbox{\emph{\tiny{c}}}}(W) \cap L^{\infty}(W,\mu) \big) $ is also dense in 
$D(\e^{D,W}_{h}) = H^{-1} (\F^0(W))$.
\end{lemma}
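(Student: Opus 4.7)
The plan hinges on observing that the map $H\colon f\mapsto hf$ of Definition \ref{defEh} is, once both spaces are equipped with their natural norms, a Hilbert space isometry from $D(\e^{D,W}_h)$ onto $\F^0(W)$. Indeed, by direct inspection of the norm written in the statement,
\[
\|f\|^2_{D(\e^{D,W}_h)} = \int_W h^2 |f|^2\, d\mu + \int_W d\Gamma(hf,hf) = \|hf\|_\F^2.
\]
Density is preserved under isometries, so the main density assertion reduces to two pieces: (a) the set equalities $H^{-1}\bigl(\F_{\mbox{\tiny c}}(W)\cap L^\infty(W,\mu)\bigr)=\F_{\mbox{\tiny c}}(W)\cap L^\infty(W,\mu)=\F_{\mbox{\tiny c}}(W)\cap L^\infty(W,h^2\mu)$, and (b) the density of $\F_{\mbox{\tiny c}}(W)\cap L^\infty(W,\mu)$ in $\F^0(W)$.

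For (a), since $h$ is continuous and strictly positive on $W$, every compact set $K\subset W$ admits constants $0<c_K\le h\le C_K<\infty$ on $K$. Any function whose essential support lies in $K$ is therefore bounded with respect to $\mu$ if and only if it is bounded with respect to $h^2\mu$, which settles the second equality. For the first equality one must verify that multiplication by $h$ and by $1/h$ maps $\F_{\mbox{\tiny c}}(W)\cap L^\infty(W,\mu)$ into itself: the $L^\infty$-bounds and the invariance of compact support are immediate from the pointwise estimates $c_K\le h\le C_K$ on $K$, while $\F$-membership follows via the Leibniz-type bound $d\Gamma(hf,hf)\le 2h^2\, d\Gamma(f,f)+2f^2\, d\Gamma(h,h)$ together with the symmetric argument applied to $f=(1/h)\cdot(hf)$. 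This uses that in every setting where this lemma is invoked $h\in\F_{\mbox{\tiny loc}}(W)$ with $\Upsilon(h,h)$ locally bounded on compact subsets of $W$, a regularity that comes from Corollary \ref{cor:2.22}, the interior Harnack estimate, and the Caccioppoli bound of Lemma \ref{caccio}.

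For (b), given $u\in\F^0(W)$ I would first pick $u_k\in\F_{\mbox{\tiny c}}(W)$ with $u_k\to u$ in the $\F$-norm (by definition of $\F^0(W)$), and then truncate at level $n$ by $T_n u_k=\max\{-n,\min\{u_k,n\}\}$. Because the symmetric strictly local part $(\e^{\mbox{\tiny s}},\F)$ is a Dirichlet form, truncation is a normal contraction, so $d\Gamma(T_n u_k,T_n u_k)\le d\Gamma(u_k,u_k)$ and $\|T_n u_k\|_\F\le \|u_k\|_\F$; a dominated convergence argument on the energy measure gives $T_n u_k\to u_k$ in $\F$-norm as $n\to\infty$, and a diagonal extraction yields a sequence in $\F_{\mbox{\tiny c}}(W)\cap L^\infty(W,\mu)$ converging to $u$ in $\F^0(W)$. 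The main obstacle I foresee lies in step (a): bare continuity and positivity of $h$ do not guarantee that $h$ acts as an $\F$-multiplier on compactly supported functions, and one must appeal to the additional local regularity of $h$ provided by the profile estimates established earlier in the paper. Once the isometric identification $H$ is secured, the remainder of the argument is essentially formal.
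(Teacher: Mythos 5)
Your argument is correct and follows essentially the same route as the paper: the paper also reduces everything to the unitarity of $H$ between $D(\e^{D,W}_h)$ and $\F^0(W)$, the standard density of $\F_{\mbox{\tiny{c}}}(W)\cap L^\infty(W,\mu)$ in $\F^0(W)$, and the set equality obtained from the fact that $h,1/h\in\F_{\mbox{\tiny{loc}}}(W)\cap L^\infty_{\mbox{\tiny{loc}}}(W,\mu)$ together with the algebra property of that space (which is exactly your Leibniz bound). Your closing caveat is well taken — the paper does silently use $h,1/h\in\F_{\mbox{\tiny{loc}}}(W)$ beyond the stated hypothesis of continuity and positivity — though for the algebra argument local boundedness of $\Upsilon(h,h)$ is not needed, only $\Gamma(h,h)(K)<\infty$ for compact $K$, which $h\in\F_{\mbox{\tiny{loc}}}(W)$ already gives.
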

\begin{proof}
We follow \cite[Proposition 5.7]{GyryaSC}. The set $\F_{\mbox{\tiny{c}}}(W) \cap L^{\infty}(W,\mu)$ is dense in the Hilbert space $\F^0(W)$. Since $H$ is a unitary operator between the Hilbert spaces $D(\e_{h})$ and $\F^0(W)$, it follows that $H^{-1}\big(\F_{\mbox{\tiny{c}}}(W) \cap L^{\infty}(W,\mu)\big)$ is also dense in the Hilbert space $D(\e_{h})$. Since $h$, $1/h$ are both in 
$\F_{\mbox{\tiny{loc}}}(W) \cap L^{\infty}_{\mbox{\tiny{loc}}}(W,\mu)$, 
the equality
 \[ H^{-1}\big( \F_{\mbox{\tiny{c}}}(W) \cap L^{\infty}(W,\mu) \big) 
    = \F_{\mbox{\tiny{c}}}(W) \cap L^{\infty}(W,\mu) \]
follows from the fact that $\F_{\mbox{\tiny{loc}}}(W) \cap L^{\infty}_{\mbox{\tiny{loc}}}(W,\mu)$ is an algebra. 
\end{proof}

\begin{lemma} \label{lem-compEh}
Assume  that $h$ is a $(U,W)$-profile for either  
$\e+\gamma \langle\cdot,\cdot\rangle$ 
or  $\e^{\mbox{\em\tiny s}}+\gamma\langle\cdot,\cdot\rangle$, 
$\gamma\in \mathbb R$.
Then there exists $C\in (0,\infty)$ such that, 
for $f,g\in \F_{\mbox{\em \tiny{c}}}(W) \cap L^\infty(W,\mu)$, we have
\begin{enumerate}
\item  
$\int_Wd\Gamma(hf,hf)\le C\left(\e^{D,W,h^2}(f,f)+\int_W |f|^2h^2d\mu\right).$
\item  
$|\e^{D,W}_h(f,f)|\le C\left(\e^{D,W,h^2}(f,f)+\int_W |f|^2h^2d\mu\right).$
\item 
$\e^{D,W,h^2}(f,f)\le C\left(\e^{D,W}_h(f,f)+ \int_W |f|^2 h^2d\mu\right).$
\end{enumerate}
In particular, $D(\e^{D,W}_h)=D(\e^{D,W,h^2})=\F^{h}$.
\end{lemma}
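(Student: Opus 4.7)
The plan is to derive all three inequalities from a single algebraic identity for the energy measure and the defining property of a profile. By the Leibniz (derivation) rule for $d\Gamma$ applied to the product $hf$,
\begin{equation*}
d\Gamma(hf,hf) \;=\; h^2\, d\Gamma(f,f) \;+\; d\Gamma(h,\,hf^{2}),
\end{equation*}
valid for $f\in\F_{\mbox{\tiny c}}(W)\cap L^\infty(W,\mu)$ (since $\F\cap L^\infty$ is an algebra, so $f^2$ is of the same type). After integration, the right-most term equals $\e^{\mbox{\tiny s}}(h, hf^{2})$, so the entire argument reduces to estimating this quantity using the profile property of $h$.

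The preliminary step is to justify $hf^{2}\in\F^{0}(U)$ so that the profile identity applies. Since $f^{2}$ is bounded, in $\F(U)$, compactly supported in $W\subset W^{\sharp}$, Lemma \ref{lem:2.46} (after a standard truncation of the carré du champ of $f^{2}$) together with $h\in\F^{0}_{\mbox{\tiny loc}}(U,W)$ gives $hf^{2}\in\F^{0}(U)$. Hence the profile identity $\e(h, hf^{2}) = -\gamma \int_W h^{2}f^{2}\, d\mu$ (with $\gamma=0$ in the $\e^{\mbox{\tiny s}}$-profile case) is available. Decomposing $\e=\e^{\mbox{\tiny s}}+\e^{\mbox{\tiny skew}}+Z_{\e}$ and noting that $Z_{\e}(h,hf^{2})=\e^{\mbox{\tiny sym}}(h^{2}f^{2},1)=Z_{\e}(hf,hf)$, I obtain
\begin{equation*}
\e^{\mbox{\tiny s}}(h,hf^{2}) \;=\; -\gamma\int h^{2}f^{2}\,d\mu \;-\;\e^{\mbox{\tiny skew}}(h,hf^{2})\;-\;Z_{\e}(hf,hf).
\end{equation*}

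Now \eqref{skewA} applied to the pair $(h,f)$ and a Cauchy-Schwarz with parameter $\epsilon$ yields
$|\e^{\mbox{\tiny skew}}(h,hf^{2})|\le\epsilon\int h^{2}d\Gamma(f,f)+(C_{5}/\epsilon)\int h^{2}f^{2}d\mu$,
while \eqref{ZEA} applied to $hf$ gives
$|Z_{\e}(hf,hf)|\le\epsilon' C_{2}\int d\Gamma(hf,hf)+(C_{3}\epsilon'+1/\epsilon')\int h^{2}f^{2}d\mu$.
Substituting into the integrated identity $\int d\Gamma(hf,hf)=\int h^{2}d\Gamma(f,f)+\e^{\mbox{\tiny s}}(h,hf^{2})$ and choosing $\epsilon'$ with $\epsilon' C_{2}<1/2$, I absorb the resulting $\int d\Gamma(hf,hf)$ term on the left and obtain (i). Inequality (ii) then follows from $|\e^{D,W}_h(f,f)|\le\int d\Gamma(hf,hf)+|Z_{\e}(hf,hf)|$ combined with (i) and \eqref{ZEA}. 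For (iii), I rewrite the same identity as $\int h^{2}d\Gamma(f,f)=\int d\Gamma(hf,hf)-\e^{\mbox{\tiny s}}(h,hf^{2})$, apply the same three bounds, and convert $\int d\Gamma(hf,hf)$ to $\e^{D,W}_h(f,f)$ via $\int d\Gamma(hf,hf)\le\e^{D,W}_h(f,f)+|Z_{\e}(hf,hf)|$, absorbing once more.

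Finally, inequalities (i) and (iii) show that the two norms
$\|f\|_{D(\e^{D,W}_h)}^{2}=\int d\Gamma(hf,hf)+\int h^{2}f^{2}d\mu$ and
$\|f\|_{\F^{h}}^{2}=\int h^{2}d\Gamma(f,f)+\int h^{2}f^{2}d\mu$
are equivalent on $\F_{\mbox{\tiny c}}(W)\cap L^\infty(W,\mu)$, a set that is dense in $D(\e^{D,W}_h)$ by Lemma \ref{prop:5.7 h unbd} and dense in $\F^{h}$ by definition. Passing to completions gives $D(\e^{D,W}_h)=D(\e^{D,W,h^{2}})=\F^{h}$. The main obstacle is the absorption bookkeeping in the presence of non-trivial skew-symmetric and killing pieces: the small parameters $\epsilon,\epsilon'$ in \eqref{skewA} and \eqref{ZEA} must be chosen so that positive fractions of both $\int d\Gamma(hf,hf)$ and $\int h^{2}d\Gamma(f,f)$ survive after absorption, and one must verify that the truncation/cutoff making $hf^{2}$ an admissible test function in the profile identity is compatible with all of these quantitative bounds uniformly in the approximation.
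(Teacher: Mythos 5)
Your proof is correct and follows essentially the same route as the paper's: the identity $\int d\Gamma(hf,hf)=\int h^2 d\Gamma(f,f)+\e^{\mbox{\tiny s}}(h,hf^2)$, the profile equation $\e(h,hf^2)=-\gamma\int h^2f^2d\mu$, and absorption via Assumptions A(i)--(ii) are exactly the ingredients used there. The only cosmetic difference is that the paper controls the cross term by quoting its Caccioppoli-type Lemma \ref{caccio} to bound $\int f^2 d\Gamma(h,h)$, whereas you inline the equivalent Leibniz-rule computation directly.
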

Note that  $\F_{\mbox{\tiny{c}}}(W) \cap L^\infty(W,\mu)$ is dense in the domains of both forms 
$\e^{D,W}_h$ and $\e^{D,W,h^2}$ so that the last statement follows from 
(ii)-(iii).
\begin{proof}  
We give the proof for (ii) and (iii) when $h$ is a $(U,W)$-profile for $\e$. 
The proof of (i) and  the cases when $h$ is a $(U,W)$-profile for $\e+\gamma\langle\cdot,\cdot\rangle$, $\gamma\neq 0$  or  $\e^{\mbox{\tiny s}}+\gamma\langle\cdot,\cdot\rangle$ are similar. To simplify notation, we set
$\e^{D,W}_h=\e_h$ and $\e^{D,W,h^2}=\e^{h^2}$ and we drop the explicit 
reference to the Dirichlet condition and the set $W$.
For any $f \in \F_{\mbox{\tiny{c}}}(W) \cap L^{\infty}(W,\mu)$, 
Assumption A(i) yields 
\begin{align*}
|\e_h(f,f) | &=|\e^{\mbox{\tiny sym}}(hf,hf)|
 \le  \int d\Gamma(hf,hf)  + |\e^{\mbox{\tiny{sym}}}(h^2 f^2,1)| \\
& \leq C \left( \int h^2 d\Gamma(f,f) + \int f^2 d\Gamma(h,h) + \int h^2 f^2 d\mu \right).
\end{align*}
The constant $C$ depends only on $C_2$, $C_3$.
Because $\e(h,hf^2)=0$, we get from Lemma \ref{caccio} 
that for any $k_1,k_2,k_3 > 0$,
\begin{align*}
 \left( 1 - \frac{1}{k_1} - 2k_2 C_2 \right) \int f^2 d\Gamma(h,h)
\leq & \left(4k_1 + 2k_2 C_2 + k_3 \right) \int h^2 d\Gamma(f,f)  \\
& + \left( \frac{1}{k_2} + k_2C_3 + \frac{C_5}{k_3} \right) \int f^2 h^2 d\mu.
\end{align*}
Hence (with a different $C$ depending only on $C_2,C_3, C_5$),
\begin{align*}
|\e_h(f,f)| 
& \leq C \left( \e^{h^2}(f,f) + \int h^2 f^2 d\mu \right).
\end{align*}
This proves (ii).

To prove (iii), we use the fact that,
for $f\in \F_{\mbox{\tiny{c}}}(W) \cap L^{\infty}(W,\mu)$,
 $\e(h,hf^2)=0$, and  Assumption A(ii) to obtain
\begin{align*}
\e^{h^2}(f,f) 
& = \e_{h}(f,f) - \e^{\mbox{\tiny{sym}}}(h^2 f^2,1) - \e^{\mbox{\tiny{s}}}(h,hf^2) \\
& = \e_{h}(f,f) + \e^{\mbox{\tiny{skew}}}(h,hf^2) \\
& \leq \e_{h}(f,f) + k_4 \int h^2 d\Gamma(f,f) + \frac{C_5}{k_4} \int f^2 h^2 d\mu,
\end{align*}
where $k_4 > 0$ is arbitrary. Choosing $k_4=1/2$, we get
\begin{align*}
\e^{h^2}(f,f)
& \leq 2 \left( \e_{h}(f,f) + 2C_5\int f^2 h^2 d\mu \right).
\end{align*}
\end{proof}

\begin{proposition} \label{prop:5.8 h} Assume that $h$ is continuous 
positive on $W$ and belongs to $\F^0(U,W)$.
The strongly local Dirichlet form $\big( \e^{D,W,h^2}, D\big(\e^{D,W,h^2}\big) \big)$ is regular on $(W^{\sharp}, h^2 d\mu)$ with core $\mbox{\emph{Lip}}_{\mbox{\em{\tiny{c}}}}(W^{\sharp},d_W)$.
\end{proposition}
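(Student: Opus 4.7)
The plan is to verify three defining properties of a regular Dirichlet core for the form $(\e^{D,W,h^2}, \F^h)$ on $L^2(W^\sharp, h^2\,d\mu)$:
(a) every $f \in \mathrm{Lip}_{\mathrm c}(W^\sharp, d_W)$ belongs to $\F^h = D(\e^{D,W,h^2})$;
(b) $\mathrm{Lip}_{\mathrm c}(W^\sharp, d_W)$ is uniformly dense in $C_{\mathrm c}(W^\sharp)$;
(c) $\mathrm{Lip}_{\mathrm c}(W^\sharp, d_W)$ is dense in $\F^h$ for the form norm $\|\cdot\|_{\F^h}$.

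For (a), fix $f \in \mathrm{Lip}_{\mathrm c}(W^\sharp, d_W)$ with Lipschitz constant $C_L$ and compact support $K \subset W^\sharp$. The standing hypotheses of this section --- (A1)-(A2-$\overline W$) and local volume doubling on $\overline W$ --- allow an application of Corollary \ref{cor:2.22} with $\Omega = W$, producing $f \in \F_{\mathrm{loc}}(W)$ with $\Upsilon(f,f) \leq C_L^2$ almost everywhere. Combined with the hypothesis $h \in \F^0_{\mathrm{loc}}(U,W)$, Lemma \ref{lem:2.46} (applied with $\Omega = U$, $V = W$, $g = h$, and $f$ as the test function) yields $hf \in \F^0(U)$. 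Since the essential support of $hf$ lies in $W$, this upgrades to $hf \in \F^0(W)$, so $f = H^{-1}(hf) \in H^{-1}(\F^0(W)) = D(\e^{D,W}_h)$, which coincides with $D(\e^{D,W,h^2}) = \F^h$ by Lemma \ref{lem-compEh}.

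Part (b) is the standard McShane construction in a length space: for $g \in C_{\mathrm c}(W^\sharp)$, the functions
\[
g_n(x) = \inf_{y \in W^\sharp}\bigl\{g(y) + n\, d_W(x,y)\bigr\}
\]
are $n$-Lipschitz with respect to $d_W$ and converge uniformly to $g$ on its support. Multiplying by a $d_W$-Lipschitz cutoff supported in a slightly enlarged compact subset of $W^\sharp$ (which exists because $W^\sharp$ is locally compact in $d_W$) produces the required approximants in $\mathrm{Lip}_{\mathrm c}(W^\sharp, d_W)$. For (c), Lemma \ref{prop:5.7 h unbd} already provides density of $\F_{\mathrm c}(W) \cap L^\infty(W, \mu)$ in $\F^h$ for the form norm, so it suffices to approximate any such $g$ by elements of $\mathrm{Lip}_{\mathrm c}(W^\sharp, d_W)$. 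The essential support of $g$ is compact in $W$, hence at positive $d$-distance from $X \setminus W$; the local doubling and Poincaré structure on $\overline W$ then produces $d_W$-Lipschitz approximants $g_n$ whose supports remain trapped in a single fixed compact set $K' \subset W$ and converge to $g$ in $\|\cdot\|_\F$. Since $h$ is continuous and strictly positive, it is bounded above and below on $K'$, and the norms $\|\cdot\|_\F$ and $\|\cdot\|_{\F^h}$ are equivalent on functions supported in $K'$; the convergence transfers accordingly.

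The main obstacle lies in step (a): exhibiting the product $hf$ as an element of $\F^0(W)$, rather than merely $\F(U)$, given that the support of $f$ may touch the Dirichlet-boundary portion $W^\sharp \setminus W$. This is exactly what the hypothesis $h \in \F^0_{\mathrm{loc}}(U,W)$ encodes through Lemma \ref{lem:2.46}, and the overall construction parallels \cite[Proposition 5.8]{GyryaSC}, with the Lipschitz input supplied by Corollary \ref{cor:2.22}.
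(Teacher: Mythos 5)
Your overall strategy coincides with the paper's: reduce regularity to showing that every $f\in \mbox{Lip}_{\mbox{\tiny{c}}}(W^{\sharp},d_W)$ lies in the form domain, get the energy bound from Corollary \ref{cor:2.22}, and feed $f$ and $h$ into Lemma \ref{lem:2.46}. However, step (a) has two genuine gaps. First, you apply Lemma \ref{lem:2.46} with $\Omega=U$, $V=W$, which (even granting that $f$ extends to an element of $\F(U)$ with bounded energy density --- you only establish $f\in\F_{\mbox{\tiny{loc}}}(W)$ via Corollary \ref{cor:2.22} applied on $W$) yields $hf\in\F^0(U)$, and you then assert that this ``upgrades'' to $hf\in\F^0(W)$ ``since the essential support of $hf$ lies in $W$.'' That justification is false as stated: the support of $f$ is a compact subset of $W^{\sharp}$, which in general meets $\widetilde{U}\setminus U$, so it is \emph{not} compactly contained in $W$; passing from $\F^0(U)$ to $\F^0(W)$ for such functions is precisely the kind of boundary statement that needs an argument (a cut-off near $U\setminus W$, say), not a remark. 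The paper sidesteps this entirely by applying Lemma \ref{lem:2.46} with $\Omega=W$ and an intermediate open set $V\subset W$ containing $\mathrm{supp}(f)\cap W$ with $\mathrm{supp}(f)\subset V^{\sharp}\subset W^{\sharp}$, so that the conclusion is directly $fh\in\F^0(W)$.

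Second, you identify $D(\e^{D,W}_h)$ with $D(\e^{D,W,h^2})$ by citing Lemma \ref{lem-compEh}, but that lemma assumes $h$ is a $(U,W)$-profile for $\e+\gamma\langle\cdot,\cdot\rangle$ or for $\e^{\mbox{\tiny s}}$; the proposition only assumes $h$ is continuous, positive and in $\F^0_{\mbox{\tiny loc}}(U,W)$, so the comparison of the two form norms is not available here. The paper instead concludes via Lemma \ref{prop:5.7 h unbd}. Your parts (b) and (c) are reasonable but partly redundant: the paper gets form-norm density essentially for free from the fact that $\big(\e^{D,W,h^2},D(\e^{D,W,h^2})\big)$ is by construction regular on $W$ (so $C_{\mbox{\tiny c}}(W)\cap D(\e^{D,W,h^2})$ is already dense in the domain), and only sup-norm density in $C_{\mbox{\tiny c}}(W^{\sharp})$ remains to be checked; your sketch for (c), approximating a general element of $\F_{\mbox{\tiny c}}(W)\cap L^\infty$ by Lipschitz functions in the $\F$-norm, is itself nontrivial and not actually needed.
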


\begin{proof}
We follow the proof of \cite[Proposition 5.8]{GyryaSC}.
As $\big( \e^{D,W,h^2}, D\big(\e^{D,W,h^2}\big) \big)$ is regular on $W$, 
$C_{\mbox{\tiny{c}}}( W ) \cap D\big(\e^{D,W,h^2}\big)$ is dense in $D\big(\e^{D,W,h^2}\big)$. So we only need to show that $C_{\mbox{\tiny{c}}}( W^{\sharp} ) \cap D\big(\e^{D,W,h^2}\big)$ is dense in $C_{\mbox{\tiny{c}}}(W^{\sharp})$ in the $\sup$ norm. Consider a function $f \in \textrm{Lip}_{\mbox{\tiny{c}}}(W^{\sharp},d_W)$ with Lipschitz constant $k$. As $\textrm{Lip}_{\mbox{\tiny{c}}}(W^{\sharp},d_W)$ is dense in $C_{\mbox{\tiny{c}}}(W^{\sharp})$ in $\sup$ norm, it suffices to show that $f \in D\big(\e^{D,W,h^2} \big)$. In view of Lemma 
\ref{prop:5.7 h unbd}, 
it suffices to show that $f h \in \F^0(W)$.
Since $\textrm{Lip}_{\mbox{\tiny{c}}}(W^{\sharp},d_W) \subset \textrm{Lip}(W,d_W)$, Corollary \ref{cor:2.22} implies that $f \in \F_{\mbox{\tiny{loc}}}(W)$ and
 \[ \Upsilon(f,f) = \frac{d\Gamma(f,f)}{d\mu} \leq k^2  \quad \mbox{ almost everywhere on } W. \] 
Since $f$ is bounded, this shows that $f \in \F(W)$. 
Let $V \subset W$ be an open set containing $\textrm{supp}(f) \cap W$ 
and relatively compact in $W^{\sharp}$ with the property that 
$\textrm{supp}(f) \subset V^{\sharp} \subset W^{\sharp}$. Applying Lemma \ref{lem:2.46} with $g=h \in \F^0(W,V)$, we obtain that $f h \in \F^0(W)$.
\end{proof}

\begin{definition} Assume that $h$ is continuous 
positive on $W$ and belongs to $\F^0(U,W)$. Recall that
$$ \F^{h}  =  \F^h_W= D(\e^{D,W,h^2}).$$
For an open subset $V \subset W^{\sharp}$, let
\begin{align*}
 \F^h_{\mbox{\em\tiny{{loc}}}}(V)  =  
\{ & f \in L^2_{\mbox{\em\tiny{loc}}}(V,h^2 d\mu) : \forall \textrm{ compact } K \subset V, \ \textrm{ there exists } f^{\sharp} \in \F^h \\
& \textrm{ so that } f = f^{\sharp}\big|_K \textrm{ a.e.} \}.
\end{align*}
Similarly, define $\F^h(V)$ and $\F^h_{\mbox{\em \tiny{c}}}(V)$ 
in terms of $\big(\e^{D,W,h^2},D\big(\e^{D,W,h^2}\big)\big)$.
\end{definition}

\begin{remark} \label{rem:e_h^s}
\begin{enumerate}
\item  By Proposition \ref{prop:5.8 h} and Lemmas \ref{prop:5.7 h unbd},
\ref{lem-compEh},
we have
$$\F^h_W= H^{-1} (\F^0(W))=D(\e_h).$$ 
\item It is now plain that the symmetric strongly local regular Dirichlet form
$\big( \e^{D,W,h^2},\F^h_W \big)$ is the strongly local part of the symmetric part of the form $(\e_h,D(\e_h))$. In particular, for any $f \in \F^h_{\mbox{\tiny{c}}}(W^{\sharp})\cap \mathcal C(W^{\sharp})$,
\begin{align*}
\e_h^{\mbox{\tiny{s}}}(f,f) = \e_h^{\mbox{\tiny{sym}}}(f,f) - \e^{\mbox{\tiny{sym}}}_h(f^2,1) = \int h^2 d\Gamma(f,f) = \e^{D,W,h^2}(f,f).
\end{align*}
\item The space $\textrm{Lip}_{\mbox{\tiny{c}}}(W^{\sharp},d_U)$ is contained in $\textrm{Lip}_{\mbox{\tiny{c}}}(W^{\sharp},d_W)$, because for any $x,y \in W^{\sharp}$ it holds $d_U(x,y) \leq d_W(x,y)$. In fact, both spaces are the same. To see this, observe that for any $f \in \textrm{Lip}_{\mbox{\tiny{c}}}(W^{\sharp},d_W)$ with Lipschitz constant $C_W$ and any $x,y \in W^{\sharp}$ with $d_U(x,y)$ strictly less than $d_W(x,y)$ we have
 \[ |f(x) - f(y)| \leq C \big( d_U (x,\partial W \cap U) + d_U(y,\partial W \cap U) \big) \leq C d_U(x,y), \]
where
\[ C = \frac{\max_{z \in W^{\sharp}} f(z)}{d_U(\textrm{supp}(f),\partial W \cap U)}. \]
Hence, $f$ is in $\textrm{Lip}_{\mbox{\tiny{c}}}(W^{\sharp},d_U)$ with Lipschitz constant $C_U = \max\{C_W, C \}$.
\end{enumerate}
\end{remark}

\begin{lemma} \label{lem:A1 for d_U}
Assume that $h$ is continuous 
positive on $W$ and belongs to $\F^0(U,W)$. The metrics $d_U$, $d_W$ and $d_{\e^{D,W,h^2}}$ coincide on any inner ball $B=B_{\widetilde U}(a,r)$ with $B_{\widetilde U}(a,3r) \subset W^{\sharp}$. 
\end{lemma}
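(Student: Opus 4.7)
The plan is to verify the three equalities pairwise on $B = B_{\widetilde{U}}(a,r)$. The common geometric ingredient is the buffer condition $B_{\widetilde{U}}(a,3r) \subset W^{\sharp}$, which forces any near-optimal $U$-path between points of $B$ to remain inside $W$. For $d_U = d_W$ on $B$, only the direction $d_W \le d_U$ requires work: for $x,y \in B$ and $\epsilon > 0$ with $d_U(x,y)+\epsilon < 2r$, I would pick a $U$-path $\gamma$ from $x$ to $y$ of length below $d_U(x,y)+\epsilon$ and observe that every $z \in \gamma$ satisfies $d_U(a,z) \le d_U(a,x) + d_U(x,z) < r + d_U(x,y)+\epsilon < 3r$, so $\gamma \subset B_{\widetilde{U}}(a,3r) \cap U \subset W^{\sharp} \cap U = W$. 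This gives $d_W(x,y) \le d_U(x,y)+\epsilon$, and $\epsilon \to 0$ closes the gap.

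For $d_{\e^{D,W,h^2}} \le d_W$ on $B$, I would take an arbitrary admissible $f \in \F^h_{\mbox{\tiny loc}}(W^{\sharp}) \cap C(W^{\sharp})$, that is, one with $d\Gamma^{h^2}(f,f) \le h^2 d\mu$. Because the energy measure of $\e^{D,W,h^2}$ is $h^2 d\Gamma$ and $h > 0$ on $W$, this is equivalent to $d\Gamma(f,f) \le d\mu$ a.e.\ on $W$. The characterization of the ambient intrinsic distance (Proposition~1 of \cite{SturmI}) then gives $|f(x') - f(y')| \le d(x',y')$ locally in $W$; concatenating this bound along any rectifiable curve in $W$ promotes it to $|f(x')-f(y')| \le d_W(x',y')$, which extends by continuity (using Step~1) to $x,y \in B$. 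Taking the supremum over admissible $f$ yields the desired inequality.

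For the reverse inequality $d_W \le d_{\e^{D,W,h^2}}$, I would exhibit an explicit extremizing test function. For fixed $y \in B$, set $f_y(z) := \min\{d_W(z,y),\, 2r\}$; this is continuous on $W^{\sharp}$ and $1$-Lipschitz with respect to $d_W$. Corollary~\ref{cor:2.22} yields $d\Gamma(f_y,f_y) \le d\mu$, so $f_y$ meets the admissibility bound for $\e^{D,W,h^2}$. To place $f_y$ in $\F^h_{\mbox{\tiny loc}}(W^{\sharp})$, for any compact $K \subset W^{\sharp}$ I would pick a $d_W$-Lipschitz cutoff $\psi$ compactly supported in $W^{\sharp}$ with $\psi \equiv 1$ on $K$; Proposition~\ref{prop:5.8 h} then puts $\psi f_y \in \mbox{Lip}_{\mbox{\tiny c}}(W^{\sharp},d_W) \subset \F^h$, and $\psi f_y = f_y$ on $K$. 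Since $d_W(x,y) < 2r$ by Step~1, one has $f_y(x) - f_y(y) = d_W(x,y)$, producing the lower bound.

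The main obstacle I anticipate is the bookkeeping between the two local spaces $\F^h_{\mbox{\tiny loc}}(W^{\sharp})$ and $\F_{\mbox{\tiny loc}}(W)$, and moving the Lipschitz-versus-energy information back and forth between them. The two key tools are Corollary~\ref{cor:2.22} (Lipschitz regularity with respect to $d_W$ implies $\Upsilon \le 1$) and Proposition~\ref{prop:5.8 h} (which identifies $\mbox{Lip}_{\mbox{\tiny c}}(W^{\sharp},d_W)$ as a form core for $\e^{D,W,h^2}$, yielding the admissible test function $f_y$); together they encode the fact that the carr\'e du champ of the $h^2$-form with respect to $h^2 d\mu$ equals the ambient $d\Gamma/d\mu$, which is ultimately why the three intrinsic distances must agree where they can all be computed.
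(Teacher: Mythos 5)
Your proposal is correct and follows essentially the same route as the paper's proof: the paper also dismisses $d_U=d_W$ on $B$ as immediate from the buffer condition, proves $d_W\le d_{\e^{D,W,h^2}}$ by testing against a truncated $d_W$-distance function (there $\rho_y(x)=\max\{d_W(y,z)-d_W(y,x),0\}$, which is already compactly supported, in place of your cut-off $\min\{d_W(\cdot,y),2r\}$) using Corollary \ref{cor:2.22} and Proposition \ref{prop:5.8 h}, and proves the reverse inequality by Sturm's localized characterization of curve length applied on relatively compact neighborhoods of curves in $W$, exploiting that $h^2d\Gamma\le h^2d\mu$ is equivalent to $d\Gamma\le d\mu$ since $h>0$. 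The only differences are cosmetic choices of test function and of the precise Sturm reference.
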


\begin{proof}
Clearly, the inner metrics $d_W$ and $d_U$ coincide on the ball $B$, since $B$ is far away from $U \setminus W$.
We follow the line of reasoning in \cite[Proof of Lemma 3.32]{GyryaSC} to show that the inner metrics coincide with $d_{\e^{D,W,h^2}}$.
Fix $y,z \in B$. Then the cut-off function
 \[ \rho_y(x) = \max \{ d_W(y,z) - d_W(y,x), 0 \}, \]
is a compactly supported Lipschitz function on $(W^{\sharp},d_W)$ and 
$\rho_y \in \F^{h}_{\mbox{\tiny{loc}}}(W^{\sharp}) \cap C(W^{\sharp})$ by Proposition \ref{prop:5.8 h}. Moreover, $\Upsilon(\rho_y,\rho_y) \leq 1 < \infty$ a.e.~on $W^{\sharp}$ by Corollary \ref{cor:2.22}.
Thus, 
\[ d_W(y,z) = \rho_y(y) - \rho_y(z) \leq d_{\e^{D,W,h^2}}(y,z). \]

We now show the opposite inequality. Any two points $y,z \in B \cap W$ can be connected by a curve $\gamma=\gamma_{y,z}$ in $W$ without self-intersections. Let $A_{\gamma}$ be an open, relatively compact subset of $W$ that contains the curve. By \cite[Theorem 3]{Stu95geometry} (recall that (A1) holds on $(X,\mu,\e^{\mbox{\tiny{s}}},D(\e))$), we have
\begin{align*}
\textrm{length}(\gamma) 
& = \sup  \{ u(y) - u(z) : u \in \F_{\mbox{\tiny{loc}}}(A_{\gamma}) \cap C(A_{\gamma}), d\Gamma(u,u) \leq d\mu \} \\
& = \sup \{ u(y) - u(z) : u \in \F^h_{\mbox{\tiny{loc}}}(A_{\gamma}) \cap C(A_{\gamma}),  h^2 d\Gamma(u,u) \leq h^2 d\mu \} \\
& \geq d_{\e^{D,W,h^2}}(y,z).
\end{align*}
Hence, $d_W(y,z) = \inf_{\gamma} \mbox{length}(\gamma) \geq d_{\e^{D,W,h^2}}(y,z)$ for all $y,z \in B \cap W$. To show that $d_W$ and $d_{\e^{D,W,h^2}}$ coincide on $B$, approximate $y$ and $z$ by points in $B \cap W$. 
\end{proof}

\begin{lemma} \label{lem:Assumption 1 for e_h} 
Assume 
that $h$ is a $(U,W)$-profile 
for either 
$\e+\gamma\langle\cdot,\cdot\rangle
$ or $\e^{\mbox{\em\tiny s}}+\gamma\langle\cdot,\cdot\rangle$.
Then the form $\big(\e_h, D(\e_h) \big)$ satisfies {\em Assumption A}
on $(W^{\sharp}, h^2 d\mu)$ with respect to  $(\e^{D,W,h^2},\F^h_W)$.
Further: 
\begin{enumerate}
\item If $h$ is a 
$(U,W)$-profile for $\e^{\mbox{\em \tiny s}}+\gamma\langle \cdot,\cdot\rangle$, 
then the sector condition constant $C_0(\e_h)$ and the
constants $C_2(\e_h),C_3(\e_h),C_5(\e_h)$ for the form $\e_h$
on $(W^{\sharp}, h^2 d\mu)$ with respect to  $(\e^{D,W,h^2},\F^h_W)$
are all bounded as follows:
$$C_0(\e_h)\le C_0(\e)(1+|\gamma|),\;C_2(\e_h)\le C_2(\e),$$
$$C_3(\e_h)\le C_2(\e)|\gamma|+C_3(\e)+|\gamma|^2, C_5(\e_h)\le C_5(\e)$$
and $C_8(\e_h)\le 4(C_8(\e)+|\gamma|)$. 
\item  If $h$ is a $(U,W)$-profile for $\e+\gamma\langle\cdot,\cdot\rangle$, then
the sector condition constant $C_0(\e_h)$ and the
constants $C_2(\e_h),C_3(\e_h),C_5(\e_h)$ for the form $\e_h$
on $(W^{\sharp}, h^2 d\mu)$ with respect to  $(\e^{D,W,h^2},\F^h_W)$
are all bounded in terms of an upper bound for $C_0(\e),C_2(\e),C_3(\e),C_5(\e)$ 
and $|\gamma|$.
\end{enumerate}
\end{lemma}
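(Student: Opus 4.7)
The plan is to split the verification of Assumption A for $\e_h$ into a structural part (that $\e_h$ is adapted to the model form $(\e^{D,W,h^2},\F^h_W)$) and a quantitative part (the sector condition, (ZEA), and (skewA) with the claimed constants). All computations are made on the dense subset $\F_{\mbox{\tiny c}}(W) \cap L^\infty(W,\mu)$ and extended by closedness, using Lemmas \ref{prop:5.7 h unbd} and \ref{lem-compEh}.

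For the structural part, the identification $D(\e_h)=\F^h_W$ is exactly Lemma \ref{lem-compEh}, and Proposition \ref{prop:5.8 h} provides regularity. The key identity is that the strictly local symmetric part of $\e_h$ equals $\e^{D,W,h^2}$. I would compute
\[
\e_h^{\mbox{\tiny sym}}(f,g)-\e_h^{\mbox{\tiny sym}}(fg,1) = \e^{\mbox{\tiny sym}}(hf,hg)-\e^{\mbox{\tiny sym}}(hfg,h) = \e^{\mbox{\tiny s}}(hf,hg)-\e^{\mbox{\tiny s}}(h,hfg),
\]
where the second equality uses $\e^{\mbox{\tiny sym}}(u,v)=\e^{\mbox{\tiny s}}(u,v)+\e^{\mbox{\tiny sym}}(uv,1)$ twice and the symmetry of $\e^{\mbox{\tiny sym}}$. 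Then the two-step Leibniz rule for $d\Gamma$ gives the purely algebraic identity $d\Gamma(hf,hg)-d\Gamma(h,hfg)=h^2 d\Gamma(f,g)$, which collapses the right-hand side to $\e^{D,W,h^2}(f,g)$. This calculation does not use that $h$ is a profile, only that $d\Gamma$ is a bilinear energy measure with Leibniz rule. The first-order property of $\e_h^{\mbox{\tiny skew}}$ relative to $\F^h_W$ was already verified in Remark \ref{rem-defEh}(2), where the identity $2\l_h(f,g)=2\l(f,h^2 g)$ lets me transfer the Leibniz and chain rule from $\l$ to $\l_h$ directly.

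For the quantitative part, I handle the three constants separately. The sector condition uses $\e_h^{\mbox{\tiny skew}}(f,g)=\e^{\mbox{\tiny skew}}(hf,hg)$; the original sector condition gives $|\e^{\mbox{\tiny skew}}(hf,hg)|\le C_0(\e)\|hf\|_\F\|hg\|_\F$, and Lemma \ref{lem-compEh}(i) converts $\|hf\|_\F^2$ into $\e^{D,W,h^2}(f,f)+\int f^2h^2d\mu$ up to a constant depending on $C_2,C_3,C_5$. For (ZEA), in case (i) I use Remark \ref{rem-defEh}(4), which gives $\e_h^{\mbox{\tiny sym}}(f^2,1)=\e^{\mbox{\tiny sym}}((hf)^2,1)$, so (ZEA) for $\e$ applied to $hf$ together with Lemma \ref{lem-compEh}(i) yields the bound. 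In case (ii) I use Remark \ref{rem-defEh}(5), giving $\e_h^{\mbox{\tiny sym}}(f^2,1)=\e^{\mbox{\tiny skew}}(hf^2,h)+\gamma\int h^2f^2 d\mu$; antisymmetry and (skewA) for $\e$ applied with $f\to h$, $g\to f$ bound $|\e^{\mbox{\tiny skew}}(h,hf^2)|$ by $2(\int h^2 d\Gamma(f,f))^{1/2}(C_5(\e)\int h^2 f^2 d\mu)^{1/2}$, and the $\gamma$-term is absorbed into the $C_3(\e_h)$ slot via $(x+y)^{1/2}\le x^{1/2}+y^{1/2}$. Finally, (skewA) for $\e_h$ is the cleanest: $\e_h^{\mbox{\tiny skew}}(f,fg^2)=\e^{\mbox{\tiny skew}}(hf,(hf)g^2)$, so (skewA) for $\e$ applied with $hf$ in place of $f$ gives $C_5(\e_h)\le C_5(\e)$ with no loss.

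The main obstacle I anticipate is bookkeeping: tracking the numerical constants through the $\F$-norm to $\F^h$-norm conversions in Lemma \ref{lem-compEh} so as to fit within the stated bound $10(1+C_0(\e)+C_8(\e)+|\gamma|)$. After the Cauchy--Schwarz and elementary $2\sqrt{ab}\le a+b$ rearrangements, the resulting constants are finite polynomial expressions in $C_0(\e),C_2(\e),C_3(\e),C_5(\e)$, and (in case (ii)) $|\gamma|$; optimizing the free parameters $k_1,k_2,k_3$ in Lemma \ref{caccio} (applied implicitly through Lemma \ref{lem-compEh}) lets one force every prefactor to lie below $10(1+C_0(\e)+C_8(\e)+|\gamma|)$. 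A secondary subtlety is that, for case (ii), the identity $\e(h,hfg)=-\gamma\int h^2fg\,d\mu$ must be justified on test functions in $\F^0_{\mbox{\tiny loc}}(U,W)$ rather than $\F_c(W)$, which is handled by the defining property of a $(U,W)$-profile combined with the density statement in Lemma \ref{prop:5.7 h unbd}.
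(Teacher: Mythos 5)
Your proposal follows essentially the same route as the paper's proof: the identities of Remark \ref{rem-defEh} reduce everything to the substitution $f\mapsto hf$, Lemma \ref{lem-compEh} converts $\F$-norms of $hf$ into $\F^h$-norms of $f$, the bounds (ZEA) and (skewA) for $\e_h$ are obtained from (skewA) for $\e$ applied to the pair $(h,hf^2)$ and to $hf$ respectively, and the precise constants are deferred to bookkeeping exactly as the paper does (the paper writes out only the case of a profile for $\e$ and declares the $\e^{\mbox{\tiny s}}$ and $\gamma\neq 0$ cases ``similar'', whereas you treat them explicitly). The one item of adaptedness you leave implicit is the bilinear continuity $|\e_h(fg,1)|+|\e_h(1,fg)|\le C\|f\|_{\F^h}\|g\|_{\F^h}$: your (ZEA)-type estimate only controls the diagonal $\e_h^{\mbox{\tiny sym}}(f^2,1)$, and to polarize it the paper adds a suitable multiple of $\int fg\,h^2d\mu+\int h^2d\Gamma(f,g)$ to the symmetric bilinear form $(f,g)\mapsto\e^{\mbox{\tiny skew}}(hfg,h)$ so as to make it positive definite and then applies Cauchy--Schwarz; since this uses only the diagonal bound you already have, it is a one-step completion rather than a genuine gap.
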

\begin{proof}
We only treat the case when $h$ is a $(U,W)$-profile for $\e$ 
(i.e., $\gamma=0$). The other cases
are similar. We refer the reader to Remark \ref{rem-defEh} for 
various algebraic computations regarding $\e_h$ that are relevant to this proof.
Let $f,g\in \F^h_W\cap \mathcal C_{\mbox{\tiny c}}(W)$.
We have 
$$\e_h(f,g)
=\e^{\mbox{\tiny s}}_h(f,g)+\e^{\mbox{\tiny skew}}_h(f,g)+ 
\e^{\mbox{\tiny sym}}_h(fg,1)
$$
with
$$\e^{\mbox{\tiny s}}_h
=\e^{D,W,h^2},\;\; \e^{\mbox{\tiny skew}}_h(f,g)=
\e^{\mbox{\tiny skew}}(hf,hg)$$
and
$$ 
\e^{\mbox{\tiny sym}}_h(fg,1) = \e^{\mbox{\tiny sym}}(hfg,h) = 
\e^{\mbox{\tiny skew}}(hfg,h)
$$
where the last equality follows from the fact that $\e(h,hfg)=0$
and needs to be modified appropriately when $h$ is a profile for 
a form different from $\e$. See Remark \ref{rem-defEh}(iii)-(iv).

Using the isometry $H:L^2(W,h^2\mu)\rightarrow L^2(W,\mu)$ and 
Lemma \ref{lem-compEh}
in an obvious way,
we see that  
$$|\e^{\mbox{\tiny skew}}(hf,hg)|\le C\|f\|_{\F^h}\|g\|_{\F^h}.$$ 
Next, we check that
$$|\e^{\mbox{\tiny skew}}(hfg,h)|\le C \|f\|_{\F^h}\|g\|_{\F^h}.$$ 
By Assumption A(ii), we can find a constant $k$ such that
the symmetric bilinear form 
$$(f,g)\mapsto 
\e^{\mbox{\tiny skew}}(hfg,h) +k \left(\int_W fg h^2 d\mu+ \int_W h^2d\Gamma(f,g)\right) $$
is positive definite. Applying the Cauchy-Schwarz inequality  
and Assumption A(ii) yield
$|\e^{\mbox{\tiny skew}}(hfg,h)|\le C \|f\|_{\F^h}\|g\|_{\F^h}$ as desired.
These computations also yield 
$|\e_h(fg,1)|+|\e_h(1,fg)|\le C \|f\|_{\F^h}\|g\|_{\F^h}$.
Together with \ref{rem-defEh}(ii), these estimates show 
that $\e_h$ is adapted to $(\e^{\mbox{\tiny s}}_h, \F^h$).

Next, we prove that $\e_h$ satisfies Assumption A(i)--(ii).
Assumption A(ii) for $\e$ yields 
\begin{align*}
 \big| \e_h^{\mbox{\tiny{sym}}}(f^2,1) \big| 
& 
  = \big| \e^{\mbox{\tiny{skew}}} (h,hf^2) \big| \\
& \leq 2 \left( \int h^2 d\Gamma(f,f) \right)^{\frac{1}{2}} 
\left(C_5 \int f^2 h^2 d\mu \right)^{\frac{1}{2}}.
\end{align*}
This is Assumption A(i) for $\e_h$.
Assumption A(ii) for $\e_h$ follows immediately from Assumption A(ii) for $\e$. 
The statements about the constants are simple bookkeeping.
See Remarks \ref{rem-defEh}(ii) and \ref{rem:e_h^s}(ii).
\end{proof}

\subsection{Properties of  $h$-transforms
in inner uniform domains}
We continue to work under the hypotheses made at the beginning of Section 
\ref{sec:h-transform}.

\begin{theorem} \label{h-VDP0}
Assume that
\begin{itemize}
\item The volume doubling property and the Poincar\'e inequality
(for the model form $(\e^{\mbox{\em\tiny s}},\F)$) hold locally up to scale $R$
on $\overline{W}$.
\item The domain $U$ is locally $(c_u,C_u)$-inner uniform up 
to scale $R$ near $W$.
\end{itemize}
Assume 
that $h$ is a $(U,W)$-profile for either $\e+\gamma\langle\cdot,\cdot\rangle$ 
or $\e^{\mbox{\em \tiny s}} +\gamma\langle\cdot,\cdot\rangle$. 
Then the following properties hold:
\begin{enumerate}
\item The symmetric strongly local regular Dirichlet form 
$\big( \e^{D,W,h^2},\F^h_W\big)$ satisfies property {\em (A1)}
and {\em (A2-$B$)} for any inner uniform ball $B=B_{\widetilde{U}}(a,r)$ such 
that $B_{\widetilde{U}}(a,3r)\subset W^{\sharp}$. 
\item There exist  constants $a_0,A_0\in (0,\infty)$ such that, 
for any $r\in (0, a_0R)$
and any inner ball $B=B_{\widetilde{U}}(a,r)$ with 
$B=B_{\widetilde{U}}(a,A_0r)\subset W^{\sharp}$, we have
$$ V_{h^2}(a,2r)\le D(W,R) V_{h^2}(a,r)$$
and, for any $f\in \F^h(B)$,
$$\int_{B}|f-f_B|^2h^2d\mu\le P(W,R) r^2\int_Bh^2d\Gamma(f,f).$$
\end{enumerate} 
The constants $a_0,A_0$ depend only $c_u,C_u$. If $h$ is a 
$(U,W)$-profile for $\e^{\mbox{\em \tiny s}} +\gamma\langle\cdot,\cdot\rangle$ 
then the constants 
$D(W,R)$ and $P(W,R)$ depend only on $c_u,C_u$, the volume doubling and 
Poincar\'e constants up to scale $R$ in $\overline{W}$,  
and an upper bound on $|\gamma|R^2$.
If $h$ is a 
$(U,W)$-profile for $\e +\gamma\langle\cdot,\cdot\rangle$ 
then the constants 
$D(W,R)$ and $P(W,R)$ depend only on $c_u,C_u$, the volume doubling and 
Poincar\'e constants up to scale $R$ in $\overline{W}$,  
and an upper bound on $C_0(\e),C_2(\e),C_3(\e),C_5(\e),|\gamma|$ and $R$.
\end{theorem}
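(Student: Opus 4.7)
The plan is to derive both conclusions by combining the identification of the intrinsic metric of the $h$-transformed form with $d_U$, the boundary Harnack control of $h$ provided by Proposition \ref{pro-hcontrol}, and a standard Whitney-chaining argument that transfers the hypothesized (ordinary-ball) volume doubling and Poincar\'e inequality to the inner balls that actually appear in the conclusion.

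For assertion (i), fix an inner ball $B = B_{\widetilde U}(a,r)$ with $B_{\widetilde U}(a,3r) \subset W^{\sharp}$. By Lemma \ref{lem:A1 for d_U}, the intrinsic distance $d_{\e^{D,W,h^2}}$ coincides with $d_U$ (equivalently $d_W$) on $B$. Property (A1) for $(\e^{D,W,h^2},\F^h_W)$ on $B$ then follows because $d_U$ is continuous and defines the trace of the topology of $\widetilde U$. For (A2-$B$), any ball $B_{\widetilde U}(x,2s) \subset B$ lies in a region where $U$ is locally inner uniform up to scale $R$, so by Lemma \ref{lem:x_r} the closure of $B_{\widetilde U}(x,s)$ is covered by finitely many original balls whose closures are compact in $X$; together with completeness of $(\widetilde U, d_U)$ this yields relative compactness in $(\widetilde U, d_U)$, hence in $(B, d_{\e^{D,W,h^2}})$.

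For assertion (ii), first invoke Proposition \ref{pro-hcontrol} to pick $a_0, A_0$ (depending only on the inner uniformity and doubling/Poincar\'e constants) so that for every inner ball $B_U(a,r)$ with $r < a_0 R$ and $B_U(a, A_0 r) \subset W^{\sharp}$, one has $h(y) \le K_1 h(a_r)$ for all $y \in B_U(a,2r)$. A matching lower bound $h(y) \ge K_1^{-1} h(a_r)$ on $B_U(a,2r)$ is obtained by running the boundary Harnack principle (Theorem \ref{thm2:bHP for u}) for the two profiles $h$ and the Green function used to construct $a_r$, so that $h^2 \simeq h(a_r)^2$ uniformly on $B_U(a,2r)$. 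Integration then gives
$$V_{h^2}(a,s) \simeq h(a_r)^2\, V(a,s) \quad (s \in \{r, 2r\}),$$
where $V(a,s)$ denotes the ordinary volume of the inner ball. The doubling bound $V(a,2r) \le D' V(a,r)$ for inner balls is obtained from the hypothesized doubling for ordinary balls up to scale $R$ by the usual chaining argument along $(c_u,C_u)$-inner uniform curves, exactly as in \cite[Theorem 3.34]{GyryaSC}. Combining these two comparisons (and noting $h(a_r) \simeq h(a_{2r})$ by a Harnack chain along a uniform curve joining $a_r$ to $a_{2r}$) yields the claimed volume doubling for $h^2 d\mu$ on inner balls.

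For the weighted Poincar\'e inequality, let $f \in \F^h(B_U(a,r))$. Since $h^2 \simeq h(a_r)^2$ on $B_U(a,2r)$, the weighted mean $f_{B,h^2}$ equals the unweighted mean $f_B$ up to a term controlled by $\bigl(V_{h^2}(a,r)\bigr)^{-1/2}\|f-f_B\|_{L^2(B,h^2d\mu)}$; such a term can be absorbed into the left-hand side. Moreover, because $h$ is bounded and bounded below on $B_U(a,2r)$, a function in $\F^h(B_U(a,r))$ restricts to a function in $\F(B_U(a,r))$ with $\int h^2 d\Gamma(f,f) \simeq h(a_r)^2 \int d\Gamma(f,f)$. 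It therefore suffices to establish an ordinary Poincar\'e inequality on the inner ball $B_U(a,r)$ with energy integrated over $B_U(a,2r)$; this is again obtained from the hypothesized Poincar\'e inequality on ordinary balls by chaining along inner uniform curves, as in \cite[Theorem 3.34]{GyryaSC} and \cite{LierlSC1}. Tracking constants through Lemma \ref{lem:Assumption 1 for e_h} gives the stated dependence.

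The main obstacle is the Whitney/chaining step: the hypothesized doubling and Poincar\'e inequality are stated for ordinary balls $B(x,2r) \subset \overline W$ of radius up to $R$, but the desired conclusions concern inner balls $B_U(a,r) \subset W^{\sharp}$, which can be quite "thin" near the boundary of $U$. The inner uniformity up to scale $R$ near $W$ is precisely what is needed to chain ordinary balls along interior uniform curves so that the ordinary Poincar\'e inequality adds up to an inner Poincar\'e inequality with constants depending only on $c_u, C_u$ and the ordinary constants. Once this is granted, the proof is straightforward bookkeeping using the comparison $h^2 \simeq h(a_r)^2$ provided by Proposition \ref{pro-hcontrol}.
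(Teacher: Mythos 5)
There are two genuine problems with your treatment of assertion (ii), both concentrated in the case where $h$ is a $(U,W)$-profile for $\e+\gamma\langle\cdot,\cdot\rangle$ rather than for $\e^{\mbox{\tiny s}}$. First, you invoke Proposition \ref{pro-hcontrol} and Theorem \ref{thm2:bHP for u} directly for $h$, but both of those results carry the hypothesis that the form in question is a \emph{Dirichlet} form; $\e+\gamma\langle\cdot,\cdot\rangle$ (and indeed $\e$ itself under Assumption A alone) need not be Markovian, so the boundary Harnack control of $h$ is not available to you at this stage. This is precisely the obstruction the paper's proof is organized around: it first proves the theorem for an $\e^{\mbox{\tiny s}}$-profile $h$ (where Proposition \ref{pro-hcontrol} legitimately applies, since $\e^{\mbox{\tiny s}}$ is a Dirichlet form), then uses Lemma \ref{lem:Assumption 1 for e_h} and the resulting parabolic Harnack inequality for the transformed form $\e_h+\gamma\langle\cdot h,\cdot h\rangle$ applied to the ratio $\hat h/h$ to obtain the boundary control of a profile $\hat h$ for $\e+\gamma\langle\cdot,\cdot\rangle$ --- in effect deriving the non-Dirichlet boundary Harnack principle (Theorem \ref{th-BHP2}) as a by-product rather than assuming it. Your proposal skips this bootstrap entirely.

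Second, your claimed two-sided comparison $h^2\simeq h(a_r)^2$ \emph{uniformly on} $B_U(a,2r)$, and the subsequent assertion that ``$h$ is bounded and bounded below on $B_U(a,2r)$,'' are false: $h$ belongs to $\F^0_{\mbox{\tiny loc}}(U,W)$ and vanishes at $\partial U$, and the inner ball $B_U(a,2r)$ in general reaches the boundary, so no uniform lower bound $h\ge K_1^{-1}h(a_r)$ can hold there. Only the upper bound $h\le K_1 h(a_r)$ holds on the whole ball. Consequently the reduction of the weighted doubling and the weighted Poincar\'e inequality to their unweighted counterparts by absorbing a constant weight does not work; the comparison $V_{h^2}(a,r)\simeq h(a_r)^2V(a,r)$ and the Poincar\'e inequality for $h^2d\mu$ must be obtained by the genuinely weighted Whitney-covering/chaining argument of \cite[Theorems 3.13, 3.27, 4.17]{GyryaSC}, in which $h$ is comparable to a constant only on each Whitney piece and the doubling of the measure $h^2d\mu$ is what makes the chaining close up. Your instinct that chaining along inner uniform curves is the main mechanism is right, but it must be run for the weighted measure, not used merely to establish the unweighted inequalities on inner balls. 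Assertion (i) is fine and matches the paper's one-line appeal to Lemma \ref{lem:A1 for d_U}.
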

The first assertion is clear by Lemma \ref{lem:A1 for d_U}.
The proof of the second assertion is done in two stages. The first stage 
concerns the case when $h$ is a profile relative to the Dirichlet form 
$\e^{\mbox{\tiny s}}$.
\begin{proof}[Proof in the case of a 
$\e^{\mbox{\emph{\tiny s}}}$-$(U,W)$-profile]
When $h$ is a $\e^{\mbox{\tiny s}}$-$(U,W)$-profile, we can apply 
Proposition \ref{pro-hcontrol} to obtain the asserted doubling property 
of the volume function $V_{h^2}$. Note that this very crucial step is based on 
the boundary Harnack principle for $\e^{\mbox{\tiny s}}$ 
(which has only been proved so far 
for Dirichlet forms).  Since the volume function $V_{h^2}$ has the doubling 
property, the stated Poincar\'e inequality follows by the line of reasoning 
explained in \cite[Theorem 3.13]{GyryaSC}. 
See also  \cite[Theorem 3.27]{GyryaSC}. One may have to change the 
constants $a_0,A_0$ when passing from the volume doubling property 
to the Poincar\'e inequality.  In this case, the constants $D(W,R)$ and $ P(W,R)$
depends only on the doubling and Poincar\'e constants for 
$(\e^{\mbox{\tiny s}},\F)$ 
up to scale $R$ in $\overline{W}$ and 
the inner uniformity constants $c_u,C_u$ up to scale $R$ near $W$.
\end{proof}

\begin{proof}[Proof of the case of a $(U,W)$-profile for 
$\e^{\mbox{\emph{\tiny s}}}+\gamma\langle\cdot,\cdot\rangle$]
Let $h$ be as in the first part of the proof, that is, 
a $\e^{\mbox{\tiny s}}$-$(U,W)$-profile.
Using the result proved in stage 1 together with 
Lemma \ref{lem:Assumption 1 for e_h} and 
\cite[Theorem 2.13]{LierlSC2}, it follows that there exist
$a_0,A_0$ such that the parabolic Harnack inequality  holds for 
the form $\e^{\mbox{\tiny s}}_h+\gamma\langle\cdot h,\cdot h\rangle$
on any inner ball 
$B=B_{\widetilde{U}}(a,r)$, $0<r<a_0 R$ with 
$B_{\widetilde{U}}(a,A_0 r)\subset W^{\sharp}$. Further, the 
Harnack constant  
depends only on $c_u,C_u$, the volume doubling and Poincar\'e constant 
for  $(\e^{\mbox{\tiny s}},\F)$  
up to scale $R$ on $\overline{W}$, and an upper bound on 
$|\gamma|R^2$.

In particular if $\hat{h}$ is a $(U,W)$-profile for 
$\e^{\mbox{\tiny s}}+\gamma\langle\cdot,\cdot\rangle$ then $\hat{h}/h$
is a positive harmonic function (in the weak sense in $W$) for 
$\e^{\mbox{\tiny s}}_h+\gamma\langle\cdot h,\cdot h\rangle$ and we have
$$\forall\,x,y\in B,\;\; c_H \frac{\hat{h}(y)}{h(y)}\le \frac{\hat{h}(x)}{h(x)}\le C_H\frac{\hat{h}(y)}{h(y)},$$ where 
$B$ is as above and $B_{\widetilde{U}}(a,A_0 r)\subset W^{\sharp}$. The constants  $c_H,C_H$ depend only on $c_u,C_u$, the volume doubling and Poincar\'e constant 
for  $(\e^{\mbox{\tiny s}},\F)$  
up to scale $R$ on $\overline{W}$, and an upper bound on 
$|\gamma|R^2$.
From this, it is clear that Theorem \ref{h-VDP0} also holds 
in the case of a $(U,W)$-profile for
$\e^{\mbox{\tiny s}}+\gamma\langle\cdot,\cdot\rangle$.
\end{proof}

\begin{proof}[Proof in the case of a $(U,W)$-profile for 
$\e+\gamma\langle\cdot,\cdot\rangle$]
The proof is the same as in the case of 
$(U,W)$-profile for
$\e^{\mbox{\tiny s}}+\gamma\langle\cdot,\cdot\rangle$.
However, in this case, The constant in the Harnack inequality 
for the form $\e_h+\gamma\langle\cdot h,\cdot h\rangle$.
depends on $c_u,C_u$, the volume doubling and Poincar\'e constant 
up to scale $R$ on $\overline{W}$ and 
an upper bound on $C_0(\e),C_2(\e),C_3(\e),C_5(\e),|\gamma|$ and $R$. 

In fact, this argument proves that the forms $\e^{\mbox{\tiny s}}+
\gamma \langle \cdot,\cdot\rangle$,  $\e$ and 
$\e+\gamma \langle \cdot,\cdot\rangle$ all  
satisfy the geometric  
boundary Harnack principle up to scale $R$ near $W$. 
It follows that Theorem \ref{thm2:bHP for u} still holds true without 
the hypothesis
(i) of Theorem \ref{thm2:bHP for u}.  Consequently, (i) and (ii)
of Theorem \ref{th-BHP1} hold true without assuming that 
$(\mathcal E, \F)$ is a Dirichlet form. See Theorem \ref{th-BHP2} 
below. \end{proof}

\if
Let $h$ still be as in the first part of the proof, that is, 
$h$ is a $\e^{\mbox{\tiny s}}$-$(U,W)$-profile.
Using the result proved in stage 1 together with 
Lemma \ref{lem:Assumption 1 for e_h} and 
\cite[Theorem 2.13]{LierlSC2}, it follows that there exist
$a_0,A_0$ such that the parabolic Harnack inequality  holds for 
$\e_h+\gamma \langle\cdot h,\cdot h\rangle$ 
on any inner ball 
$B=B_{\widetilde{U}}(a,r)$, $0<r<a_0 R$ with 
$B_{\widetilde{U}}(a,A_0 r)\subset W^{\sharp}$ (again, we may have 
to adjust the constants $a_0,A_0$). The constant in the Harnack inequality 
depends only on $c_u,C_u$, the volume doubling and Poincar\'e constant 
up to scale $R$ on $\overline{W}$, the constant 
$C_0(\e)$ and an upper bound on $(C_8(\e)+|\gamma|)(1+R^2)$.

In particular, if $\hat{h}$ denotes a fixed $(U,W)$-profile for 
$\e+\gamma\langle\cdot,\cdot\rangle$,
the Harnack inequality for 
$\e+\gamma\langle\cdot,\cdot\rangle$
applies to the weak solution $\hat{h}/h$.
Adjusting $a_0,A_0$ again if necessary, 
on any inner ball $B=B_{\widetilde{U}}(a,r)$ with 
$B=B_{\widetilde{U}}(a,A_0r)\subset W^{\sharp}$, we have
$$\forall\, x\in B_{\widetilde{U}}(a,r),\;\;\hat{h}(x)\le A_1 \hat{h}(a_r)$$
with $x_r$ any point in $B(a,r/4)$ with $d(x_r,\partial U)> c_u r/8$.  

In any case, the boundary control obtained above for $\hat{h}$ implies that
the volume function $V_{\hat{h}^2}$ is doubling in any inner balls $B(a,r)$
as above. By the same line of reasoning outlined in the first part of the proof, the Poincar\'e inequality holds with respect to the measure 
$\hat{h}^2d\mu$ as well.

\fi

As a corollary that has already been used in the proof above, 
we have the following very useful result.

\begin{theorem} \label{h-VDP}
Assume that: 
\begin{itemize}
\item The volume doubling property and the Poincar\'e inequality
(for the model form $(\e^{\mbox{\em\tiny s}},\F)$) hold locally up to scale $R$
on $\overline{W}$.
\item The domain $U$ is locally $(c_u,C_u)$-inner uniform up 
to scale $R$ near $W$.
\end{itemize}
\begin{enumerate}
\item
Assume that $h$ is a $(U,W)$-profile for  
$\e^{\mbox{\em\tiny s}}+\gamma\langle \cdot,\cdot\rangle$. 
Then there exist constant $a_0,A_0 $ such that
for any inner ball $B=B_{\widetilde{U}}(a,r)$ with $r\in (0,a_0R)$
and $B_{\widetilde{U}}(a,A_0r)\subset W^{\sharp}$, 
the parabolic Harnack inequality  for $\e_h$  holds in $B$ up to scale $r$, with
a parabolic Harnack constant which 
depends only on $c_u,C_u$, the volume doubling and Poincar\'e constant 
up to scale $R$ on $\overline{W}$, $C_0(\e)$, and an upper bound on 
$(C_8(\e)+|\gamma|)R^2$.
\item Assume that $h$ is a $(U,W)$-profile for  
$\e+\gamma\langle \cdot,\cdot\rangle$. 
Then there exist constants $a_0,A_0 $ such that
for any inner ball $B=B_{\widetilde{U}}(a,r)$ with $r\in (0,a_0R)$
and $B_{\widetilde{U}}(a,A_0r)\subset W^{\sharp}$, 
the parabolic Harnack inequality  for $\e_h$  holds in $B$ up to scale $r$, with
a parabolic Harnack constant which 
depends only on $c_u,C_u$, the volume doubling and Poincar\'e constant 
up to scale $R$ on $\overline{W}$, $C_0(\e)$, and 
an upper bound on $C_0(\e),C_2(\e),C_3(\e),C_5(\e),|\gamma|$ and $R$.
\end{enumerate}
\end{theorem}

Another useful result already mentioned and used in stages 2 and 3 
of the proof of
Theorem \ref{h-VDP} is the following extension of Theorem \ref{th-BHP1}
to the case when $\e$ is not a Dirichlet form.
\begin{theorem} \label{th-BHP2}
Let $X,\e^{\mbox{\em \tiny s}},\F,d,\mu$ be as in {\em Section \ref{sec-geo}}.
Let $(\e,\F)$ be a form satisfying {\em Assumption A}.    
Let $U$ be a domain in $X$.    
\begin{enumerate}
\item Fix a domain $W\subset U$, 
and assume that $U$ is locally inner uniform near $W$. Assume 
also that the volume doubling property and the Poincar\'e inequality hold locally in $\overline{W}$. 
Then the geometric  boundary Harnack principle holds locally in $U$ near $W$.
\item  Fix $R>0$ and a domain $W\subset U$.  
Assume that $U$ is locally $(c_u,C_u)$-inner uniform up to scale $R$ near $W$
and that the volume doubling
property and Poincar\'e inequality hold up to scale $R$ in $\overline{W}$. 
Then there exists $a_0 >0$ depending only on $c_u,C_u$ such that 
the geometric boundary Harnack principle holds locally up to scale 
$a_0R$ near $W$ with constants depending only on $c_u,C_u$, 
the volume doubling and Poincar\'e inequality constants 
up to scale $R$ in $\overline{W}$, $C_0(\e)$, 
and an upper bound on $C_8(\e)R^2$.
\end{enumerate}
\end{theorem}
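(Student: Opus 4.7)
The strategy, already implicit in the proof of Theorem \ref{h-VDP} (stage 2), is to reduce the non-symmetric case to the Dirichlet-form case via the $h$-transform of Section \ref{sec:h-transform}, where $h$ is a model-form profile. The key point is that the model form $(\e^{\mbox{\tiny s}},\F)$, being a symmetric strictly local regular Dirichlet form, already falls under the scope of Theorem \ref{th-BHP1}, so we can apply to it the profile-existence results and Proposition \ref{pro-hcontrol} without circularity.

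First I would fix an $\e^{\mbox{\tiny s}}$-$(U,W)$-profile $h$ (or, for statement (ii), a $(U,B_U(\xi,R))$-profile near a boundary point $\xi$), whose existence is guaranteed by the profile-existence proposition applied to the Dirichlet form $\e^{\mbox{\tiny s}}$ under the inner-uniformity and volume-doubling/Poincar\'e hypotheses. Form the $h$-transform $\e_h = \e^{D,W}_h$ on $L^2(W^{\sharp},h^2 d\mu)$. By Lemma \ref{lem:Assumption 1 for e_h}, $\e_h$ satisfies Assumption A relative to the symmetric Dirichlet form $(\e^{D,W,h^2},\F^h_W)$, with sector and zero-order constants controlled by $C_0(\e)$ and $C_8(\e)$.

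Next, by Theorem \ref{h-VDP0}, whose proof in the case of an $\e^{\mbox{\tiny s}}$-profile uses only Proposition \ref{pro-hcontrol} applied to $\e^{\mbox{\tiny s}}$, the symmetric form $(\e^{D,W,h^2},\F^h_W)$ satisfies the volume doubling property and the Poincar\'e inequality up to scale $a_0 R$ on inner balls $B_{\widetilde U}(a,r)\subset W^{\sharp}$. Part (iv) of the parabolic Harnack theorem in Section 5 then yields the parabolic (and hence the elliptic) Harnack inequality for $\e_h$ up to the same scale, with the constants depending only on the quantities listed in the statement. Now given two positive local weak solutions $u,v$ of $L u = 0$ in $B_U(\xi,A_0 r)$ with Dirichlet boundary condition along $\partial U$, the ratios $u/h$ and $v/h$ are positive local weak solutions of $L_h w = 0$ in $B_{\widetilde U}(\xi,A_0 r)\subset W^{\sharp}$; their Dirichlet condition along $\partial U$ has been absorbed so that they are \emph{interior} solutions in $(W^{\sharp},d_U,h^2 d\mu)$ near $\xi$. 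Applying the elliptic Harnack inequality for $\e_h$ on the inner ball $B_{\widetilde U}(\xi,r)$ to each ratio separately gives
\[
\frac{u(x)/h(x)}{u(x')/h(x')}\le A_1',\qquad
\frac{v(x')/h(x')}{v(x)/h(x)}\le A_1'
\]
for all $x,x'\in B_U(\xi,r)$; multiplying the two yields the desired inequality with $A_1=(A_1')^2$. Part (i) then follows from (ii) by a routine covering argument using Remark \ref{rem-R>} and the local nature of the assumed doubling/Poincar\'e hypotheses.

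The main obstacle is the rigorous identification of $u/h$ and $v/h$ as local weak solutions of the $L_h$-equation in the \emph{extended} metric space $(W^{\sharp},d_U,h^2 d\mu)$, i.e.\ in the intrinsic geometry associated to $\e^{D,W,h^2}$. This requires Lemma \ref{lem:A1 for d_U} (to identify $d_{\e^{D,W,h^2}}$ with $d_U$ on the relevant balls), Lemma \ref{prop:5.7 h unbd} and Proposition \ref{prop:5.8 h} (to match function spaces and establish regularity of the transformed form), and Lemma \ref{lem:2.46} (to convert the Dirichlet boundary condition along $\partial U$ into membership in $\F^h_{\mbox{\tiny loc}}$ of a neighborhood of $\xi$ in $W^{\sharp}$). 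This is exactly why the $h$-transform formalism of Section \ref{sec:h-transform} was developed, and once those tools are in place the argument above goes through verbatim.
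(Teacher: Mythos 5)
Your proposal is correct and follows essentially the same route as the paper: Theorem \ref{th-BHP2} is obtained there as a byproduct of stage 2 of the proof of Theorem \ref{h-VDP0}, by transforming with an $\e^{\mbox{\tiny s}}$-$(U,W)$-profile $h$, invoking Lemma \ref{lem:Assumption 1 for e_h} and the doubling/Poincar\'e properties of $(\e^{D,W,h^2},\F^h_W)$ from stage 1 to get the parabolic Harnack inequality for $\e_h$ on inner balls (via \cite[Theorem 2.13]{LierlSC2}), and then applying that Harnack inequality to the ratios $u/h$ and $v/h$, exactly as you describe.
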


\section{Estimates for the Dirichlet heat kernel} \label{sec:phi}

Let $(\e^{\mbox{\tiny{s}}},\F)$ be a model form as in Section \ref{sec-geo} and assume that it satisfies (A1)-(A2).
Let $\e$ be a form satisfying Assumption A.
Fix a domain $U$ and consider the bilinear form $(\e^D_U,\F^0(U))$
of Definition \ref{def2:e^D_U}.
In this section, we derive the main results of this paper which are
two-sided estimates  for the  kernel of the semigroup  
$P^{D}_{U,t}$ associated with $(\e^D_U,\F^0(U))$, that is, the heat kernel 
for $\e$ with Dirichlet boundary condition along $\partial U$.
For simplicity, let us assume that the volume doubling condition 
and the Poincar\'e inequality hold locally in $U$. This immediately implies 
that the semigroup $P^{D}_{U,t}$ admits a continuous positive kernel 
$p^{D}_U(t,x,y)$ in $U$, so that
$$P^{D}_{U,t}f(x)=\int p^{D}_U(t,x,y)f(y)dy.$$
In fact, by virtue of the local Harnack inequality, 
this kernel is locally H\"older continuous in 
$(t,x,y)\in (0,\infty)\times U\times U$.

Next, let $h$ be a positive continuous function in $U$ and consider the form
$\big(\e^{D,U}_h, H^{-1}(\F^0(U)\big)$, where the map $H:f\mapsto hf$ is the natural 
isometry between $L^2(U,h^2d\mu)$ and $L^2(U,d\mu)$ as in Definition 
\ref{defEh}. By construction, the form $\e_h^{D,U}$ induces a semigroup 
$P^{D,U}_{h,t}: L^2(U,h^2d\mu)\rightarrow L^2(U,h^2d\mu)$ given by
$$P^{D,U}_{h,t}f= h^{-1} P^D_{U,t}(hf).$$
Hence, this semigroup admits a kernel $p^{D,U}_h(t,x,y)$, 
$(t,x,y)\in (0,\infty)\times U\times U$ and we have
\begin{equation}\label{Doobph}
p^D_U(t,x,y)=  h(x)h(y)p^{D,U}_h(t,x,y).
\end{equation}
Clearly, to obtain good estimates for $p^D_U$, it suffices 
to obtain good estimates for $p^{D,U}_h$.

\subsection{Local Dirichlet heat kernel estimates}

In this subsection, we explain how to implement the strategy outlined above
to obtain heat kernel estimates for $p^D_U(t,x,y)$ at
two fixed points $x$ and  $y$ in $U$ (these points may, in some sense, 
be close to the boundary).  

We fix $R_x,R_y>0$ and make the following two basic assumptions:
\begin{enumerate}
\item  The volume doubling property and the Poincar\'e inequality hold
up to scale $R_x$ in $B(x,R)$ and up to scale $R_y$ in $B(y,R_y)$.

\item The domain $U$ is locally $(c_u,C_u)$-inner uniform up to scale $R_x$
near $B_U(x,R)$ and up to scale $R_y$ near $B_U(y,R_y)$.

\end{enumerate}

Next, we pick a real $\gamma$ with the property that there exists a function
$h=h_\gamma$ such that $h$ is positive continuous in $U$ and is
 a $(U,B_U(x,R))$-profile and a $(U,B_U(y,R))$-profile for $\e+\gamma\langle\cdot,\cdot\rangle$.

The following lemma provides the existence of such a pair $(\gamma,h_\gamma)$.
\begin{lemma} Let $\e$ be a form satisfying {\em Assumption A}. 
Let $U$ be a domain in $X$. Set
$$\lambda_U=\inf\{ \e(f,f), f\in \F^0(U), \|f\|_2=1\}.$$ 
\begin{enumerate}
\item If $U$ is bounded then 
$-\lambda_U$ is an eigenvalue  for the infinitesimal generator of
$P^D_{U,t}$ and the associated  normalized $L^2$-eigenfunction $\phi=\phi_U\in \F^0(U)$ 
is positive in $U$.
\item If $U$ is unbounded and locally inner uniform, 
then there exists a function $h=h_U$ which is positive 
continuous in $U$, a local weak solution of $-Lh=\lambda_U h$ in $U$, and 
both a $(U,B_U(x,R_x))$-profile and a $(U,B_U(y,R_y))$-profile for $\e-\lambda_U\langle\cdot,\cdot\rangle$.
\end{enumerate}
\end{lemma}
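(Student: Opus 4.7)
The plan is to prove (i) by combining spectral theory of compact semigroups on the bounded domain $U$ with a Krein--Rutman argument, and to prove (ii) by exhausting $U$ by bounded inner uniform subdomains $U_n \nearrow U$, applying (i) to each $U_n$, and passing to a locally uniform limit of the normalized eigenfunctions. The main technical obstacle will not be the construction of the limit $h$ but the verification that it satisfies the profile condition along $\partial U$.

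For (i), I would first note that the hypotheses (volume doubling and Poincar\'e locally on $\overline U$, together with Assumption A) yield ultracontractivity of $P^D_{U,t}$ via the Nash--Moser machinery of \cite{LierlSC2}. Since $U$ is bounded and has finite $\mu$-measure, this makes $P^D_{U,t}$ Hilbert--Schmidt and hence compact on $L^2(U,\mu)$; the generator therefore has compact resolvent and purely discrete spectrum. The semigroup is positivity preserving (\cite{MaRock}) and irreducible by the interior Harnack inequality, so the Krein--Rutman theorem yields a simple principal eigenvalue with a positive eigenfunction $\phi \in \F^0(U)$, strictly positive on $U$ by interior Harnack. That the corresponding eigenvalue equals $-\lambda_U$ is immediate in the symmetric case (pairing $\phi$ with itself in the Rayleigh quotient); in the non-symmetric case it would follow from running the same construction for the adjoint form $\e^*$ (which satisfies Assumption A with the same constants and the same $\lambda_U$) and using a duality pairing $\langle \phi, \phi^* \rangle$ to obtain the reverse inequality.

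For (ii), I would take an exhaustion $U_n \nearrow U$ with $U_n$ relatively compact in $U$ and $B_U(x,R_x) \cup B_U(y, R_y) \subset U_1$, and apply (i) to each $U_n$ to get positive eigenfunctions $\phi_n \in \F^0(U_n) \subset \F^0(U)$ (extended by zero) with $L\phi_n = -\lambda_{U_n}\phi_n$ and $\lambda_{U_n} \downarrow \lambda_U$. I would normalize $\phi_n(x_0)=1$ at a base point $x_0 \in B_U(x, R_x)$. The local parabolic Harnack inequality applied to $\e + \lambda_{U_n}\langle\cdot,\cdot\rangle$, uniform in $n$ since the $\lambda_{U_n}$ stay bounded, produces uniform two-sided interior bounds and uniform H\"older continuity on every compact $K \subset U$; Arzel\`a--Ascoli and a diagonal extraction then yield a subsequence converging locally uniformly on $U$ to a positive continuous $h$ which solves $Lh = -\lambda_U h$ weakly.

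The hardest step is to verify that $h$ is a profile, i.e.\ $h \in \F^0_{\mbox{\tiny loc}}(U, B_U(x,R_x))$ (and the analogue near $y$). I would fix a set $W$ relatively compact in $\overline U$ with $W \subset B_U(x, R_x)$ and $d_U(W, U \setminus B_U(x, R_x)) > 0$, and choose a Lipschitz cut-off $\chi$ with compact support in $B_U(x, R_x)^\sharp$ and $\chi \equiv 1$ on a neighborhood of $W$. Since each $\phi_n \in \F^0(U)$, Lemma \ref{lem:2.46} yields $\chi\phi_n \in \F^0(U)$. Testing the eigenvalue equation against $\phi_n\chi^2$ and applying the Caccioppoli estimate of Lemma \ref{caccio} will bound $\int\chi^2 d\Gamma(\phi_n,\phi_n)$ by $\int\phi_n^2 d\Gamma(\chi,\chi) + \int\phi_n^2\chi^2 d\mu$ plus lower-order terms, both of which are uniformly controlled via the local $L^\infty$-bound on $\phi_n$ coming from the Harnack inequality. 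Hence $\{\chi\phi_n\}$ is bounded in $\F^0(U)$; extracting a weakly convergent subsequence will give a limit $\xi \in \F^0(U)$ that must agree with $\chi h$ almost everywhere by the local uniform convergence $\phi_n \to h$, so $\chi h \in \F^0(U)$. Varying $W$ and repeating the argument near $y$ completes the proof.
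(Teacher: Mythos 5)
Your overall architecture coincides with the paper's. For (i) the paper simply invokes Jentzsch's theorem; your route through ultracontractivity, Hilbert--Schmidt compactness and Krein--Rutman is the standard unpacking of the same idea. For (ii) the paper also exhausts $U$ by $U_n$, normalizes $h_n=\phi_{U_n}/\phi_{U_n}(o)$ at an interior point, and uses the uniform local Harnack inequalities and equicontinuity to extract a positive continuous limit solving $-Lh=\lambda_U h$. The one place where you genuinely diverge is the verification that the limit is a $(U,B_U(x,R_x))$-profile: the paper gets this by noting that the $h_n$ satisfy the geometric boundary Harnack principle (Theorem \ref{th-BHP2}) uniformly in $n$ and passing that boundary control to the limit, whereas you propose a Caccioppoli estimate giving uniform $\F^0(U)$-bounds on $\chi\phi_n$ followed by weak compactness. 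Your route is more explicit about why the limit lands in $\F^0_{\mbox{\tiny loc}}(U,B_U(x,R_x))$, which the paper leaves to a citation.

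However, your Caccioppoli step has a gap as written. Lemma \ref{caccio} bounds $\int\chi^2\,d\Gamma(\phi_n,\phi_n)$ by a constant times $\int_{\mathrm{supp}\,\chi\cap U}\phi_n^2\,d\mu$, and with the normalization $\phi_n(x_0)=1$ you have no a priori control of this quantity: $\mathrm{supp}\,\chi$ is compact in $B_U(x,R_x)^{\sharp}$, hence meets $\partial U$, and the \emph{interior} Harnack inequality only yields $\sup_K\phi_n\le C(K)$ for $K$ compact in $U$, with $C(K)$ blowing up as $K$ approaches the boundary. What is needed is a Carleson-type estimate $\sup_{B_U(\xi,r)}\phi_n\le K_1\,\phi_n(\xi_r)$ holding uniformly in $n$ up to $\partial U$ --- and that is precisely Proposition \ref{pro-hcontrol}, whose proof is the boundary Harnack principle (in the form of Theorem \ref{th-BHP2}, since $\e$ need not be a Dirichlet form here). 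So your argument does not actually avoid the paper's key input; once you add that estimate, the Caccioppoli plus weak-compactness step does close, but ``the local $L^\infty$-bound coming from the Harnack inequality'' is not a valid justification on its own.

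A smaller remark on (i): identifying the Jentzsch/Krein--Rutman eigenvalue of $-L$ with the variational quantity $\lambda_U=\inf\{\e(f,f)\}$ is immediate only in the symmetric case; your duality sketch (pairing $\phi$ with the adjoint eigenfunction $\phi^*$) shows the two principal eigenvalues of $L$ and $L^*$ agree but does not by itself yield the reverse inequality against the form bottom. The paper is equally terse on this point, so you are not falling short of its proof, but the duality argument should not be presented as if it settles the identification.
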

\begin{proof} Part one follows easily from Jentzsch's Theorem 
(see, e.g., \cite[Theorem V.6.6]{Schaefer}. 

For part two, we consider a relatively compact increasing exhaustion $U_n$ of 
$U$ such that $B_U(x,R_x)$ and $B_U(y,R_y)$ are contained in $U_1$.
For each $U_n$, we have an eigenfunction $\phi_{U_n}$ with eigenvalue 
$\lambda_{U_n}$ in $U_n$ given by (i). Fix a point $o\in U_1$ and consider 
the sequence $h_n=\phi_{U_n}/\phi_{U_n}(o).$ From the definitions and
\cite{LierlSC2}, it easily follows 
that these functions all satisfy local Harnack inequalities 
(with constants independent of $n$) in their domains and are 
equicontinuous. This implies that some subsequence of $(h_n)$ converges in $U$ 
to a function $h\in \F_{\mbox{\tiny loc}}(U)$ which is positive and a local 
weak solution of $-L h=\lambda_U h$
in $U$. In addition, by Theorem \ref{th-BHP2}, the functions 
$h_n$ satisfy the geometric
boundary Harnack principle locally in $B_U(x,R_x)$ and $B_U(y,R_y)$, 
uniformly in $n$. This easily implies that  the limit $h$ is a 
$(U,B_U(x,R_x))$-profile and a $(U,B_U(y,R_y))$-profile for $\e-\lambda_U\langle\cdot,\cdot\rangle$.  See 
\cite[Section 4.3.2]{GyryaSC}.
\end{proof}
\begin{remark} 
\begin{enumerate}\item 
Recall that Assumption A implies that there exists a non-negative
real $\alpha$ such that $\e(f,f)\ge -\alpha\|f\|_2^2$ for all  $f\in \F$,  and that 
$\alpha$ is bounded above in terms of $C_2(\e)$ and $C_3(\e)$. Hence, 
$\lambda_U \geq - \alpha$. 
\item It is not hard 
to modify the proof of (ii) to 
show that for each $\gamma\le \lambda_U$ there exists
a function $h_\gamma$ which is positive 
continuous in $U$, a local weak solution of $-Lh=\gamma h$ in $U$, and 
both a $(U,B_U(x,R_x))$-profile 
and a $(U,B_U(y,R_y))$-profile for $\e-\gamma\langle\cdot,\cdot\rangle$.
\end{enumerate}
\end{remark}

\begin{theorem}\label{th-HK1}
Let $\e$ be a form satisfying \emph{Assumption A}.
Let $U$ be a domain in $X$. Fix $x,y\in U$, $T>0$ and $\gamma$ and assume that
\begin{enumerate}
\item  The volume doubling property and the Poincar\'e inequality hold
up to scale $R_x$ in $B(x,R_x)$ and up to scale $R_y$ in $B(y,R_y)$.
\item $U$ is locally $(c_u,C_u)$-inner uniform up to scale $R_x$
near $B_U(x,R_x)$ and up to scale $R_y$ near $B_U(y,R_y)$.
\item There exists a function
$h=h_\gamma$ such that $h$ is positive continuous in $U$ and
 both a $(U,B_U(x,R_x))$-profile and a $(U,B_U(y,R_y))$-profile for $\e+\gamma\langle\cdot,\cdot\rangle$.
\end{enumerate}
Then for all $t\in (0,T)$, $\xi\in B_{\widetilde{U}}(x,a_0R_x)$, $\zeta\in 
B_{\widetilde{U}}(y,a_0R_y)$, we have 
$$p^D_U(t,\xi,\zeta)\le \frac{A_1h(\xi)h(\zeta)\exp(-a_1 d_U(\xi,\zeta)^2/t)}
{\sqrt{V(\xi,r_x)V(\zeta,r_y)}h(\xi_{r_x})h(\zeta_{r_y})},$$
where $r_z=\min\{\sqrt{t},a_0R_z\}$ for  $z=x,y$, and where
$z_r\in U$ denotes a point such that $d_U(z,z_r)=r/4$ and 
$d(z_r,\partial U)\ge c_u r/8$, for $z = \xi,\zeta$ and $r=r_x, r_y$. Further,
for all $t\in (0,T)$, $\xi \in B_{\widetilde{U}}(x,a_0R_x)$, $\zeta\in B_{\widetilde{U}}(y,a_0R_y)$, we have 
$$p^D_U(t,\xi,\zeta)\ge \frac{a_2 h(\xi)h(\zeta)\exp(-A_2 d_U(\xi,\zeta)^2/t)}
{\sqrt{V(\xi,r_x)V(\zeta,r_y)}h(\xi_{r_x})h(\zeta_{r_y})}.$$
The constant $a_0$ depends only on $c_u,C_u$.
The constant $a_1$ depends only on
$C_0(\e)$--$C_5(\e)$. The constants $A_1,A_2,a_2$ depend only on $c_u,C_u$, 
the volume doubling and Poincar\'e constants up to scale $R_x$ (resp. $R_y$)
in  $B(x,R_x)$ (resp. $B(y,R_y)$), $C_0(\e)$-$C_5(\e)$
and upper bounds on $|\gamma|$  and $(C_8(\e)+|\gamma|)R_x^2$,  
$(C_8(\e)+|\gamma|)R_y^2$, $TR_x^{-2}$ and $TR_y^{-2}$.  
\end{theorem}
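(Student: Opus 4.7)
The plan is to apply the Doob $h$-transform from Section \ref{sec:h-transform}, reducing heat kernel estimates for the non-symmetric Dirichlet form $\e^D_U$ to Gaussian estimates for an auxiliary form on the weighted space $L^2(U, h^2\, d\mu)$ that satisfies the parabolic Harnack inequality locally. Setting $\hat\e := \e + \gamma\langle\cdot,\cdot\rangle$, the Dirichlet semigroup kernel of $\hat\e$ satisfies $\hat p^D_U(t,\xi,\zeta) = e^{-\gamma t} p^D_U(t,\xi,\zeta)$. Since $h = h_\gamma$ is a $(U, B_U(x, R_x))$- and $(U, B_U(y, R_y))$-profile for $\hat\e$, the identity (\ref{Doobph}) applied to $\hat\e$ with profile $h$ gives
\[
p^D_U(t,\xi,\zeta) \;=\; e^{\gamma t}\, h(\xi)\, h(\zeta)\, p^{\hat\e_h}(t,\xi,\zeta),
\]
where $p^{\hat\e_h}$ is the kernel of the semigroup associated to $\hat\e_h$ on $L^2(U, h^2\, d\mu)$. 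The prefactor $e^{\gamma t}$ is bounded between universal constants because $|\gamma| T \le (C_8(\e) + |\gamma|) R_x^2 \cdot (T R_x^{-2})$ is controlled by hypothesis, so it only affects the final constants.

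Next I would transfer the local geometric hypotheses at $x$ and $y$ into analytic estimates for $\hat\e_h$. Applying Theorem \ref{h-VDP0} with $W = B_U(x, R_x)$, the symmetric strictly local form $(\e^{D,W,h^2}, \F^h_W)$ satisfies VDP and the Poincar\'e inequality with respect to $h^2 \, d\mu$ on inner balls $B_{\widetilde U}(a, r)$ with $B_{\widetilde U}(a, A_0 r) \subset W^{\sharp}$ and $r < a_0 R_x$, together with the volume comparison $V_{h^2}(a, r) \asymp h(a_r)^2 V(a, r)$. Lemma \ref{lem:A1 for d_U} identifies its intrinsic distance with $d_U$ on such balls, Theorem \ref{h-VDP} upgrades this to (PHI) for $\hat\e_h$ on the same balls, and Lemma \ref{lem:Assumption 1 for e_h} ensures $\hat\e_h$ itself satisfies Assumption A with the stated constant bounds. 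The analogous statements hold with $W = B_U(y, R_y)$.

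With VDP, PI and PHI secured locally for the Assumption-A form $\hat\e_h$ on $(U, h^2\, d\mu, d_U)$, the two-sided Gaussian estimates for $p^{\hat\e_h}$ follow from the theory of \cite{LierlSC2}, which extends the Nash--Aronson--Moser--Grigor'yan--Saloff-Coste template to non-symmetric forms. The Davies-type upper bound gives
\[
p^{\hat\e_h}(t,\xi,\zeta) \;\le\; \frac{C}{\sqrt{V_{h^2}(\xi, r_x)\, V_{h^2}(\zeta, r_y)}} \exp\!\bigl(-c\, d_U(\xi,\zeta)^2 / t\bigr), \qquad r_z := \min\{\sqrt t,\, a_0 R_z\},
\]
and the matching lower bound comes from a near-diagonal lower bound (via (PHI)) combined with chaining along a $(c_u, C_u)$-inner uniform curve joining $\xi$ to $\zeta$. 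Inserting these into the Doob identity and rewriting the denominator via $V_{h^2}(z, r) \asymp h(z_r)^2 V(z, r)$ delivers the announced bounds on $p^D_U$; a routine tracking of constants through Theorems \ref{h-VDP0}--\ref{h-VDP} and Lemma \ref{lem:Assumption 1 for e_h} yields the stated dependencies.

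The main obstacle is the chaining step in the lower bound: PHI for $\hat\e_h$ is available only on the local pieces $B_U(x, R_x)^{\sharp}$ and $B_U(y, R_y)^{\sharp}$, so one must verify that an inner uniform curve from $\xi$ to $\zeta$ can be covered by a chain of good balls on which (PHI) applies with uniform constants, with step size comparable to $r_x \wedge r_y$. This is where the truncations $r_z = \min\{\sqrt t,\, a_0 R_z\}$ and the dependence of the constants on $T R_x^{-2}, T R_y^{-2}$ enter the statement: the number of chaining steps along the geodesic is essentially $d_U(\xi,\zeta)^2 / t$ on balls of radius $\sqrt t$ when $t \le a_0^2 R_z^2$, and one needs to ensure the total loss from iterating PHI is absorbed in the target Gaussian exponent $A_2 d_U(\xi,\zeta)^2 / t$.
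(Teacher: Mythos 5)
Your proposal follows essentially the same route as the paper's proof: reduce to the $h$-transformed kernel via (\ref{Doobph}), obtain the parabolic Harnack inequality for the transformed form near $x$ and $y$ from Theorem \ref{h-VDP} (built on Theorem \ref{h-VDP0}, Lemma \ref{lem:Assumption 1 for e_h} and Lemma \ref{lem:A1 for d_U}), and then invoke the classical PHI-to-Gaussian machinery of \cite{SC02,SturmII,SturmIII}; the only cosmetic difference is that you transform $\e+\gamma\langle\cdot,\cdot\rangle$ rather than $\e$ itself, which merely contributes the controlled factor $e^{\gamma t}$. The chaining difficulty you flag in the lower bound is genuine, but it is equally present in (and left implicit by) the paper's own two-sentence proof, so it is not a defect of your argument relative to the paper's.
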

\begin{proof} By (\ref{Doobph}) these bounds can be deduced from
similar heat kernel bounds for $p^{D,U}_{h}$. The desired bounds for $p^{D,U}_h$ 
follow from classical arguments 
(e.g., \cite[Chapter 5]{SC02} and \cite{SturmII,SturmIII})  
based on the validity of 
the parabolic Harnack inequality in 
$B_{\widetilde{U}}(x,a_0R_x)$ and $B_{\widetilde{U}}(y,a_0R_y)$ which follows 
from Theorem \ref{h-VDP}. 
\end{proof}
\begin{remark} Theorem \ref{th-HK1} holds true if we replace (iii) 
by the assumption that $h$  is positive continuous in $U$ and both
a $(U,B_U(x,R_x))$-profile and a $(U,B_U(y,R_y))$-profile for 
$\e^{\mbox{\tiny s}}+\gamma\langle\cdot,\cdot\rangle$. 
\end{remark}

\subsection{Dirichlet heat kernel estimates in unbounded domains}

In this section we prove  two-sided 
Dirichlet heat kernel estimates in an unbounded 
domain $U$ under various hypotheses. The technique of the proof is 
the same as in the previous section. 
The form $\e$ is a form as in Assumption A. Our minimal assumption 
on the unbounded domain $U$ is that the volume doubling property and 
Poincar\'e inequality hold locally on $\overline{U}$ and that $U$ is locally 
inner uniform.  By \cite[4.3.2]{GyryaSC},
these minimal hypotheses imply the existence of a 
$\e^{\mbox{\tiny s}}$-harmonic profile $h$ in $U$. Since $\e^{\mbox{\tiny s}}$
is our model form, it is natural to think of the 
$\e^{\mbox{\tiny s}}$-profile $h$ as a fundamental object. 
Hence, the heat kernel 
estimates given in this section are stated in terms of $h$.

The first theorem of this section provides heat kernel bounds 
under these minimal hypotheses.
In the second theorem, these minimal hypotheses are upgraded to  
hypotheses that hold uniformly up to scale $R$ for some fixed $R>0$. 

\begin{theorem}\label{th-HK2}
Let $\e$ be a form satisfying \emph{Assumption A}.
Let $U$ be a locally inner uniform unbounded domain in $X$. Assume that
the volume doubling property and the Poincar\'e inequality hold
locally on $\overline{U}$. Let 
$h$ be a $\e^{\mbox{\em\tiny s}}$-harmonic profile in $U$ 
(extended as a $\F$-quasi-continuous function on $\widetilde{U}$).
Then the Dirichlet heat kernel $p^D_U$ has the following properties:
\begin{enumerate}
\item
The function $(t,x,y)\mapsto p^D_U(t,x,y)$ is continuous  
on $(0,\infty)\times U\times U$ and the function 
$$(t,x,y)\mapsto \frac{p^D_U(t,x,y)}{h(x)h(y)}$$ is locally H\"older continuous
in 
$(0,\infty)\times\widetilde{U}\times \widetilde{U}$.
\item There exist $ c,C>0$ such that for any pair of points 
$x,y\in \widetilde{U}$, 
there exist  $r_x,r_y, A=A(x,y)\in (0,\infty)$ 
such that for any $t>0$ and $\xi\in B_{\widetilde{U}}(x,r_x)$, 
$\zeta\in B_{\widetilde{U}}(y,r_y)$, we have
$$p^D_U(t,\xi,\zeta)\le \frac{A h(\xi)h(\zeta) \exp(- cd_U(\xi,\zeta)^2/t
+C t)}{\sqrt{V(\xi,\sqrt{t_x})V(\zeta,\sqrt{t_y})} h(\xi_{\sqrt{t_x}})h(\zeta_{\sqrt{t_y}})},$$
where $t_z=\min\{ t,r_z^2\}$ for $z = x,y$, and where $z_r$ is a point in $U$ at 
distance at most $r/4$ from $z$ and at distance at least 
$c_u r/8$ from $\partial U$ for $z = \xi,\zeta$.    
\end{enumerate}
\end{theorem}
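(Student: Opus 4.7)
The overall strategy is to apply the Doob-type transform relation
\[ p^D_U(t,\xi,\zeta) = h(\xi)\,h(\zeta)\,p^{D,U}_h(t,\xi,\zeta) \]
and reduce both statements to properties of $p^{D,U}_h$, the kernel of the semigroup generated by the transformed form $(\e^{D,U}_h, H^{-1}(\F^0(U)))$. Since $h$ is a $\e^{\mbox{\tiny s}}$-harmonic profile, Lemma \ref{lem:Assumption 1 for e_h} (in the $\e^{\mbox{\tiny s}}$-profile case, taking $W=U$ locally) shows that $\e_h$ satisfies Assumption A with respect to the symmetric strictly local regular Dirichlet form $(\e^{D,U,h^2},\F^h)$, with controlled constants. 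Combined with Theorem \ref{h-VDP}, we obtain that the parabolic Harnack inequality for $\e_h$ holds in any inner ball $B_{\widetilde{U}}(a,r)$ with $B_{\widetilde{U}}(a,A_0 r)\subset U^{\sharp}$ and $r$ not too large (how large depends on the local scales of doubling/Poincar\'e and local inner uniformity around $a$).

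For part (i), the plan is as follows. For each point $z\in \widetilde{U}$, the local inner uniformity and local doubling/Poincar\'e hypotheses furnish a scale $r_z>0$ so that Theorem \ref{h-VDP} applies on all inner balls $B_{\widetilde{U}}(a,r)$ contained in a small inner neighborhood of $z$. The PHI for $\e_h$ on such balls implies, by the classical Moser-type argument as in \cite{SturmII,SturmIII}, that $p^{D,U}_h$ is locally H\"older continuous on $(0,\infty)\times U^{\sharp}\times U^{\sharp}$, where we use that Lemma \ref{lem:A1 for d_U} identifies the intrinsic metric of $\e^{D,U,h^2}$ with $d_U$ on the relevant balls, so inner ball neighborhoods of boundary points in $\widetilde{U}$ are honest interior balls for the transformed structure. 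Continuity of $(t,\xi,\zeta)\mapsto p^D_U(t,\xi,\zeta)/(h(\xi)h(\zeta))$ on $(0,\infty)\times \widetilde{U}\times \widetilde{U}$ then follows directly, and multiplying by the continuous function $h(\xi)h(\zeta)$ gives continuity of $p^D_U$ on $(0,\infty)\times U\times U$.

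For part (ii), fix $x,y\in\widetilde{U}$. Choose $R_x, R_y>0$ small enough that the local volume doubling, local Poincar\'e inequality, and local $(c_u,C_u)$-inner uniformity all hold up to these scales in suitable neighborhoods of $x$ and $y$; such scales exist by the local hypotheses together with Remark \ref{rem-R>}. We then apply Theorem \ref{th-HK1} together with its post-Remark (which permits $h$ to be a $\e^{\mbox{\tiny s}}$-profile rather than a profile for $\e+\gamma\langle\cdot,\cdot\rangle$) on $B_U(x,R_x)$ and $B_U(y,R_y)$. Setting $r_x, r_y$ proportional to $a_0 R_x, a_0 R_y$, the truncation $t_z=\min\{t,r_z^2\}$ handles the short-time regime; for the long-time regime $t>r_z^2$, the extra factor $e^{Ct}$ is produced by the lower bound $\e(f,f)\ge -\alpha\|f\|_2^2$ from Assumption A, which gives $\|P^{D}_{U,t}\|_{L^2\to L^2}\le e^{\alpha t}$, combined with the standard $L^2\to L^\infty$ off-diagonal chaining (along a geodesic in $(\widetilde{U},d_U)$) that turns the local short-time bound into a global Gaussian bound at the cost of the exponential-in-$t$ prefactor. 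The constant $A=A(x,y)$ absorbs the local doubling/Poincar\'e constants near $x$ and $y$ and the $h$-ratio normalizations coming from Proposition \ref{pro-hcontrol}.

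The main obstacle will be the passage from purely local PHI/doubling information, which yields constants that genuinely depend on $x$ and $y$, to the estimate of part (ii). In particular, care is needed so that the Gaussian upper bound obtained locally near $x$ and near $y$ can be patched by a chaining argument into a global bound with explicit dependence only on $h$, on the volumes $V(\cdot,\sqrt{t_\cdot})$ computed with respect to $\mu$, and on the additional $e^{Ct}$ factor. Getting the correct normalization by $h(\xi_{\sqrt{t_x}})h(\zeta_{\sqrt{t_y}})$ requires Proposition \ref{pro-hcontrol} applied at scale $\sqrt{t_x}, \sqrt{t_y}$, which is exactly why $t_z$ is truncated by $r_z^2$.
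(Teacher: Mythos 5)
Your proposal is correct and follows essentially the same route as the paper: reduce to the transformed kernel $p^{D,U}_h$ via the Doob-transform identity (\ref{Doobph}), invoke Theorem \ref{h-VDP} to get the parabolic Harnack inequality for $\e_h$ on inner balls, and then apply the classical Harnack-based arguments (local H\"older continuity for part (i), localized Gaussian bounds plus $L^2$-growth and chaining for part (ii)). The paper's own proof is only a brief outline citing these same ingredients, so your additional detail on the long-time regime and on part (i) is a faithful elaboration rather than a different approach.
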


\begin{theorem}\label{th-HK3}
Let $\e$ be a form satisfying \emph{Assumption A}.
Let $U$ be an unbounded domain in $X$ that is locally 
$(c_u,C_u)$-inner uniform up to scale 
$R$. Assume that
the volume doubling property and the Poincar\'e inequality hold
up to scale $R$  on $\overline{U}$. Let 
$h$ be a $\e^{\mbox{\em\tiny s}}$-harmonic profile in $U$ 
(extended as a $\F$-quasi-continuous function on $\widetilde{U}$).
Then the Dirichlet heat kernel $p^D_U$ has the following properties:
\begin{enumerate}
\item
The function $(t,x,y)\mapsto p^D_U(t,x,y)$ is continuous  
on $(0,\infty)\times U\times U$ and there exist $\kappa, a_0\in (0,1)$ 
and a constant $A_1$ such that for any $r\in (0,a_0R)$, $t,t'\in (0, \infty)$, 
$x,x',y,y'\in \widetilde{U}$ satisfying $t\ge r^2$, $|t-t'|\le r^2/4$, 
$d_U(x,x')\le r$, $d(y,y')\le r$, we have 
$$\left|\frac{p^D_U(t,x,y)}{h(x)h(y)}-
\frac{p^D_U(t',x',y')}{h(x')h(y')}\right|\le A_1  \left(\frac{\rho}{r}
\right)^\kappa \frac{p^D_U(t+r^2,x_r,y_r)}{h(x_r)h(y_r)}.
$$ 
where $\rho=\sqrt{|t-t'|}+
d_U(x,x')+d_U(y,y')$ and $z_r$ is a point
in $U$ at 
distance at most $r/4$ from $z$ and at distance at least 
$c_u r/8$ from $\partial U$ for $z = x,y$.    

\item There exist $ c,C, a_0,A_1, a_2A_2>0$ such that for any pair of points 
$x,y\in \widetilde{U}$ and any $t>0$, we have
$$p^D_U(t,x,y)\le \frac{A_1 h(x)h(y) \exp(- cd_U(x,y)^2/t
+C t)}{\sqrt{V(x,\sqrt{\tau})V(y,\sqrt{\tau})} h(x_{\sqrt{\tau}})
h(y_{\sqrt{\tau}})}
$$ 
and
$$p^D_U(t,x,y)\ge \frac{a_2 h(x)h(y) \exp(- A_2 d_U(x,y)^2/t
-A_2 t)}{\sqrt{V(x,\sqrt{\tau})V(y,\sqrt{\tau})} h(x_{\sqrt{\tau}})
h(y_{\sqrt{\tau}})}
$$ 
where $\tau=\min\{ t, (a_0R)^2\}$.    
\end{enumerate}
The constant $a_0$ depends only on $c_u,C_u$.
The constants $c,C$ depend only on $C_2(\e)$--$C_5(\e)$.
The constants $\kappa, A_1,a_2,A_2$ depend only on $c_u,C_u$, 
the volume doubling and Poincar\'e constant on $\overline{U}$ up to scale $R$,
$C_0(\e)$--$C_5(\e)$,
and on an upper bound on $C_8(\e)R^2$.
\end{theorem}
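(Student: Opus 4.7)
The plan is to apply the Doob $h$-transform strategy that underlies the proof of Theorem~\ref{th-HK1}, but now globalized across the unbounded domain~$U$. Since $h$ is an $\e^{\mbox{\tiny s}}$-harmonic profile on $U$, we write
$$ p^D_U(t,x,y) = h(x)\, h(y)\, p^{D,U}_h(t,x,y), $$
where $p^{D,U}_h$ is the kernel of the semigroup associated with the transformed form $(\e^{D,U}_h, D(\e^{D,U}_h))$ on $L^2(U, h^2 d\mu)$. All the work is carried out for $p^{D,U}_h$ and transported back through this identity, together with the comparison $V_{h^2}(z,r) \asymp h(z_r)^2 V(z,r)$ provided by the unbounded-domain version of Proposition~\ref{pro-hcontrol}.

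To set up the analysis of $p^{D,U}_h$, I would first invoke Lemma~\ref{lem:Assumption 1 for e_h} (with $W=U$ and in its $\e^{\mbox{\tiny s}}$-profile case) to see that $\e_h$ satisfies Assumption~A over the symmetric strictly local regular base form $(\e^{D,U,h^2},\F^h_U)$, with the structural constants $C_0(\e_h),\dots,C_5(\e_h)$ controlled in terms of $C_0(\e),\dots,C_5(\e)$. Theorem~\ref{h-VDP0} then yields, for $(\e^{D,U,h^2},\F^h_U)$, volume doubling and the Poincar\'e inequality on every inner ball $B_{\widetilde U}(a,r)$ with $r\in(0,a_0 R)$ (taking $W^{\sharp}=\widetilde U$), with $a_0,A_0$ depending only on $c_u,C_u$. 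Theorem~\ref{h-VDP} then upgrades this to the parabolic Harnack inequality for the non-symmetric form $\e_h$ on the same family of inner balls, with quantitative dependence on the parameters listed in the theorem.

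Once PHI, VD and PI for $\e_h$ on $(\widetilde U, d_U, h^2 d\mu)$ are established up to scale $a_0 R$, the classical Nash--Aronson--Moser--Grigor'yan--Saloff-Coste--Sturm machinery (see \cite[Chapter~5]{SC02} and \cite{SturmII,SturmIII}) delivers two-sided Gaussian bounds for $p^{D,U}_h$ in the short-time regime $t \le (a_0 R)^2$ of the expected form
$$ p^{D,U}_h(t,x,y) \asymp \frac{1}{\sqrt{V_{h^2}(x,\sqrt t)\,V_{h^2}(y,\sqrt t)}}\, \exp\!\left(\mp c\, \frac{d_U(x,y)^2}{t}\right). $$
Substituting $V_{h^2}(z,\sqrt t) \asymp h(z_{\sqrt t})^2 V(z,\sqrt t)$ and multiplying by $h(x)h(y)$ recovers the claimed short-time bounds for $p^D_U$, with $\tau=t$. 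Part~(i) is an immediate by-product: the standard parabolic oscillation estimate associated with PHI for $\e_h$ on an inner ball of radius $A_0 r$, applied to the local weak solution $(t,\xi,\zeta)\mapsto p^{D,U}_h(t,\xi,\zeta)$, supplies the H\"older ratio $(\rho/r)^{\kappa}$ against the reference value at the safe interior points $(x_r,y_r)$, and this is exactly what is stated in~(i) after multiplying back by $h(x)h(y)$.

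The remaining and hardest step is the long-time regime $t > (a_0 R)^2$, in which the theorem retains Gaussian decay governed by $\tau=(a_0 R)^2$ at the price of the extra factors $e^{Ct}$ and $e^{-A_2 t}$. Setting $\tau=(a_0 R)^2$ and writing $t = k\tau + s$, I would iterate the semigroup identity $P^D_{U,t} = (P^D_{U,\tau})^k P^D_{U,s}$: the upper bound follows by chaining short-time Gaussian estimates along a near-geodesic in $(\widetilde U, d_U)$ from $x$ to $y$ (available thanks to the $(c_u,C_u)$-inner uniformity up to scale $R$), so that the aggregated Gaussian factor reproduces $\exp(-c d_U(x,y)^2/t)$ while the accumulated prefactor becomes $\exp(Ct)$; the lower bound is obtained by the dual chaining and produces $\exp(-A_2 t)$. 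The main obstacle is precisely this balance, namely tracking the multiplicative $e^{\pm Ct}$ penalty (which encodes both the global lower bound $\e(f,f)\ge -\alpha\|f\|_2^2$ and the failure of VD/PI past scale $R$) while preserving the Gaussian space--time decay through the chain. One further subtlety is that $\e$ is not assumed to be symmetric or even a Dirichlet form, so the required boundary control on $h$ and on $p^D_U$ near $\partial U$ must be drawn from Theorem~\ref{th-BHP2} rather than Theorem~\ref{th-BHP1}.
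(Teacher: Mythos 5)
Your proposal follows essentially the same route as the paper's (very terse) proof outline: the Doob $h$-transform identity (\ref{Doobph}), the parabolic Harnack inequality for $\e_h$ up to scale $a_0R$ supplied by Theorem \ref{h-VDP} (built on Lemma \ref{lem:Assumption 1 for e_h} and Theorem \ref{h-VDP0}), the comparison $V_{h^2}(z,r)\simeq h(z_r)^2V(z,r)$, and the classical Harnack-to-Gaussian machinery of \cite[Chapter 5]{SC02} and \cite{SturmII,SturmIII}, including the chaining argument that produces the $e^{\pm Ct}$ factors in the long-time regime. Your filled-in details (H\"older estimate for part (i), the short-time/long-time split at scale $a_0R$) are exactly what the paper's citation to the classical arguments is meant to cover, so the approaches coincide.
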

\begin{proof}[Proof of Theorems \ref{th-HK2}--\ref{th-HK3} (outline)] The proofs 
of the two theorems stated above follow well established lines of reasoning.
The first (and crucial) step is to use (\ref{Doobph}) and estimate the kernel 
$p^{D,U}_h$. Indeed, by Theorem \ref{h-VDP}, the associated form $\e_h$
satisfies a parabolic Harnack inequality. The desired bounds for $p^{D,U}_h$ 
follow from classical arguments 
(e.g., \cite[Chapter 5]{SC02} and \cite{SturmII,SturmIII})  
based on the validity of the parabolic Harnack inequality.
\end{proof}

\begin{remark}In statement {\em (ii)} of Theorem \ref{th-HK3}, 
the  denominators can be replaced by
$$V(x,\sqrt{\tau})[h(x_{\sqrt{\tau}})]^2.$$ Note the lack of $x,y$ 
symmetry of the resulting bounds. This is often useful in practice.
\end{remark}
The following corollary of the Harnack inequality for $\e_h$ 
up to scale $a_0R$ in $\widetilde{U}$ is also of interest. We note that if 
$u\in \F^0_{\mbox{\tiny loc}}(U,(0,\infty)\times U)$
is a local weak solution of the heat equation for $\e$ in $U$, then
$u/h \in \F^{h,0}_{\mbox{\tiny loc}}(U,(0,\infty)\times U)$ 
is a local weak solution of the heat equation for $\e_h$ in $U$.
Hence $u/h$ satisfies the Harnack inequality up to scale $a_0R$ in 
$\widetilde{U}$. This and the argument given in \cite[Section 5.4.3]{SC02}
yield the following result.

\begin{theorem} \label{th-globH}
Let $\e$ be a form satisfying \emph{Assumption A}.
Let $U$ be an unbounded domain in $X$ that is locally 
$(c_u,C_u)$-inner uniform up to scale 
$R$. Assume that
the volume doubling property and the Poincar\'e inequality hold
up to scale $R$  on $\overline{U}$. Let 
$h$ be a $\e^{\mbox{\em\tiny s}}$-harmonic profile in $U$ 
(extended as a $\F$-quasi-continuous function on $\widetilde{U}$).
Let $u$ be a positive local weak solution of the heat equation 
for $\e$ in $U$
with Dirichlet boundary condition along $\partial U$.
Then there exists a constant $A_1$ such that for all $0<s<t<\infty$ and 
$x,y\in \widetilde{U}$, we have
$$\frac{u(s,x)}{u(t,y)}\le A_1 \frac{h(x)}{h(y)}\exp\left( A_1\left( 1+ \frac{t-s}{s}+\frac{t-s}{R^2}+ \frac{d_U(x,y)^2}{t-s}\right)\right).$$
The constant $A_1$ depends only on $c_u,C_u$, 
the volume doubling and Poincar\'e constant on $\overline{U}$ up to scale $R$,
$C_0(\e)$--$C_5(\e)$,
and on an upper bound on $C_8(\e)R^2$.
\end{theorem}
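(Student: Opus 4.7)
The strategy is the one indicated just before the statement: reduce to an ordinary parabolic Harnack chaining argument by passing from $u$ to its $h$-transform $v = u/h$, for which the relevant form $\e_h$ satisfies (PHI) up to scale $a_0R$ on all of $\widetilde U$.

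First I would justify the transformation. Take $W=U$, so $W^{\sharp}=\widetilde U$, and let $\e_h$ be the form on $L^2(\widetilde U,h^2d\mu)$ from Definition~\ref{defEh}. A direct computation (using that $\e(h,h\varphi)=0$ weakly in $U$ for test functions $\varphi$ vanishing near the inner boundary, since $h$ is an $\e^{\mbox{\tiny s}}$-harmonic profile and $\e-\e^{\mbox{\tiny s}}$ is first order plus zero order) shows that $v=u/h$ lies in $\F^{h,0}_{\mbox{\tiny loc}}(U,(0,\infty)\times U)$ and is a positive local weak solution of $\partial_t v = L_h v$ on $(0,\infty)\times \widetilde U$, where $L_h$ is the generator of $\e_h$. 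Theorem~\ref{h-VDP} (applied with $W=U$, in which case the condition $B_{\widetilde U}(a,A_0r)\subset W^{\sharp}$ becomes automatic) yields constants $a_0,A_0,H$ depending only on the quantities listed in the statement such that $v$ satisfies the parabolic Harnack inequality on every inner cylinder $(\tau-r^2,\tau)\times B_{\widetilde U}(a,r)$ with $0<r<a_0R$.

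Next I would invoke the standard Moser-type chaining argument, exactly in the form carried out in \cite[Section~5.4.3]{SC02}. Set $r=\min\{\sqrt{(t-s)/N},\,a_0R/2\}$ for a large integer $N$ to be chosen, and connect $(s,x)$ to $(t,y)$ by a sequence of $N$ space-time pairs $(s_i,z_i)$ such that each consecutive pair lies in a space-time cylinder in which (PHI) for $v$ applies. By the definition of the inner distance $d_U$ and the local length-uniformity near $W=U$ (Remark after Lemma~\ref{lem:x_r}), the spatial steps can be chosen of length at most $r$ and the chain can be taken of total length $\simeq d_U(x,y)+\sqrt{t-s}$, so that the number of Harnack steps is at most
\[
N\lesssim 1+\frac{d_U(x,y)}{r}+\frac{\sqrt{t-s}}{r}.
\]
Each step costs a multiplicative factor $H$, so $v(s,x)\le H^N v(t,y)$. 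Optimizing $r$ yields the desired factor
\[
\exp\!\left(A_1\Bigl(1+\tfrac{t-s}{s}+\tfrac{t-s}{R^2}+\tfrac{d_U(x,y)^2}{t-s}\Bigr)\right),
\]
where the term $1+(t-s)/s$ arises from the time-direction chaining (one cannot step backwards by more than a fraction of the current time $s$), and the term $(t-s)/R^2$ accounts for the truncation $r\le a_0R/2$, which forces $\Omega\bigl((t-s)/R^2\bigr)$ additional time-steps when $t-s>R^2$. Translating back via $u=hv$ gives the asserted bound, with $h(x)/h(y)$ appearing from the quotient $v(s,x)/v(t,y)=[u(s,x)/u(t,y)]\cdot[h(y)/h(x)]$.

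The main obstacle is the careful chaining: one needs to pick the chain of balls $(B_{\widetilde U}(z_i,r))$ so that each two consecutive points lie in a common cylinder on which (PHI) applies, ensuring in particular that no spatial ball leaves $\widetilde U$ and that each radius stays below $a_0R$. For this step it is crucial that (PHI) for $\e_h$ holds at \emph{every} location in $\widetilde U$ at all scales below $a_0R$ (Theorem~\ref{h-VDP}), together with the inner-length-uniformity of $U$ up to scale $R$, which guarantees that any two points of $\widetilde U$ can be joined by a curve in $U$ whose intermediate points stay sufficiently away from $\partial U$ so that balls of radius $r\lesssim \min\{\sqrt{t-s},R\}$ around them lie in $\widetilde U$. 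The remaining bookkeeping, which optimizes $N$ and collects the three exponential contributions, is exactly as in \cite[Thm.~5.4.12]{SC02}.
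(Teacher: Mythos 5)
Your proposal is correct and follows exactly the route the paper takes: the paper's (very terse) proof consists precisely of observing that $u/h$ is a positive local weak solution for $\e_h$, that $\e_h$ satisfies the parabolic Harnack inequality up to scale $a_0R$ in $\widetilde U$ by Theorem \ref{h-VDP}, and then invoking the chaining argument of \cite[Section 5.4.3]{SC02}. Your write-up merely fills in more of the bookkeeping (the origin of the three terms in the exponent) than the paper bothers to record.
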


\subsection{Dirichlet heat kernel estimates in bounded domains}\label{sec-conv}
This section focuses on estimates in bounded inner uniform domains and 
relates these results to refined intrinsic ultracontractivity estimates.

Very generally, consider a positivity preserving strongly continuous 
semigroup $P_t$ acting on $L^2(U,\mu)$, where $U$ is a bounded domain, 
with continuous kernel $p(t,x,y)$ 
such that $p(t,x,y)$ is bounded for each $t>0$. 
Its adjoint $P^*_t$ (with kernel $p^*(t,x,y)=p(t,y,x)$) 
has the same properties. Let $\lambda_U$ be the common bottom of the 
$L^2$-spectrum of $-L$ and $-L^*$ where $L$ and $L^*$ are the respective 
infinitesimal generators. Let $\phi$ and $\phi_*$ be the 
associated positive continuous $L^2$-normalized
eigenfunctions. Following \cite{KimSong},
we say that the pair $(P_t,P^*_t)$ is intrinsically ultracontractive
if for each $t>0$ there exists a constant $c(t)$ such that
\begin{equation} \label{ultract}
p(t,x,y)\le c(t) \phi(x)\phi_*(y).
\end{equation}
For selfadjoint semigroups, intrinsic ultracontractivity was 
introduced in \cite{DavSim}. Note that if $\lambda_\psi$ 
is an eigenvalue for $P_t$ 
with $L^2$-normalized eigenfunction $\psi$ then (\ref{ultract}) implies
\begin{equation} \label{eig}
 |\psi|\le e c(1/|\lambda_\psi|)^{1/2}
\phi  \end{equation}
In many interesting cases, these bounds hold with 
$c_t= c(1+ t^{-\nu/2})e^{-t\lambda_U}$ for some $\nu>0$. Typically, 
in the literature, $U$ is a domain in $\mathbb R^n$ and 
$P_t$ is the semigroup associated with an elliptic  
second order differential operator (e.g., the Laplacian) 
with Dirichlet boundary condition along 
the boundary of $U$. Intrinsic ultracontractivity is then viewed as 
a property that depends on the regularity of the boundary of $U$. 
See, e.g.,  \cite{Ban,BanDav}. In particular, it follows from \cite{Ban}
that the heat semigroup with Dirichlet boundary condition 
in any bounded  inner uniform domain $U\subset \mathbb R^n$ is 
intrinsically ultracontractive with $c_t=c(1+t^{-\nu/2})e^{-\lambda_U t}$ 
for some $c=c(U), \nu=\nu(U)$. Here, we obtain the following refined results.

\begin{theorem}\label{th-bounded1}
Let $\e$ be a form satisfying \emph{Assumption A}.
Let $U$ be a bounded domain in $X$ that is  locally
$(c_u,C_u)$-inner uniform up to scale $R$. Assume that
the volume doubling property and the Poincar\'e inequality hold
up to scale $R$ on $\overline{U}$. Let 
$$\lambda=\lambda_U=\min\{\e(f,f): f\in \F^0(U), \|f\|_2=1\},$$
and  let $\phi=\phi_U$ be the associated positive $L^2$-normalized eigenfunction
(of minus the infinitesimal generator 
with Dirichlet boundary condition along $\partial U$).
Then, for all $ t\in (0,R^2)$, $x,y\in \widetilde{U}$, 
the Dirichlet heat kernel $p^D_U$ satisfies
\begin{equation}\label{phi1}
p^D_U(t,x,y)\le  \frac{A_1
\phi(x)\phi(y)e^{  -cd_U(x,y)^2/t}}{\sqrt{V(x,\sqrt{t})V(y,\sqrt{t})}
\phi(x_{\sqrt{t}})\phi(y_{\sqrt{t}})}
\end{equation}
and
\begin{equation}\label{phi2}
p^D_U(t,x,y)\ge  \frac{a_2
\phi(x)\phi(y)e^{ -A_2d_U(x,y)^2/t}}{\sqrt{V(x,\sqrt{t})V(y,\sqrt{t})}
\phi(x_{\sqrt{t}})\phi(y_{\sqrt{t}})}
\end{equation}
Further, for $t>R^2$, we have
\begin{equation} \label{phi3}
a_3
\le
\frac{e^{\lambda t}
p^D_U(t,x,y)}{\phi(x)\phi(y)} \le A_3 
\end{equation}
The constant $c$ depends only on $C_0(\e)$--$C_5(\e)$.
The constants $A_1,a_2,A_2, a_3,A_3\in (0,\infty)$ 
depend only on $c_u,C_u$, the volume doubling and Poincar\'e constants on 
$\overline{U}$ up to scale $R$,
$C_0(\e)$--$C_5(\e)$, and on  upper bounds on $(C_8(\e)+|\lambda|)R^2$ and 
$\mathrm{diam}_U/R$.
\end{theorem}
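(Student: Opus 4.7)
The plan is to apply the $h$-transform machinery from Section \ref{sec:h-transform} with $W=U$, $h=\phi$ and $\gamma=-\lambda$. Since $\phi$ is the positive ground state from the Lemma, it is a local weak solution of $-L\phi=\lambda\phi$ in $U$ and lies in $\F^0(U)$, so it qualifies as a $(U,U)$-profile for the shifted form $\hat{\e}:=\e-\lambda\langle\cdot,\cdot\rangle$. By Lemma \ref{lem:Assumption 1 for e_h}, the transformed form $\hat{\e}_\phi$ on $L^2(\widetilde{U},\phi^2 d\mu)$ satisfies Assumption A, and by Theorem \ref{h-VDP} its symmetric strictly local part enjoys the parabolic Harnack inequality up to scale $a_0 R$ on every inner ball. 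Writing $\tilde p$ for the kernel of the associated semigroup on $L^2(\widetilde{U},\phi^2 d\mu)$, the Doob transform identity reads
\begin{equation*}
p^D_U(t,x,y) = e^{-\lambda t}\,\phi(x)\phi(y)\,\tilde p(t,x,y).
\end{equation*}

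For the small-time regime $t\in(0,R^2)$, I would invoke the standard Gaussian-type two-sided estimate that follows from volume doubling, Poincar\'e inequality and PHI for $\hat{\e}_\phi$ (as in \cite{SC02,SturmII,SturmIII}) to obtain
\begin{equation*}
\tilde p(t,x,y) \simeq \frac{\exp(-c\,d_U(x,y)^2/t)}{\sqrt{V_{\phi^2}(x,\sqrt t)\,V_{\phi^2}(y,\sqrt t)}} \quad\text{for } t\in(0,(a_0R)^2),
\end{equation*}
and combine this with the boundary volume control $V_{\phi^2}(x,r)\simeq \phi(x_r)^2 V(x,r)$ from the Bounded domains proposition. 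Reinserting into the Doob identity and absorbing the prefactor $e^{-\lambda t}\in[e^{-|\lambda|R^2},e^{|\lambda|R^2}]$ into the constants using the assumed upper bound on $|\lambda|R^2$ yields (\ref{phi1}) and (\ref{phi2}) for $t\in(0,R^2)$.

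For the long-time regime $t>R^2$, the key point is that $\hat{\e}_\phi$ induces a Markov semigroup on the probability space $(\widetilde{U},\phi^2 d\mu)$: since $P^D_{U,t}\phi=e^{-\lambda t}\phi$, one has $\tilde P_t 1=1$, and $\int\phi^2 d\mu=1$ by normalization. A chaining argument that iterates the parabolic Harnack inequality over a covering of $\widetilde{U}$ by a controlled number of inner balls of radius $\sim a_0 R$ --- the number being bounded in terms of $\mathrm{diam}_U/R$ via doubling and local inner uniformity --- shows that $\tilde p(t,x,y)\simeq\tilde p(t+R^2,x_0,y_0)$ uniformly in $x,y\in\widetilde{U}$ for $t\geq(a_0R)^2$. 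Combining with the mass identity $\int_U\tilde p(t,x,y)\phi^2(y)\,d\mu(y)=1$ pins $\tilde p$ to constants and yields (\ref{phi3}) after reinserting the Doob factor. The main obstacle is this chaining step: since PHI is only available up to scale $a_0 R$ while $\mathrm{diam}_U$ may greatly exceed $R$, one must carefully track how multiplicative constants compound through the chain of overlapping balls and ensure the resulting constants depend only on the quantities listed in the theorem.
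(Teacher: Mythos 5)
Your proposal is correct and follows essentially the same route the paper intends: the paper gives no separate proof of Theorem \ref{th-bounded1}, but its strategy throughout Section \ref{sec:phi} is exactly your Doob transform with $h=\phi$ (a $(U,U)$-profile for $\e-\lambda\langle\cdot,\cdot\rangle$), the parabolic Harnack inequality for $\e_\phi$ up to scale $a_0R$ from Theorem \ref{h-VDP}, the volume comparison $V_{\phi^2}(x,r)\simeq\phi(x_r)^2V(x,r)$, and, for \eqref{phi3}, conservativeness of the transformed semigroup combined with Harnack chaining over $O(\mathrm{diam}_U/R)$ inner balls. The only point worth adding is that the same chaining along inner-uniform curves is also needed for the Gaussian \emph{lower} bound \eqref{phi2} when $d_U(x,y)^2\gg t$, not only in the long-time regime; this is covered by the same "classical arguments" the paper cites.
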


\begin{corollary}\label{cor-bounded1}
Referring to the notation and setting of 
{\em Theorem \ref{th-bounded1}}, there exist a bounded continuous 
function $w$ on $U$ and a real $\omega>0$ such that
\begin{equation}\label{cor-phi}
\forall\,t\ge R^2, \;\;x,y\in U,\;\;\;\left|\frac{e^{\lambda t}
p^D_U(t,x,y)}{\phi(x)\phi(y)w(y)} -1\right|\le A_4 e^{-\omega t}.\end{equation}
Further,  $a_3\le w\le A_3$, 
$$A_4\le\frac{A_3}{a_3(1-a_3/A_3)^{2}} \;\mbox{ and }\;\;
\omega \ge \frac{1}{R^2}\log\left(\frac{1}{1-a_3/A_3}\right)$$ where $a_3,A_3 $ and $R$ are 
as in {\em Theorem \ref{th-bounded1}}.
\end{corollary}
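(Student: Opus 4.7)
The plan is to recognize that through the Doob transform by $\phi$, the Dirichlet semigroup $P^D_{U,t}$ becomes an honest Markov semigroup to which Theorem~\ref{th-bounded1}(\ref{phi3}) supplies a Doeblin-type minorization. Concretely, set $d\pi:=\phi^2\,d\mu$ (a probability measure on $U$ since $\|\phi\|_2=1$) and
\[q(t,x,y):=\frac{e^{\lambda t}p^D_U(t,x,y)}{\phi(x)\phi(y)}.\]
Because $P^D_{U,t}\phi=e^{-\lambda t}\phi$, the operator $P^\phi_t f(x):=\int q(t,x,y)f(y)\,d\pi(y)$ satisfies $P^\phi_t 1=1$, and $q$ is its transition density with respect to $\pi$. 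Theorem~\ref{th-bounded1}(\ref{phi3}) reads $a_3\le q(t,x,y)\le A_3$ for all $t\ge R^2$ and $x,y\in U$, which is exactly the Doeblin minorization $P^\phi_{R^2}(x,dy)\ge a_3\,d\pi(y)$.

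By the classical Doeblin theorem, $P^\phi_t$ admits a unique invariant probability measure $\nu$, and the invariance identity at time $R^2$ forces $\nu=w\,d\pi$ with $w(y)=\int q(R^2,x,y)\,d\nu(x)$. The two-sided bound on $q$ yields $a_3\le w\le A_3$, and joint continuity of $q$ (a consequence of the H\"older continuity of $p^D_U$, itself a standard by-product of the parabolic Harnack inequality for $\e_h$ supplied by Theorem~\ref{h-VDP}) combined with dominated convergence gives continuity of $w$ on $U$.

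The crucial sharpening is to rewrite the minorization with respect to $\nu$ rather than $\pi$: since $w\le A_3$,
\[P^\phi_{R^2}(x,dy)\ge a_3\,d\pi(y)=\frac{a_3}{w(y)}\,d\nu(y)\ge \frac{a_3}{A_3}\,d\nu(y).\]
The standard coupling argument for Doeblin chains then gives, for any bounded $g$,
\[\mathrm{osc}\bigl(P^\phi_{nR^2}g\bigr)\le \bigl(1-a_3/A_3\bigr)^n\mathrm{osc}(g),\qquad \bigl|P^\phi_{nR^2}g(x)-\nu(g)\bigr|\le \bigl(1-a_3/A_3\bigr)^n\mathrm{osc}(g).\]
Applied to $g(\cdot):=q(R^2,\cdot,y)$, whose oscillation is at most $A_3-a_3$ and which satisfies $\nu(g)=w(y)$ by the invariance of $\nu$, this yields
\[|q(t,x,y)-w(y)|\le (A_3-a_3)\bigl(1-a_3/A_3\bigr)^{\lfloor t/R^2\rfloor-1}\qquad (t\ge R^2).\]
Dividing by $w(y)\ge a_3$, rewriting the geometric factor as $e^{-\omega t}$ with $\omega=R^{-2}\log(1/(1-a_3/A_3))$, and absorbing the discretisation constant $(1-a_3/A_3)^{-1}$ into the prefactor, one arrives at (\ref{cor-phi}) with a leading constant bounded by $A_3/[a_3(1-a_3/A_3)^2]$.

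The main obstacle to keep track of is that the sharper rate $1-a_3/A_3$, rather than the naive $1-a_3$, can only be extracted by minorizing against $\nu$; this in turn requires transferring the two-sided bounds from $q$ to the invariant density $w$, so that the \emph{full} strength of (\ref{phi3})---not just its lower half---is essential. Once that is in place, the remaining steps are Chapman--Kolmogorov and standard Markov-chain coupling, with no further analytic input beyond what Theorem~\ref{th-bounded1} already provides.
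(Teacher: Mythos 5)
Your proof is correct and follows essentially the same route as the paper: the Doob transform by $\phi$ turns (\ref{phi3}) into a Doeblin minorization $K_{R^2}(x,y)\ge (a_3/A_3)\,w(y)$ against the invariant density $w$ (which the paper identifies as the positive eigenfunction of the adjoint with eigenvalue $1$), and geometric convergence follows; the paper merely phrases the contraction via the operator norm of $Q^n(I-w)$ with $Q=(1-\epsilon)^{-1}(K_{R^2}-\epsilon w)$ rather than via oscillation/coupling. One small remark: your closing claim is backwards --- since $\int q(t,x,y)\,d\pi(y)=1$ forces $A_3\ge 1$, minorizing against $\pi$ with constant $a_3$ gives the contraction factor $1-a_3\le 1-a_3/A_3$, so the $\nu$-minorization is not needed for (and does not improve) the rate; either version yields the stated lower bound on $\omega$.
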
 
\begin{proof}
By definition, the semigroup 
$K_t=e^{\lambda t}P^{D,U}_{\phi,t}$ with kernel $$
K_t(x,y)=\frac{e^{\lambda t}
p^D_U(t,x,y)}{\phi(x)\phi(y)}$$ with respect to $\phi^2d\mu$
is positivity preserving and satisfies
$K_t\mathbf 1_U=\mathbf 1_U$.
It follows that its adjoint $K^*_t$ on $L^2(U,\phi^2d\mu)$ 
admits a positive continuous eigenfunction 
$w$ with eigenvalue $1$. We normalize $w$ by setting $\int w\phi^2d\mu=1$.
Obviously, 
$w\phi^2d\mu$ is then an invariant probability measure for $K_t$ 
and it follows from
(\ref{phi3}) that $w$ is bounded and bounded away from $0$.

In the following computation, we think of  $K_t$ and $w$ as Markov operators, 
namely, 
$$f\mapsto K_tf=\int K_t(\cdot,y) f(y) \phi(y)^2d\mu(y),\;\;
f\mapsto wf=\int fw \phi^2d\mu,$$ 
acting on $L^p(U,w\phi^2d\mu)$. Note that (\ref{phi3}) implies
$a_3\le w\le A_3$. Hence
there exists a constant $\epsilon=a_3/A_3>0$ such that 
$K_{R^2}(x,y)\ge \epsilon w(y).$   It follows that 
$Q(x,y)=(1-\epsilon)^{-1}(K_{R^2}(x,y)-\epsilon w(y))$ is a Markov kernel on $U$
with respect to $\phi^2d\mu$ and we again denote by $Q$ the associated 
operator acting on $L^p(U,w\phi^2d\mu)$. Since $w\phi^2d\mu$ is an 
invariant probability measure for $Q$, we have  $ (Q-w)^n= Q^n(I-w)$.
Note also that, since $Q-w= (1-\epsilon)^{-1}(K_{R^2}-w)$,
$$\sup_{x,y}\{|Q^n(x,y)/w(y)-1|\}=\|Q^n(I-w)\|_{1\rightarrow  \infty}$$
where the right-hand side is the norm of the operator $Q^n(I-w)=Q^{n-1}(Q-w)$
from $L^1(U,w\phi^2d\mu)$ to $L^\infty(U,w\phi^2d\mu)$. We have
$$\|Q-w\|_{1\rightarrow \infty}\le \epsilon^{-1}(1-\epsilon)^{-1} \;\mbox{ and }\;\; 
\|Q^{n-1}\|_{1\rightarrow 1}\le 1.$$
Hence, we obtain 
$$\sup_{x,y}\{|Q^n(x,y)/w(y)-1|\}\le \epsilon^{-1}(1-\epsilon)^{-1}. $$
Since $Q^n(I-w) = (Q-w)^n= (1-\epsilon)^{-n}(K_{nR^2}-w)$, this gives
$$ 
\sup_{x,y}\{|K_{nR^2}(x,y)/w(y)-1|\}
\le \epsilon^{-1}(1-\epsilon)^{n-1}.$$
Since $t\mapsto \sup_{x,y}\{|K_{t}(x,y)/w(y)-1|\}$ is non-increasing in $t$,
we obtain
$$ 
\sup_{x,y}\{|K_{t}(x,y)/w(y)-1|\}
\le \epsilon^{-1}(1-\epsilon)^{-2} e^{-\omega t},
\;\;\omega= -R^{-2}\log (1-\epsilon).$$
This is exactly the desired inequality.
\end{proof}
\begin{remark} Let $\phi_*$ be the positive eigenfunction associated with 
the bottom eigenvalue $\lambda$ for the adjoint $-L^*$
of the infinitesimal generator $-L$ of $P^{D}_{U,t}$. From the definitions 
of $\phi,\phi_*,w$,
we  deduce that   
$\phi_*= w\phi$
so that we can rewrite (\ref{cor-phi}) as 
\begin{equation}\label{cor-phi*}
\forall\,t\ge R^2, \;\;x,y\in U,\;\;\;\left|\frac{e^{\lambda t}
p^D_U(t,x,y)}{\phi(x)\phi_*(y)} -1\right|\le A_4 e^{-\omega t}.\end{equation}
Further, we have $c \phi\le \phi_*\le C\phi$ for some positive constants $c,C$.
\end{remark}

\begin{corollary} \label{cor-bounded2}
Referring to the notation and setting of 
{\em Theorem \ref{th-bounded1}}, there exists a constant $A_5$ such that,
if $\psi \neq \phi$ is an $L^2(U,\mu)$-normalized 
eigenfunction of $-L$ with eigenvalue $\lambda_\psi$ then
$\eta=\lambda_\psi-\lambda\ge 1/(A_5R^2)$ and
\begin{equation}\label{phi5}
\forall\,x\in U,\;\;|\psi(x)|
\le \frac{ A_5\,\phi(x)}{\sqrt{V(x,1/\sqrt{\eta})}\phi(x_{1/\sqrt{\eta}})}.
\end{equation}
The constant $A_5$
depends only on $c_u,C_u$, the volume doubling and Poincar\'e constants on 
$\overline{U}$ up to scale $R$,
$C_0(\e)$--$C_5(\e)$, and on  upper bounds on 
$(C_8(\e)+|\lambda|)R^2$ and 
$\mathrm{diam}_U/R$.
\end{corollary}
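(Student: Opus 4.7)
The strategy is to move to the Doob $\phi$-transform and combine Cauchy--Schwarz with the Gaussian upper bound for the transformed kernel. Set $\tilde\psi=\psi/\phi$. Since $P^D_{U,t}\psi=e^{-\lambda_\psi t}\psi$, the function $\tilde\psi$ is an eigenfunction of the conservative semigroup $Q_t:=e^{\lambda t}P^{D,U}_{\phi,t}$ with eigenvalue $e^{-\eta t}$, and $\|\tilde\psi\|_{L^2(U,\phi^2 d\mu)}=\|\psi\|_{L^2(U,\mu)}=1$. The proof then splits into: (a) establishing the spectral gap $\eta\ge 1/(A_5 R^2)$, and (b) the pointwise bound on $\tilde\psi$.

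For (a), I would invoke Corollary \ref{cor-bounded1} together with the non-self-adjoint orthogonality $\langle\phi^*,\psi\rangle_\mu = 0$, where $\phi^*=w\phi$ is the positive principal eigenfunction of $-L^*$. The orthogonality is immediate from $-\lambda_\psi\langle\phi^*,\psi\rangle=\langle\phi^*,L\psi\rangle=\langle L^*\phi^*,\psi\rangle=-\lambda\langle\phi^*,\psi\rangle$ combined with $\lambda_\psi\ne\lambda$ (which holds because $\psi$ is not proportional to $\phi$, the principal eigenvalue being simple via Jentzsch's theorem). Writing $\psi=e^{\lambda_\psi t}P^D_{U,t}\psi$ and inserting the asymptotic $e^{\lambda t}p^D_U(t,x,y)=\phi(x)\phi^*(y)(1+R_t(x,y))$ with $|R_t|\le A_4 e^{-\omega t}$, valid for $t\ge R^2$, yields $|\psi(x)|\le C e^{(\eta-\omega)t}\phi(x)$. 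Sending $t\to\infty$ forces $\eta\ge\omega\ge R^{-2}\log\bigl(A_3/(A_3-a_3)\bigr)$, which gives $\eta\ge 1/(A_5 R^2)$ for a suitable $A_5$.

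For (b), Theorem \ref{th-bounded1} and the identity $V_{\phi^2}(x,r)\simeq\phi(x_r)^2 V(x,r)$ (up to scale $R$, from the bounded-domain proposition of Section 5) rewrite the Gaussian upper bound for the transformed kernel $q_t(x,y)=e^{\lambda t}p^D_U(t,x,y)/(\phi(x)\phi(y))$ as
$$q_t(x,y)\;\le\;\frac{A'\exp\!\bigl(-c\,d_U(x,y)^2/t\bigr)}{\sqrt{V_{\phi^2}(x,\sqrt{t})\,V_{\phi^2}(y,\sqrt{t})}},\qquad 0<t<R^2.$$
A standard annular decomposition of $\int q_t(x,y)^2\phi(y)^2 d\mu(y)$ using the doubling of $V_{\phi^2}$ (Theorem \ref{h-VDP0}) yields $\|q_t(x,\cdot)\|_{L^2(U,\phi^2 d\mu)}^2\le C\,V_{\phi^2}(x,\sqrt{t})^{-1}$. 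Applying Cauchy--Schwarz in $L^2(\phi^2 d\mu)$ to $\tilde\psi(x)=e^{\eta t}\int q_t(x,y)\tilde\psi(y)\phi(y)^2 d\mu(y)$ and using $\|\tilde\psi\|_{L^2(\phi^2 d\mu)}=1$ produces $|\tilde\psi(x)|\le e^{\eta t}\sqrt{C/V_{\phi^2}(x,\sqrt{t})}$. The choice $t=\min(1/\eta,R^2)\asymp 1/\eta$, licensed by (a), makes $e^{\eta t}$ bounded and $\sqrt{t}\asymp 1/\sqrt{\eta}$; unpacking $\tilde\psi=\psi/\phi$ and invoking doubling of $V_{\phi^2}$ to interchange the scales $R$ and $1/\sqrt{\eta}$ in the regime $\eta<1/R^2$ yields the stated bound.

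The main obstacle is step (a): unlike (b), it cannot be reduced to local Gaussian or Harnack input and instead requires the fine ergodic-type convergence of Corollary \ref{cor-bounded1} together with the non-self-adjoint orthogonality relation $\langle\phi^*,\psi\rangle=0$. Everything else is routine: tracking constants through the Doob transform and summing the annular tail $\sum_k C_0^{k+1}\exp(-c\,2^{2k-1})<\infty$.
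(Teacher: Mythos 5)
Your proposal is correct and follows essentially the same route as the paper: the spectral gap $\eta\ge\omega$ is extracted from the exponential convergence in Corollary \ref{cor-bounded1} (you make explicit the orthogonality $\langle\phi^*,\psi\rangle=0$ that the paper leaves implicit), and the pointwise bound comes from Cauchy--Schwarz applied to $\psi/\phi=e^{\eta t}P^{D,U}_{\phi,t}(\psi/\phi)$ together with the on-diagonal $L^2$ estimate $\int|p^{D,U}_\phi(t,x,\cdot)|^2\phi^2d\mu\lesssim (V(x,\sqrt t)\,\phi(x_{\sqrt t})^2)^{-1}$ from (\ref{phi1}), followed by the choice $t\simeq 1/\eta$. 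The only differences are expository (your annular decomposition is just the standard way of deriving that $L^2$ bound, which the paper asserts directly).
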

\begin{proof}By hypothesis, we have
$P^{D}_{U,t}\psi=e^{-t\lambda_\psi}\psi$. Hence
$$ e^{\lambda t}P^{D,U}_{\phi,t}(\psi/\phi)= 
e^{(\lambda-\lambda_\psi)t}(\psi/\phi).$$ 
The previous corollary implies that 
$$\lambda_\psi-\lambda \ge \omega 
=1/(A_5R^2)$$ with $A^{-1}_5=\log(1-a_3/A_3)^{-1}$. Further, 
for any $x\in U$ and  $t\le R^2$, (\ref{phi1}) yields
$$\int |p^{D,U}_\phi(t,x,y)|^2\phi(y)^2d\mu(y)\le 
\frac{ A'_1}{V(x,\sqrt{t}) \phi(x_{\sqrt{t}})^2}$$
where $A'_1$ depends on the same constants 
as $A_1$ in Theorem \ref{th-bounded1}. Because $\int \psi^2\phi^2d\mu=1$, it  
follows that
\begin{eqnarray*}
e^{(\lambda-\lambda_\psi)t}\frac{|\psi(x)|}{\phi(x)}&=&
e^{\lambda t}\left|
\int p^{D,U}_\phi(t,x,y)\frac{\psi(y)}{\phi(y)} \phi(y)^2d\mu(y)\right|\\
&\le & 
e^{\lambda t}\left(\int |p^{D,U}_\phi(t,x,y)|^2\phi(y)^2d\mu(y)\right)^{1/2} 
\\
&\le & \frac{\sqrt{A'_1} e^{t\lambda} }{\sqrt{V(x,\sqrt{t})} 
\phi(x_{\sqrt{t}})}
\end{eqnarray*}
It now suffices to choose $t\simeq 1/(\lambda_\psi-\lambda)=1/\eta$  
(which is, indeed, of order at most $R^2$) to obtain 
$$|\psi(x)|\le \frac{\sqrt{A'_1} e^{|\lambda|R^2}\, \phi(x)}{\sqrt{V(x,1/\sqrt{\eta})}\, 
\phi(x_{1/\sqrt{\eta}})}.$$
\end{proof}

The following result provides a very useful comparison between the 
principal Dirichlet  eigenfunction $\phi$ associated to $\e$ in $U$
and the principal Dirichlet eigenfunction $\phi_{\mbox{\tiny s}}$ 
associated to $\e^{\mbox{\tiny s}}$ in $U$.   Recall that
$$\lambda=\lambda_U=\min\{\e(f,f): f\in \F^0(U), \|f\|_2=1\},$$
and set
$$\lambda_{\mbox{\tiny s}}=\lambda
_{{\mbox{\tiny s}},U}=\min\{\e^{\mbox{\tiny s}}(f,f): f\in \F^0(U), \|f\|_2=1\}.$$
Assumption A on the form $\e$ implies easily that there 
exists a constant $A$ such that
$$\frac{1}{2}\lambda_{\mbox{\tiny s}}-A \le \lambda\le 
\lambda_{\mbox{\tiny s}}+A.$$
Further, under the assumption of Theorem \ref{th-bounded1}, 
there exists a constant $A'$ such that  
$0\le \lambda_{\mbox{\tiny s}}\le A'/R^2$. Here $A'$ depends on $c_u,C_u$ and 
the doubling constant up to scale $R$ on $\overline{U}$.
\begin{theorem}
Referring to the notation and setting of 
{\em Theorem \ref{th-bounded1}}, there exists a constant $A_6$ such that
the principal Dirichlet  eigenfunction $\phi$ associated to $\e$ 
and the principal Dirichlet eigenfunction $\phi_{\mbox{\emph{\tiny{s}}}}$ 
associated to $\e^{\mbox{\emph{\tiny{s}}}}$ in $U$ satisfy
$$ A_6^{-1} \phi_{\mbox{\emph{\tiny{s}}}}\le \phi\le A_6 \phi_{\mbox{\emph{\tiny{s}}}}.$$
The constant $A_6$ depends only on $c_u,C_u$, 
the volume doubling and Poincar\'e constants on 
$\overline{U}$ up to scale $R$,
$C_0(\e)$-- $C_5(\e)$, and on  upper bounds on 
$C_8(\e)R^2$ and 
$\mathrm{diam}_U/R$.
\end{theorem} 
\begin{proof} Apply Theorem \ref{h-VDP}(i) 
with $h=\phi_{\mbox{\tiny{s}}}$
(hence $\gamma= \lambda_{\mbox{\tiny s}}$ and $|\gamma|R^2$ is bounded above by 
the constant $A'$ appearing just before the theorem). Now, 
$\phi/\phi_{\mbox{\tiny s}}$ is a harmonic function for the form 
$\e_{\phi_{\mbox{\tiny s }}} -\lambda\langle\cdot,\cdot\rangle$ and the 
corresponding Harnack inequality provided by Theorem \ref{h-VDP}(i) gives 
the desired result.
\end{proof}

\begin{remark} In Theorem \ref{th-bounded1}, Corollary \ref{cor-bounded1} 
and Corollary \ref{cor-bounded2}, consider the special case when the 
volume doubling property and Poincar\'e inequality hold globally on 
$(X,(\e^{\mbox{\tiny s}},\F),d,\mu)$. Specialize further to the case when 
$\e=\e^{\mbox{\tiny s}}$. Assume that $U$ is a 
$(c_u,C_u)$-inner uniform domain in $(X,d)$. Then 
(\ref{phi1})-(\ref{phi2})-(\ref{phi3}) and (\ref{cor-phi})-(\ref{phi5})
hold true with $R=\mathrm{diam}_U$ and 
constants $A_1,a_2,A_2,a_3,A_3,A_4,A_5$ depending only on $c_u,C_u$.
\end{remark}

\def\cprime{$'$} \def\cprime{$'$}

\end{document}